\newcommand\red[1]{{\color{red}#1}}
\numberwithin{equation}{section}
\theoremstyle{plain}
\newtheorem{Thm}{Theorem}[section]
\newtheorem{Prop}[Thm]{Proposition}
\newtheorem{Lem}[Thm]{Lemma}
\newtheorem{Cc}[Thm]{Corollary}
\theoremstyle{definition}
\newtheorem{Def}[Thm]{Definition}
\newtheorem{Rem}[Thm]{Remark}
\newtheorem{Rk}[Thm]{Remark}
\newtheorem{assum}[Thm]{Assumption}
\newcommand{\rK}{\mathrm{K}}
\newcommand{\divv}{\mathrm{div }\;}
\newcommand{\w}{\mathbf{W}}
\newcommand{\bv}{\mathbf{v}}
\newcommand{\el}{\mathbf{L}}
\newcommand{\ve}{\mathbf{V}}
\newcommand{\h}{\mathbf{H}}
\newcommand{\mo}{\mathcal{O}}
\newcommand{\bu}{\mathbf{u}}
\newcommand{\eps}{\varepsilon}
\newcommand{\E}{\mathbb{E}}
\newcommand{\G}{\mathbf{G}_{\alpha}}
\newcommand{\rA}{\mathrm{A}}
\newcommand{\ydela}{\mathbf{y}^{\alpha, \delta}}
\newcommand{\zdela}{\mathbf{z}^{\alpha, \delta}}
\newcommand{\valph}{J_\alpha^{-1} \bu }
 \newcommand{\bp}{\mathbf{p}}
 \newcommand{\CO}{{{ \mathcal O }}}
\newcommand{\BF}{{{ \mathbb{F} }}}
\newcommand{\BP}{{{ \mathbb{P} }}}
 \newcommand{\lqq}{\lefteqn}
\def\XXint#1#2#3{{\setbox0=\hbox{$#1{#2#3}{\int}$ }
		\vcenter{\hbox{$#2#3$ }}\kern-.6\wd0}}
\begin{document}
	\title{Large, moderate deviations principle  and $\alpha$-limit  for the 2D Stochastic LANS-$\alpha$}
	\author{Z.I. Ali}
	 \address{Z.I. Ali, Department of Mathematical Sciences,
		University of South Africa,  Florida 0003, South Africa.}
	\email{alizi@unisa.ac.za}
	\author{P.A. Razafimandimby}
	\address{P. Razafimandimby, School of Mathematical Sciences,
		Dublin City University.}
	\email{paul.razafimandimby@dcu.ie}
	\author{T.A. Tegegn}
	\address{T.A. Tegegn, Department of Mathematics and Applied Mathematics,
		Sefako Makgatho Health Sciences University, South Africa.}
	\email{tesfalem.tegegn@smu.ac.za}
	\maketitle
	
\begin{abstract}
In this paper we consider the  Lagrangian Averaged Navier-Stokes Equations, also known as, LANS-$\alpha$ Navier-Stokes model on the two dimensional torus. We assume that  the  noise is a cylindrical Wiener process and its coefficient is multiplied by $\sqrt{\alpha}$. We then study through the lenses of the large and moderate deviations principle the behaviour of the trajectories of the solutions of the stochastic system as $\alpha$ goes to 0. Instead of giving two separate proofs of the two deviations principles we  present a unifying approach to the proof of the LDP and MDP  and express the rate function in term of the unique solution of the Navier-Stokes equations. Our proof is based on the weak convergence approach to large deviations principle. As a by-product of our analysis we also prove that the solutions of the stochastic LANS-$\alpha$ model converge in probability to the solutions of the deterministic Navier-Stokes equations.
\end{abstract}

	\textbf{Keywords:} LANS-$\alpha$ model, Camassa–Holm equations, large deviations principle, moderate deviations principle, Stochastic Navier-Stokes Equations
	
	\textbf{MSC(2020): }  35R60; 60F10; 76D05	 
	
	\vspace*{1cm}
	
	\hrule
	
	\vspace*{1cm}
	
	
	\pagestyle{fancy}
	\fancyhead{}\fancyfoot{}

	\lhead{Z. Ali , P. Razafimandimby \& T. Tegegn}
	\rhead{LDP results for the stochastic LANS-$\alpha$}
	\cfoot{\arabic{page}}
\section{Introduction}
The Navier-Stokes system is  the most used model in turbulence theory. 
In recent years, regularization models of Navier-Stokes equations such as the Navier-Stokes-$\alpha$, Leray-$\alpha$, modified Leray-$\alpha$, Clark-$\alpha$ to name a few,   were introduced as subgrid models scale of the Navier-Stokes equations (NSE), see, for instance, \cite{CLT}, \cite{CAO-TITI}, \cite{Chen-1}, \cite{Chen-2}, \cite{Chen-3}, \cite{FOIAS-HOLM-TITI}. Numerical analyses in 
\cite{CHMZ, GH, HT, LL-0, LKT, LKTT, MKSM, NS} seem to confirm that the previous examples of $\alpha$-models  can capture remarkably well the physical phenomenon of turbulence in fluid flows at a lower
computational cost. 

In this paper we will consider a stochastic version of  the Navier-Stokes-$\alpha$ (also known as the LANS-$\alpha$). 
More precisely we let $\CO=[0,2\pi]^2$ be the $2D$ torus, we fix an arbitrary time horizon $T\in(0,\infty)$ and we consider the following system
\begin{align} 
	\left\{ \begin{array}{ll}
		d\bv^\alpha + [-\Delta\bv^\alpha+\bu^\alpha\cdot\nabla\bv^\alpha + \sum_{j=1}^2\bv^\alpha_j\nabla\bu^\alpha+\nabla\bp^\alpha]dt = \alpha^\frac12G(\bu^\alpha)dW&\\
		\bv^\alpha = \bu^\alpha-\alpha^2\Delta\bu^\alpha&\\
		\divv \bu^\alpha = 0&\\
		\int_\CO\bu^\alpha(x)dx = 0&\\
		\bu^\alpha(t=0) = \xi,
	\end{array}\right.\label{Eq: LANS-ALPHA}
\end{align}
where $\bu^\alpha,\,\bp^\alpha$ are the fluid velocity and fluid pressure, respectively.
The symbol $W$ represents the cylindrical Wiener process evolving on a given separable Hilbert space $\mathrm{K}$.
The noise coefficient is a nonlinear map defined and taking values on Hilbert spaces that will be given later.
The symbol $\alpha$ is a small positive  parameter. 

 Observe that  when $\alpha=0$ the above system reduces to the deterministic 2D Navier-Stokes equations (NSEs):
 \begin{align} 
 \left\{ \begin{array}{ll}
 d\bu + [-\Delta\bu+\bu\cdot\nabla\bu + \nabla\bp ]dt = 0&\\
 \divv \bu = 0&\\
 \int_\CO\bu(x)dx = 0&\\
 \bu(t=0) = \xi,
 \end{array}\right.\label{Eq:NSE}
 \end{align}
  Thus, we expect that a sequence of solutions to the system \eqref{Eq: LANS-ALPHA} will converge in appropriate sense to a solution to \eqref{Eq:NSE} as $\alpha \to 0$. For the deterministic case, \textit{i.e.}, when $G\equiv 0$, it is known from \cite{FOIAS-HOLM-TITI} that then as $\alpha\to 0$ a weak solution to the deterministic  $3D$ LANS-$\alpha$ \eqref{Eq: LANS-ALPHA} model converges to $3D$ Navier-Stokes equations.
In \cite{CAO-TITI}, when $G\equiv 0$ the rate of convergence of the unique solution $2D$ \eqref{Eq: LANS-ALPHA} to the unique solution to the $2D$ Navier-Stokes equations was studied.
For the stochastic models, it was proved in \cite{CARABALLO1} that the stochastic $3D$ \eqref{Eq: LANS-ALPHA} has a unique strong solution when the noise coefficient $G$ is globally Lipschitz. 
When $G$ is only continuous, it was proved in \cite{Gabriel} that the stochastic $3D$ \eqref{Eq: LANS-ALPHA} has a global weak (or martingale) solutions. 
Furthermore, it is shown in \cite{GABU-SANGO} that when $\alpha\to 0$ a sequence of weak (or martingale) solutions of the stochastic $3D$ \eqref{Eq: LANS-ALPHA} model converges in distribution to a weak solution (or martingale) of the $3D$ Navier-Stokes equations. In the above references, the coefficient of the noise is not allowed to converges to 0 as $\alpha \to 0$. 

Our main goal in this paper is to study the behaviour of the solutions $\bu^\alpha$ to the system \eqref{Eq: LANS-ALPHA} as $\alpha \to 0$  through the lenses of the Large and Moderate Deviations Principle (LDP and MDP).   For this purpose we assume that the coefficient of the noise is multiplied by the square root of $\alpha$, \textit{i.e.}, of the form  $\alpha^\frac12G(\bu^\alpha)$. We then  analyse the asymptotic behaviour, as $\alpha \to 0$, of  the trajectories family of  $(\bu^\alpha)_{\alpha \in (0,1]}$ and $\left(\alpha^{-\frac12}\lambda^{-1}(\alpha)[\bu^\alpha-\bu]\right)_{\alpha \in (0,1]}$ where  $\lambda: (0,1] \to (0,\infty) $ is  a function satisfying 
	\begin{equation}\label{Eq:Def-Lambda}
	\lambda(\alpha) \rightarrow \infty \text{ and } \alpha^\frac12 \lambda(\alpha) \rightarrow 0 \text{ as } \alpha \rightarrow 0,
	\end{equation} 
	and $\bu$ is the solution to the deterministic NSE with initial data $\xi$. Thus, our goal and results in the present paper are different from the results in \cite{GABU-SANGO} and from results from several papers dealing with the deviation principles of $\alpha$-models of Navier-Stokes equations, see for instance \cite{Chueshov+Millet} and \cite{YANG-ZHAI}. 
	
	Roughly speaking, in the study of the MDP one is interested in probabilities of deviations of lower speed than in the classical LDP. In small diffusion (the coefficient of the noise is usually multiplied by $\alpha^\frac12$) the speed for the LDP is usually of order $\alpha$. The speed for the MDP is of order $\lambda^2(\alpha)$ and is provided by an LDP result for  $\left(\alpha^{-\frac12}\lambda^{-1}(\alpha)[\bu^\alpha-\bu]\right)_{\alpha \in (0,1]}$. 
Observe that since $\lambda(\alpha)$ converges to $\infty$ as slow as desired, then the MDP bridges the gap between the Central Limit Theorem and the LDP.	We refer, for instance, to \cite{Guillin} and \cite{Klebaner} for more detailed explanation and historical account of the MDP. We refer, for instance, to  \cite{Bessaih+Millet-2009}, \cite{Bessaih+Millet-2012}, \cite{ZB+BG+TJ-2017}, \cite{BuDu}, \cite{AB+PD+VM},  \cite{Chueshov+Millet},  \cite{Duan+Millet},  \cite{Lee+Leila}, \cite{Liu+Roeckner},  \cite{TZhang-NSE-MDP}, \cite{Wang-RDE_MDP}, \cite{Yang+Xueke}, \cite{TZhang-1}, \cite{Zhai-MDP} and references therein for a small sample of results from the extensive literature devoted to MDP and LDP for stochastic differential equations with small noise.

In several papers about LDP and MDP for stochastic system, the authors  usually present two separate proofs of the two deviations principles. In this present paper, instead of presenting two separate proofs of the LDP and MDP results we present a unifying approach for these deviation principles for the LANS-$\alpha$ model. A similar approach was introduced in \cite{PAUL} for the vanishing viscosity limit of the  second grade fluid. 
To be precise, we fix  $\delta\in \{0,1\}$ and consider the following problem   
\begin{align}
\left\{ 
\begin{array}{ll}
d\ydela + \left[\rA\ydela + \lambda_\delta(\alpha)\tilde{B}_\alpha(\ydela,\zdela) + \delta\left[\tilde{B}_\alpha(\bu,\zdela) +\tilde{B}_\alpha(\ydela,\valph)\right] \right] &\\
\qquad = -\lambda_\delta^{-1}(\alpha)\delta[\tilde{B}_\alpha(\bu,\valph)-B(\bu,\bu)]dt 
 + \alpha^\frac12\lambda_\delta^{-1}(\alpha) \G (\delta\bu+\lambda_\delta(\alpha)\ydela)dW,&\\
 \zdela = \ydela + \alpha^2\rA\ydela,&\\
\ydela(t=0) = (1-\delta)\xi, &
\end{array}
\right. \label{Eq: ABS-SVE-0}
\end{align}
where 
\begin{itemize}
	\item $\bu$ is the unique solution to the deterministic NSE with initial data $\xi$; 
	\item $\rA$ is the Stokes operator,  $J_\alpha =( I+\alpha^2\rA)^{-1};$ 
	\item $B(u,v)$ is roughly speaking the projection of $u\cdot \nabla v$ into the space of divergence free functions; 
	\item $\widetilde{B}(u,v)$ is the projection of $u\cdot \nabla v + \sum_{j=1}^2 v_j \nabla u_j $ into the space of divergence free functions;
	\item finally, $\widetilde{B}_\alpha(u,v)=J_\alpha \widetilde{B}(u,v)$ and $\mathbf{G}_\alpha(u)=J_\alpha G(u)$.
\end{itemize}
The major part of the paper is devoted to the proof of LDP result for the system \eqref{Eq: ABS-SVE-0} which provides the LDP and MDP results for the LANS-$\alpha$ model \eqref{Eq: LANS-ALPHA}. 
In fact, we observe that:
\begin{itemize}
	\item when $\delta=0$, the unique solution to \eqref{Eq: ABS-SVE-0} is exactly the unique solution to the LANS-$\alpha$ \eqref{Eq: LANS-ALPHA}. Thus, the LDP results for system \eqref{Eq: LANS-ALPHA} follows from the LDP result for the system \eqref{Eq: ABS-SVE-0} when $\delta=0$. 
	\item When $\delta=1$, the unique solutions to \eqref{Eq: ABS-SVE-0} is exactly $\alpha^{-\frac12}\lambda^{-1}(\alpha)[\bu^\alpha-\bu]$ where $\bu^\alpha$ and $\bu$ are the unique solution to \eqref{Eq: LANS-ALPHA}  and the deterministic NSE with initial data $\xi$, respectively. Hence,  the MDP result for \eqref{Eq: LANS-ALPHA} follows from the LDP results for the system \eqref{Eq: ABS-SVE-0} when $\delta=1$. 
\end{itemize}
Our main result  is stated in Theorem  \ref{THM:LDP1} whose  proof is presented in Section \ref{Sec:Proof-of-Main-Thm} and based on weak convergence approach to LDP and Budhiraja-Dupuis' results on representation of functionals of Brownian motion, see \cite{BuDu} and \cite{AB+PD+VM}. Also, we closely follow the techniques presented in the recent paper \cite{ZB+BG+TJ-2017}. Note however that our results do not fall into the framework of these papers or the results in \cite{Bessaih+Millet-2009}, \cite{Bessaih+Millet-2012},  \cite{Chueshov+Millet}, \cite{PAUL} \cite{TZhang-NSE-MDP}, \cite{YANG-ZHAI}. 
The authors of the papers \cite{Chueshov+Millet}, \cite{TZhang-NSE-MDP} and \cite{YANG-ZHAI} study the LDP or MDP of the Navier-Stokes equations and other hydroddynamical models, but they physical parameters such as the viscosity in their equations are not allowed to vanish.  The papers  \cite{Bessaih+Millet-2009}, \cite{Bessaih+Millet-2012} and \cite{PAUL} treat the LDP and zero viscosity limit of the shell models, the Naver-Stokes equations and the second grade fluids, respectively.

It is also worth pointing out that even though we rely on the abstract results in \cite{BuDu} and \cite{AB+PD+VM} our analysis are not trivial. Our results require the derivation of  uniform estimates on the difference between the terms in the LANS-$\alpha$ model and the Navier-Stokes equations. Due to the unifying approach to the LDP and MDP we present in this paper, these crucial  estimates are not available from previous works. 
We also note that as a by-product of our analysis we also show that the solution to \eqref{Eq: LANS-ALPHA} converges in probability to the unique solution to the deterministic NSE with initial data $\xi$ as $\alpha \to 0$, see Lemma \ref{Lem-Prop-Imp-1} and Remark \ref{Rem-ConvProba}. Of course, since we are in the two dimensional case this result is stronger than what was proved in \cite{GABU-SANGO}.

To close this introduction we now outline the layout of the paper. We introduce the necessary notations and the basic model in section \ref{sec:2}. In the same section we also give several preliminary results which are crucial for the subsequent analysis.  In section \ref{Assum-Noise}, we introduce the standing assumption on teh noise and state and prove a theorem on the existence and uniqueness of the solution to the problem \ref{Eq: ABS-SVE-0}. Section \ref{sec:4 Analysis} is devoted to the study of auxiliary deterministic and stochastic controlled systems. The results in section \ref{sec:4 Analysis} are important not only for the  the description of the rate functions associated to the LDP and MDP results but also for their proofs. Section \ref{sec:5Deviation} contains the main results and their proofs. Therein, we show  the convergence in probability of the solution to the stochastic LANS-$\alpha$ to the unique solution of the deterministic Navier-Stokes equations. By using the weak convergence approach we also prove in Section \ref{sec:5Deviation} 
that the solution of \eqref{Eq: ABS-SVE-0} satisfies the LDP on $C([0,T]; \h)\cap L^2(0,T; \ve)$. This LDP result for \eqref{Eq: ABS-SVE-0} provides the LDP and MDP results for the problem \eqref{Eq: LANS-ALPHA}. 

\section{Notations, the basic problems and some key estimates} \label{sec:2}

\subsection{Notation and the basic problems}
{We introduce necessary definitions of functional spaces frequently used in
	this work.  
	
	For a topological vector space $X$ we denote by $X'$ its dual space and we denote by $\langle u, u*\rangle_{X'}$ the duality paring between $u \in X$ and $u*\in X'$. 
	
	Throughout this paper we denote by $L^p(\mo; \mathbb{R}^2)$ and ${W}^{m,p}(\mo; \mathbb{R}^2)$, $p\in [1,\infty]$, $m\in \mathbb{N}$, the Lebesgue and Sobolev spaces of functions defined on $\mo$ and taking values in $\mathbb{R}^2$. The spaces of $u\in L^p(\mo; \mathbb{R}^2)$ and $W^{m,p}(\mo; \mathbb{R}^2)$ which are $2\pi$-periodic in each direction $0x_i$, $i=1,2$, see for example \cite{PC+CF-BlueBook}, are denoted by $\el^p(\mo)$ and $\w^{m,p}(\mo)$, respectively.  
	We simply write $\mathbf{L}^p$ (resp. $\w^{m,p}$) instead of $\el^p(\mo)$ (resp. $\w^{m,p}(\mo)$) when there is no risk of ambiguity. We will also use the notation $\h^m:=\w^{m,2}$. For non integer $r>0$ the Sobolev space $\h^r$ is defined by using classical interpolation method.
	The space $[\mathcal{C}_{\text{per}}^{\infty }(\mathbb{R}^2)]^2:= \mathcal{C}_{\text{per}}^\infty(\mathbb{R}^2,\mathbb{R}^2)$ denotes the space of functions  which are infinitely differentiable and $2\pi$-periodic in each direction $0x_i$, $i=1,2$. 
	
	We also introduce the following spaces 
	\begin{align*}
	\h& =\biggl\{ u \in \el^2(\mo); \int_{\mo} u(x) dx=0,\;\; \mathrm{div}\, u =0 \biggr\},\\
	\ve& =\h^1 \cap \h.
	\end{align*}%
	It is well-known, see \cite{TEMAMA}, that $\h$ and $\ve$ are the closure of
	\begin{equation*}
	\mathcal{V}=\biggl\{u \in [\mathcal{C}_{\text{per}}^\infty(\mathbb{R}^2)]^2; \int_{\mo} u(x)dx=0,\;\; \mathrm{div}\, u =0 \biggr\},
	\end{equation*}
	with respect to the $\el^2$ and $\h^1$ norms.
	We denote by $(\cdot ,\cdot )$ and $|\cdot |$ the inner product and the norm
	induced by the inner product and the norm in $\el^{2}(\CO)$ on ${%
		\h}$, respectively. Thanks to the Poincar\'e inequality we can endow the space $\ve$ with the norm $\lVert u \rVert= \lvert \nabla u \rvert, u\in \ve$.
	
	Let $\Pi:{\el}^2(%
	\mo)\rightarrow \h$ be the Helmholtz-Leray projection, and $%
	\rA=-\Pi \Delta$ be the Stokes operator with the domain $D(\rA)=\h^2(%
	\mo)\cap \h$. It is well-known that $\rA$ is a self-adjoint positive operator with compact inverse, see for instance \cite[Chapter 1, Section 2.6]{TEMAMA}.
	Hence, it has an orthonormal sequence of eigenvectors $\{e_j;\; j\in \mathbb{N}\}$ with corresponding eigenvalues $0<\lambda_1<\lambda_2<...$ The domain of $\rA^r$, $r\in \mathbb{R}$ is characterized by 
	\begin{equation*}
	D(\rA^r)= \ve \cap \h^{2r}, 
	\end{equation*}
	see \cite[page 43]{PC+CF-BlueBook}.  
	
	For $\alpha \in (0,1)$ we set 
	\begin{equation*}
	\lVert u \rVert_\alpha=\sqrt{  \lvert u \rvert^2 +\alpha^2 \lvert \rA u \rvert^2},\; u\in \ve. 
	\end{equation*}
Then, we observe  that $\lVert \cdot \rVert$, $\lVert \cdot \rVert_\alpha,$ $\alpha\in (0,1)$, and $\rvert \rA^\frac12 \cdot \rvert$ define three equivalent norms on $\ve$.
}

For the time being we assume that the stochastic perturbation $G(\bu^\alpha) dW$ is a divergence free function. Then, 
when projecting the system \eqref{Eq: LANS-ALPHA} onto the space of divergence free functions we obtain the following stochastic evolution equation
\begin{align}
\left\{
\begin{array}{ll}
d\bv^\alpha + [\rA\bv^\alpha + \tilde{B}(\bu^\alpha,\bv^\alpha)]dt = \alpha^\frac12 G(\bu^\alpha)dW &\\
\bv^\alpha = \bu^\alpha + \alpha^2\rA\bu^\alpha&\\
\bu^\alpha(t=0) = \xi.
\end{array}
\right. \label{Eq: ABS-LANS-ALPHA}
\end{align}

In a similar way, we can also write the $2D$ Navier-Stokes equations as the abstract evolution equation
\begin{align}
\left\{ 
\begin{array}{ll}
\frac{d}{dt}\bu + \rA\bu +B(\bu,\bu) = 0&\\
\bu(t=0) = \xi
\end{array}
\right. \label{Eq: ABS-NSE}
\end{align}
In \eqref{Eq: ABS-LANS-ALPHA} and \eqref{Eq: ABS-NSE} the nonlinear terms $\tilde{B}$ and $B$ are roughly defined by
\begin{align*}
\tilde{B}(u,v) =&  \Pi(u\cdot\nabla v + \sum_{j=1}^2v_j\nabla u_j) \\
B(u,v) = & \Pi(u\cdot\nabla v),
\end{align*}
respectively. These nonlinear maps satisfy several properties that will be recalled in the last subsection of this section. 

By introducing the following nonlinear maps
\begin{align*}
\tilde{B}_\alpha(u,v) &= (I+\alpha^2 \rA)^{-1}\tilde{B}(u,v)\\
\G (u) &= (I+\alpha^2 \rA)^{-1} G(u),
\end{align*}
the equation \eqref{Eq: ABS-LANS-ALPHA} can be rewritten in the following form:

\begin{align*}
\left\{\begin{array}{ll}
d\bu^\alpha+\rA u^\alpha + \tilde{B}_\alpha(\bu^\alpha,\bv^\alpha) = \alpha^\frac12 \G (u^\alpha)dW &\\
\bu^\alpha(t=0)= \xi.&
\end{array} 
\right.
\end{align*}

In the next few lines we will introduce an abstract stochastic evolution equation which will enable us to give a unifying approach to the large and moderate deviations for the problem \eqref{Eq: ABS-LANS-ALPHA}. 
For this purpose we fix a $\delta\in\{0,1\}$ and introduce the function $\lambda_\delta: (0, \infty) \to (0,\infty)$ defined by 
\begin{align*}
\lambda_\delta(\alpha)= \left\{ \begin{array}{ll}
1, &\text{if } \delta = 0,\\ \alpha^\frac12\lambda(\alpha),& \text{if } \delta = 1,
\end{array} \right.
\end{align*}
where $\lambda: (0, \infty) \to (0,\infty)$ is a function satisfying \eqref{Eq:Def-Lambda}. 

	\begin{Rk}\label{Rem:Lambdaalpha}
		In view of the definition of $\lambda_\delta(\alpha)$, we see that as $\alpha \to 0$
		\begin{equation*}
		\lambda_\delta(\alpha) \to 1-\delta.
		\end{equation*}
		Observe also that 	for $\ell\in \{1,2\}$ and $ k \ge \frac{\ell}{2} $
		\begin{equation*}
		\alpha^k \lambda^{-\ell}_\delta (\alpha) \to 0 \text{ as } \alpha \to 0.
		\end{equation*}
		Hence, we can and will assume that  for $\ell\in \{1,2\}$, $ k \ge \frac{\ell}{2} $ and $\alpha \in (0,1)$
		\begin{equation}
		\alpha^k  \lambda^{-\ell}_\delta (\alpha) \le 2
		\end{equation}
		where $\lambda(\alpha)$ is the function considered in \eqref{Eq:Def-Lambda}. 
	\end{Rk}
Before proceeding further we recall the following result on the 2D Navier-Stokees equations, see, for instance,  \cite{PC+CF-BlueBook} and \cite{TEMAMA} for its proof.
\begin{Thm}
	Let $\xi \in \ve$. Then, the problem \eqref{Eq: ABS-NSE} has a unique solution $\bu \in C([0,T]; \ve)\cap L^2(0,T; D(\rA))$.
\end{Thm}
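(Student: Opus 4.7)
The plan is to establish existence via a Faedo--Galerkin approximation, derive a priori estimates at two levels of regularity, pass to the limit by a compactness argument, and then prove uniqueness and continuity by standard techniques.

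First, I would set up the Galerkin scheme. Let $\mathrm{H}_n=\mathrm{span}\{e_1,\dots,e_n\}$, where $\{e_j\}_{j\ge 1}$ is the orthonormal basis of $\h$ consisting of eigenvectors of the Stokes operator $\rA$, and let $P_n\colon\h\to\mathrm{H}_n$ be the orthogonal projection. I would consider the finite-dimensional ODE
$$
\frac{d}{dt}\bu_n+\rA\bu_n+P_n B(\bu_n,\bu_n)=0,\qquad \bu_n(0)=P_n\xi,
$$
which admits a local smooth solution by Cauchy--Lipschitz. Testing with $\bu_n$ and using that $(B(\bu_n,\bu_n),\bu_n)=0$ yields the usual energy identity, giving a uniform bound in $L^\infty(0,T;\h)\cap L^2(0,T;\ve)$ and global existence. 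The crucial step is to test next with $\rA\bu_n$: in the 2D periodic, zero-mean setting the well-known identity $(B(\bu_n,\bu_n),\rA\bu_n)=0$ holds, so
$$
\tfrac{1}{2}\tfrac{d}{dt}\lvert \rA^{1/2}\bu_n\rvert^2+\lvert\rA\bu_n\rvert^2=0,
$$
which provides uniform bounds $\lVert \bu_n\rVert_{L^\infty(0,T;\ve)}\le\lVert\xi\rVert$ and $\lVert\bu_n\rVert_{L^2(0,T;D(\rA))}\le C(\lVert\xi\rVert,T)$. (If one prefers not to invoke the identity, the estimate $\lvert(B(u,u),\rA u)\rvert\le C\lvert u\rvert^{1/2}\lVert u\rVert\lvert\rA u\rvert^{3/2}$ combined with Young and Gronwall yields the same conclusion in 2D.)

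Next, the estimate on the time derivative $\frac{d\bu_n}{dt}=-\rA\bu_n-P_n B(\bu_n,\bu_n)$ can be controlled in $L^2(0,T;\h)$ using the two-dimensional bilinear bound $\lvert B(u,u)\rvert_{\h}\le C\lvert u\rvert^{1/2}\lVert u\rVert\lvert\rA u\rvert^{1/2}$ together with the already obtained a priori bounds. The Aubin--Lions--Simon compactness lemma then provides a subsequence converging to some $\bu$ strongly in $L^2(0,T;\ve)$ and weakly-$\ast$ in $L^\infty(0,T;\ve)$, weakly in $L^2(0,T;D(\rA))$. The strong convergence in $L^2(0,T;\ve)$ is enough to pass to the limit in the bilinear term $B(\bu_n,\bu_n)$ against any test function from $\mathrm{H}_m$, identifying $\bu$ as a weak solution of \eqref{Eq: ABS-NSE} with the stated regularity.

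For uniqueness I would take two solutions $\bu,\bu^\ast$ with the same initial datum, set $w=\bu-\bu^\ast$, write $B(\bu,\bu)-B(\bu^\ast,\bu^\ast)=B(w,\bu)+B(\bu^\ast,w)$, test with $w$, use $(B(\bu^\ast,w),w)=0$ and the 2D trilinear estimate $\lvert(B(w,\bu),w)\rvert\le C\lvert w\rvert\lVert w\rVert\lVert\bu\rVert$; Gronwall then gives $w\equiv 0$. Finally, the continuity $\bu\in C([0,T];\ve)$ follows from $\bu\in L^2(0,T;D(\rA))$ and $\frac{d\bu}{dt}\in L^2(0,T;\h)$ via the Lions--Magenes lemma applied to the Gelfand triple $D(\rA)\hookrightarrow\ve\hookrightarrow\h$. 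The main technical point is really the 2D-specific identity $(B(u,u),\rA u)=0$ in the periodic setting (or its quantitative substitute), since this is what elevates the solution from the Leray--Hopf regularity to the strong regularity required in the statement.
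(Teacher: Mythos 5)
Your proof is correct, and it is essentially the classical argument: the paper itself offers no proof of this theorem but simply cites \cite{TEMAMA} and \cite{PC+CF-BlueBook}, where precisely this Faedo--Galerkin scheme with the second-level estimate based on the 2D periodic identity $(B(u,u),\rA u)=0$ (together with Aubin--Lions compactness, the Gronwall uniqueness argument, and the Lions--Magenes lemma for continuity in $\ve$) is carried out. Nothing further is needed.
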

Throughout this paper, the symbol $\bu$ will denote the unique solution of the problem \eqref{Eq: ABS-NSE}. 

Now, we consider the following stochastic evolution equations.

\begin{align}
\left\{ 
\begin{array}{ll}
d\ydela + \left[\rA\ydela + \lambda_\delta(\alpha)\tilde{B}_\alpha(\ydela,\zdela) + \delta\left[\tilde{B}_\alpha(\bu,\zdela) +\tilde{B}_\alpha(\ydela,\valph)\right] \right] &\\
\qquad = -\lambda_\delta^{-1}(\alpha)\delta[\tilde{B}_\alpha(\bu,\valph)-B(\bu,\bu)]dt + \alpha^\frac12\lambda_\delta^{-1}(\alpha) \G (\delta\bu+\lambda_\delta(\alpha)\ydela)dW,&\\
\zdela = \ydela + \alpha^2\rA\ydela,&\\
\ydela(t=0) = (1-\delta)\xi, &
\end{array}
\right. \label{Eq: ABS-SVE}
\end{align}
where $J_\alpha =( I+\alpha^2\rA)^{-1}.$
\begin{Rk}
Observe that, if one is able to prove a LDP result for \eqref{Eq: ABS-SVE} then one just proved LDP and MDP results for the   \eqref{Eq: ABS-LANS-ALPHA}. In fact, the \eqref{Eq: ABS-SVE} reduces to \eqref{Eq: ABS-LANS-ALPHA} when $\delta=0.$
When $\delta=1$ then an LDP result for \eqref{Eq: ABS-SVE} yields an LDP for the process ${\bf y}^{\alpha,1} = \frac{\bu ^\alpha-\bu}{\lambda_\delta(\alpha)}$. This is just an MDP result for the \eqref{Eq: ABS-LANS-ALPHA}.
\end{Rk}
\subsection{Several key estimates}
To close the present section, we  recall and prove several well-known properties of the bilinear maps $B$ and $\tilde{B}$. These properties will play an important role in the sequel.

We first recall the following lemma  that was proved in \cite{FOIAS-HOLM-TITI}.
\begin{Lem}~~
	\begin{enumerate}
		\item Let $X$ be either $B$ or $\tilde{B}$. Then, the operator $%
		X $ can be extended continuously from $\ve\times \ve$ with values in $\ve^{\prime }
		$ (the dual space of $\ve$).
		In particular, for $u,v,w\in \ve$,
		\begin{equation}  \label{2.4}
		|\langle X(u,v),w\rangle_{\ve^{\prime }}|\le c|u|^{1/2}||u||^{1/2}||v||||w||,
		\end{equation}
		and 
		\begin{equation} \label{Eq:Def-Of-tildeB}
		({\tilde{B}}{(}{u}{,}{v}{)},w)=({B}{(}{u}{,}{v}{)},w)-({%
			B}{(}{w}{,}{v}{)},u).
		\end{equation}
		Moreover
		\begin{equation}  
		({B}{(}{u}{,}{v}{)},w)=-({B}{(}{u}{,}{w}{)},v),
		\,\,u,v,w\in \ve, \label{Eq:skew-symmetry}
		\end{equation}
		which in its turn implies that
		\begin{equation}  \label{Eq:Cancel-Prop-B}
		({B}{(}{u}{,}{v}{)},v)=0,\,\,u,v\in \ve.
		\end{equation}
		Also,
		\begin{equation} \label{Eq:Skew-Symm-tildeB}
		({\tilde{B}}{(}{u}{,}{v}{)},w)=({B}{(}{u}{,}{v}{)},w)-({%
			B}{(}{w}{,}{v}{)},u),\,\, u,v,w\in \ve,
		\end{equation}
		and hence
		\begin{equation} \label{Eq:Cancel-Prop-tildeB}
		({\tilde{B}}{(}{u}{,}{v}{)},u)=0, \,\,\, u,v\in \ve.
		\end{equation}	
		\item Furthermore, let $u\in D(\rA), v\in \ve, w\in \h$ and let $X$ be either $%
		B$ or $\tilde{B}$, then
		\begin{equation}  \label{2.9}
		|(X(u,v),w)|\le c \lvert \rA^\frac12 u \rvert^{1/2}|\rA u|^{1/2}\lvert \rA^\frac12 v \rvert|w|.
		\end{equation}
		
		\item Let $u\in \ve, v\in D(\rA), w\in \h$, then
		\begin{equation}  \label{2.10}
		|({B}{(}{u}{,}{v}{)},w)|\le c \lvert \rA^\frac12 u \rvert\lvert \rA^\frac12 v \rvert^{1/2}|\rA v|^{1/2}|w|.
		\end{equation}
		\item The operator $B$ and $\tilde{B}$ can be also extended continuously from $D(\rA^\frac12) \times \h$  with
		values in $D(\rA^{-1} )$. In particular,   if  $u\in D(\rA^\frac12), v\in \h, w\in D(\rA)$, then  
		\begin{equation}\label{Eq:Est-Ext-tildeB}
		\langle \tilde{B}(u,v), w\rangle \le C  [\lvert u \rvert^\frac12 \lvert \rA^\frac12 u\rvert^\frac12 \lvert \rA^\frac12 w \rvert^\frac12 \lvert \rA w \rvert^\frac12+ \lvert \rA w \rvert \,\lvert \rA^\frac12 u \rvert] \lvert v \rvert,
		\end{equation}
		hence the Poincar\'e inequality yields
		\begin{equation}  \label{2.15}
		|\langle{\tilde{B}}{(}{u}{,}{v}{)},w\rangle_{D(\rA)^{\prime }}|\le C \lvert \rA^\frac12 u \rvert |v| |\rA w|.
		\end{equation}
		Also, by symmetry we have for all $u\in D(\rA), v\in \h, w\in D(\rA^\frac12)$,
		\begin{equation}  \label{2.15-B}
		|\langle{\tilde{B}}{(}{u}{,}{v}{)},w\rangle_{D(\rA)^{\prime }}|\le C \lvert \rA u \rvert |v| |\rA^\frac12 w|.
		\end{equation}
	\end{enumerate}
\end{Lem}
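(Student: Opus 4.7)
The plan is to reduce every bound to the trilinear form $b(u,v,w)=\int_{\mo}(u\cdot \nabla v)\cdot w\, dx$, initially defined for smooth $2\pi$-periodic divergence-free fields, and then to extend by density using two key 2D functional inequalities: the Ladyzhenskaya inequality $\lVert u\rVert_{\el^4}\le c|u|^{1/2}\lVert u\rVert^{1/2}$ and the Agmon inequality $\lVert u\rVert_{\el^\infty}\le c\lvert \rA^{1/2}u\rvert^{1/2}\lvert \rA u\rvert^{1/2}$ valid in dimension two. The Helmholtz projection $\Pi$ is a bounded operator on each relevant Sobolev space, so it suffices to estimate $b$; the map $B$ is obtained by composing with $\Pi$ and $\tilde{B}$ is introduced through the algebraic identity \eqref{Eq:Def-Of-tildeB}, which is essentially the definition of the extra term $\sum_j v_j\nabla u_j$ after pairing with $w$.

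For part (1), integration by parts combined with $\divv\, u=0$ on smooth data gives $b(u,v,w)=-b(u,w,v)$, which yields \eqref{Eq:skew-symmetry} and then \eqref{Eq:Cancel-Prop-B} by taking $w=v$. Definition \eqref{Eq:Def-Of-tildeB}, together with \eqref{Eq:skew-symmetry}, yields \eqref{Eq:Skew-Symm-tildeB}, and setting $w=u$ there gives \eqref{Eq:Cancel-Prop-tildeB}. The continuity bound \eqref{2.4} follows from H\"older with exponents $(4,2,4)$: $|b(u,v,w)|\le \lVert u\rVert_{\el^4}\lVert \nabla v\rVert\lVert w\rVert_{\el^4}$, followed by Ladyzhenskaya applied to $u$ and $w$. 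Since $\tilde{B}$ is a linear combination of $B$-type terms by \eqref{Eq:Def-Of-tildeB}, the same bound applies to it.

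For parts (2) and (3) one forces one factor into $\el^\infty$ or $\el^4$. When $u\in D(\rA)$, the Agmon inequality controls $\lVert u\rVert_{\el^\infty}$, and H\"older with exponents $(\infty,2,2)$ on $b(u,v,w)$ produces \eqref{2.9}; the same bound transfers to $\tilde{B}$ via \eqref{Eq:Def-Of-tildeB}. For \eqref{2.10} with $v\in D(\rA)$, one applies H\"older $(4,4,2)$, estimates $\lVert u\rVert_{\el^4}\le c\lvert \rA^{1/2}u\rvert$ by Ladyzhenskaya combined with Poincar\'e, and $\lVert \nabla v\rVert_{\el^4}\le c\lvert \rA^{1/2}v\rvert^{1/2}\lvert \rA v\rvert^{1/2}$ by Ladyzhenskaya applied to $\nabla v$.

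The main technical step is part (4), which requires trading spatial regularity of $v$ for regularity of $w\in D(\rA)$. Integrating by parts one rewrites $b(u,v,w)=-\int_{\mo}(u\cdot \nabla w)\cdot v\, dx$; then H\"older $(4,4,2)$ combined with Ladyzhenskaya gives $|b(u,v,w)|\le c|u|^{1/2}\lvert \rA^{1/2}u\rvert^{1/2}|v|\lvert \rA^{1/2}w\rvert^{1/2}\lvert \rA w\rvert^{1/2}$, which is the first summand in \eqref{Eq:Est-Ext-tildeB}. The additional contribution $(\sum_j v_j\nabla u_j,w)$ coming from \eqref{Eq:Def-Of-tildeB} is estimated by H\"older $(2,2,\infty)$ together with the 2D Sobolev embedding $\lVert w\rVert_{\el^\infty}\le c\lvert \rA w\rvert$, producing the second summand. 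Estimate \eqref{2.15} then follows from \eqref{Eq:Est-Ext-tildeB} by Poincar\'e, and \eqref{2.15-B} is obtained by integrating by parts so that all derivatives fall on $u$ instead of $w$. The main obstacle is arranging these integrations by parts carefully for $\tilde{B}$ so that the final bound survives the density extension to $v\in \h$ while producing the precise two-term structure of \eqref{Eq:Est-Ext-tildeB}.
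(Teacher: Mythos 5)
The paper does not actually prove this lemma: it is recalled verbatim from the reference \cite{FOIAS-HOLM-TITI} (with \eqref{2.10} and its refinement attributed to \cite{PC+CF-BlueBook} and \cite{TEMAMA}), so there is no in-paper argument to compare against. Judged on its own, your proof is correct and is the standard route, which is essentially what those references do: reduce everything to the trilinear form $b(u,v,w)=\int_{\mo}(u\cdot\nabla v)\cdot w\,dx$ on smooth periodic divergence-free fields, obtain the antisymmetry $b(u,v,w)=-b(u,w,v)$ from $\divv u=0$ and periodicity (whence \eqref{Eq:skew-symmetry}--\eqref{Eq:Cancel-Prop-tildeB}), and then distribute H\"older exponents using Ladyzhenskaya, Agmon and $\h^2\embed \el^\infty$ before extending by density. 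Your identification of the extra term of $\tilde{B}$, namely $(\sum_j v_j\nabla u_j,w)=b(w,u,v)=-b(w,v,u)$, correctly yields \eqref{Eq:Def-Of-tildeB}, and the two-term structure of \eqref{Eq:Est-Ext-tildeB} comes out exactly as you describe: the $(4,4,2)$ split on $-b(u,w,v)$ gives the first summand and the $(2,2,\infty)$ split with $\lVert w\rVert_{\el^\infty}\le C\lvert \rA w\rvert$ gives the second. Two small points worth writing out if you expand this into a full proof: in part (2) the transfer to $\tilde{B}$ uses a \emph{different} H\"older split $(2,2,\infty)$ on the term $-b(w,v,u)$ (since $w$ is only in $\h$ there), not literally ``the same bound''; and in \eqref{2.15-B} the term $-b(w,v,u)$ with $v\in\h$ must first be rewritten as $b(w,u,v)$ before estimating, since $\nabla v$ is not defined --- your phrase ``all derivatives fall on $u$'' covers this, but the $(4,4,2)$ estimate $\lVert w\rVert_{\el^4}\lVert\nabla u\rVert_{\el^4}\lvert v\rvert\le C\lvert \rA^{\frac12}w\rvert\,\lvert \rA u\rvert\,\lvert v\rvert$ should be stated explicitly.
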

\begin{Rem}
	Observe that by using the H\"older and the Gagliardo-Nirenberg inequalities one can refine the estimate \eqref{2.10} as follows.
	 Let $u\in \ve, v\in D(\rA), w\in \h$, then
	\begin{equation}  \label{refine-2.10}
	|({B}{(}{u}{,}{v}{)},w)|\le c \lvert u \rvert^\frac12 \lvert \rA^\frac12 u \rvert^\frac12 \lvert \rA^\frac12 v \rvert^{1/2}|\rA v|^{1/2}|w|,
	\end{equation}
	see \cite{PC+CF-BlueBook} and \cite{TEMAMA} for the detail.
	
	Notice also that thanks to \eqref{Eq:Cancel-Prop-tildeB} we have 
 	\begin{equation}\label{Eq:Cancel-Balpha}
	(\tilde{B}_\alpha(u,v), J_\alpha^{-1}u) = \langle \tilde{B}(u,v), u\rangle_{\ve^\prime} =0 \text{ for all } u,\, v\in \ve.
	\end{equation}

\end{Rem}
 
The following lemma, which was proved in \cite{CAO-TITI}, will be needed in several places later on. 
\begin{Lem}
	Let $\phi \in \h$, $w \in D(\rA^\frac12)$. Then, for any $\alpha \in (0,1)$ we have 
	\begin{equation}\label{Eq:EstfromCao-Titi-}
	\langle \phi -J_\alpha \phi, w \rangle \le \frac{\alpha}{2} \lvert \phi \rvert \lvert \rA^\frac12 w \rvert.  
	\end{equation}
\end{Lem}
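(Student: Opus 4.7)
The plan is to use the spectral decomposition of $\rA$ together with the identity $\phi - J_\alpha \phi = \alpha^2 \rA J_\alpha \phi$, which follows immediately from the definition $J_\alpha = (I + \alpha^2 \rA)^{-1}$. Indeed, if we set $\psi = J_\alpha \phi$, then $(I+\alpha^2\rA)\psi = \phi$, so $\phi - \psi = \alpha^2 \rA \psi = \alpha^2 \rA J_\alpha \phi$. Note that because $J_\alpha$ maps $\h$ into $D(\rA)$, this quantity belongs to $\h$ and the duality pairing $\langle \phi - J_\alpha \phi, w\rangle$ is in fact the inner product in $\h$.

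Next, since $\rA$ is self-adjoint and $J_\alpha \phi \in D(\rA)$, we may ``move half a derivative'' onto $w\in D(\rA^{1/2})$ and use Cauchy--Schwarz:
\begin{equation*}
\langle \phi - J_\alpha \phi, w\rangle = \alpha^2 \bigl(\rA^{1/2} J_\alpha \phi,\; \rA^{1/2} w\bigr) \le \alpha^2 \, \lvert \rA^{1/2} J_\alpha \phi \rvert \, \lvert \rA^{1/2} w \rvert.
\end{equation*}
So it remains to establish the key bound $\lvert \rA^{1/2} J_\alpha \phi \rvert \le \tfrac{1}{2\alpha} \lvert \phi \rvert$.

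For this, I would expand $\phi = \sum_{k} \phi_k e_k$ in the orthonormal eigenbasis of $\rA$, with eigenvalues $\lambda_k > 0$. Then
\begin{equation*}
\lvert \rA^{1/2} J_\alpha \phi \rvert^2 = \sum_{k} \frac{\lambda_k}{(1+\alpha^2 \lambda_k)^2}\, \phi_k^2,
\end{equation*}
and the elementary AM--GM estimate $(1+\alpha^2 \lambda_k)^2 \ge 4\alpha^2 \lambda_k$ gives $\frac{\lambda_k}{(1+\alpha^2\lambda_k)^2} \le \frac{1}{4\alpha^2}$ uniformly in $k$. Hence $\lvert \rA^{1/2} J_\alpha \phi \rvert^2 \le \frac{1}{4\alpha^2}\lvert \phi \rvert^2$, which combined with the previous display yields \eqref{Eq:EstfromCao-Titi-}.

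There is no substantial obstacle here; the only subtlety worth flagging is ensuring the duality is interpreted correctly, which is immediate since $\phi - J_\alpha \phi \in D(\rA) \subset \h$ and $w \in D(\rA^{1/2})$, so the pairing reduces to the $\h$ inner product and all the manipulations above are justified.
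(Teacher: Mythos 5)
Your proof is correct. The paper itself does not prove this lemma; it simply cites Cao--Titi \cite{CAO-TITI}. Your argument -- writing $\phi - J_\alpha\phi = \alpha^2 \rA J_\alpha \phi$, moving $\rA^{1/2}$ onto $w$, and bounding $\lvert \alpha\rA^{1/2}J_\alpha\rvert_{\mathscr{L}(\h)}\le \tfrac12$ spectrally via AM--GM -- is the standard route and is exactly the operator estimate the paper records separately as \eqref{Eq:3.20}, so it is fully consistent with the toolkit the authors use.
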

We also need the following three lemmata.
\begin{Lem}
	There exists a constant $C>0$ such that for any $\alpha \in (0,1)$, any $y, u\in D(\rA)$ we have 
	\begin{align}
	&	( J_\alpha^{-1}y, B(u,u)-\tilde{B}_\alpha (u, J^{-1}_\alpha u) )  \le C \left[\frac\alpha2 \lvert \rA^\frac12 y \rvert + \alpha^2 \lvert \rA y \rvert \right] \left(\lvert B(u,u)\rvert + \lvert \rA^\frac12 u \rvert\, \lvert\rA u \rvert \right) , \label{Eq:Est-DiffBandtildeB}\\
	&	( y, B(u, u)-\tilde{B}_\alpha (u, J^{-1}_\alpha u) ) \le C \frac\alpha2 \lvert \rA^\frac12 y \rvert  \lvert B(u, u)\rvert +\frac\alpha2 C\lvert\rA u \rvert^2 \,\lvert y \rvert. \label{Eq:Est-DiffBandtildeB-2}
	\end{align}
\end{Lem}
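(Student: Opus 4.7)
The plan is to compute both inner products directly by unwinding $\tilde{B}_\alpha(u,\cdot) = J_\alpha \tilde{B}(u,\cdot)$ and $J_\alpha^{-1} u = u + \alpha^2 \rA u$, exploiting the self-adjointness of $J_\alpha$, the symmetry identity \eqref{Eq:Def-Of-tildeB}, and the cancellation $(B(y,u),u)=0$ coming from \eqref{Eq:skew-symmetry}. The resulting expression will then be controlled by three ingredients: the Cao--Titi lemma \eqref{Eq:EstfromCao-Titi-}, the estimate \eqref{2.15-B} for $\tilde{B}$, and the elementary smoothing inequality $\lvert \rA^{\frac12} J_\alpha y\rvert \le \frac{1}{2\alpha}\lvert y\rvert$, which follows from the spectral representation of $J_\alpha$.

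For the first inequality, moving $J_\alpha$ to the right-hand factor via self-adjointness gives $(J_\alpha^{-1} y, \tilde{B}_\alpha(u, J_\alpha^{-1} u)) = (y, \tilde{B}(u, J_\alpha^{-1} u))$; expanding $J_\alpha^{-1} u = u + \alpha^2 \rA u$ inside $\tilde{B}$ together with \eqref{Eq:Def-Of-tildeB}, the cancellation $(B(y,u),u)=0$, and one further use of \eqref{Eq:skew-symmetry} yields
\[
(y, \tilde{B}(u, J_\alpha^{-1} u)) = (y, B(u,u)) + \alpha^2 \bigl[(y, B(u, \rA u)) + (B(y,u), \rA u)\bigr].
\]
The algebraic heart of the proof is the identity
\[
(y, B(u,\rA u)) + (B(y,u),\rA u) = (\tilde{B}(u,\rA u), y),
\]
an immediate consequence of \eqref{Eq:Def-Of-tildeB} and \eqref{Eq:skew-symmetry}, which fuses the two bilinear terms into a single $\tilde{B}$-expression. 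Subtracting from $(J_\alpha^{-1} y, B(u,u)) = (y,B(u,u)) + \alpha^2(\rA y, B(u,u))$, the $(y,B(u,u))$ pieces cancel and I am left with
\[
(J_\alpha^{-1} y, B(u,u) - \tilde{B}_\alpha(u, J_\alpha^{-1} u)) = \alpha^2(\rA y, B(u,u)) - \alpha^2(\tilde{B}(u,\rA u), y).
\]
Cauchy--Schwarz bounds the first term by $\alpha^2\lvert \rA y\rvert\lvert B(u,u)\rvert$, and \eqref{2.15-B} applied with $w=y$ gives $\lvert(\tilde{B}(u,\rA u),y)\rvert \le C\lvert \rA u\rvert^2\lvert \rA^{\frac12} y\rvert$; this already implies \eqref{Eq:Est-DiffBandtildeB} (the extra $\frac{\alpha}{2}\lvert\rA^{\frac12}y\rvert\lvert B(u,u)\rvert$ on its right-hand side is free slack).

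The second inequality follows by repeating the same manipulation with $J_\alpha y$ in place of $y$, producing
\[
(y, B(u,u) - \tilde{B}_\alpha(u, J_\alpha^{-1} u)) = (y - J_\alpha y, B(u,u)) - \alpha^2 (\tilde{B}(u, \rA u), J_\alpha y).
\]
For the first term, self-adjointness rewrites it as $(y,(I-J_\alpha)B(u,u))$ and \eqref{Eq:EstfromCao-Titi-} applied with $\phi = B(u,u)$ and $w = y$ delivers $\frac{\alpha}{2}\lvert B(u,u)\rvert\lvert \rA^{\frac12} y\rvert$. For the second, \eqref{2.15-B} gives $C\alpha^2\lvert\rA u\rvert^2\lvert\rA^{\frac12} J_\alpha y\rvert$, and the spectral inequality $\lvert\rA^{\frac12} J_\alpha y\rvert \le \frac{1}{2\alpha}\lvert y\rvert$ converts the surplus factor of $\alpha$ into the required $\frac{\alpha}{2}\lvert\rA u\rvert^2\lvert y\rvert$. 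The main obstacle throughout is spotting the algebraic identity that collapses the two stray $B$-terms into a single $\tilde{B}(u,\rA u)$; without it, estimating $(y,B(u,\rA u))$ and $(B(y,u),\rA u)$ individually forces factors of $\lvert\rA y\rvert$ or a fractional power of $\alpha$ that will not fit the $\lvert\rA u\rvert^2$ structure on the right-hand side of \eqref{Eq:Est-DiffBandtildeB-2}.
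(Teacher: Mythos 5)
Your proof is correct and follows essentially the same route as the paper: expand $J_\alpha^{-1}u = u+\alpha^2\rA u$ and use $B(u,u)=\tilde B(u,u)$, then control the pieces with the Cao--Titi estimate \eqref{Eq:EstfromCao-Titi-}, the spectral bounds on $J_\alpha$, and the continuity estimate \eqref{2.15-B} for $\tilde B(u,\rA u)$. The only (harmless) difference is that you move $J_\alpha$ onto the test function before expanding, so in the first inequality the $(y,B(u,u))$ terms cancel exactly and you obtain a slightly sharper bound that does not even need the $\tfrac{\alpha}{2}\lvert\rA^{1/2}y\rvert\,\lvert B(u,u)\rvert$ slack present in \eqref{Eq:Est-DiffBandtildeB}.
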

\begin{proof}
	Let $y, u \in D(\rA)$ and $\alpha \in (0,1)$. In order to simplify the notation we set $\phi = B(u, u)$. By the bilinearity of $\tilde{B}$ and $B$, and the fact $B(u, u)=\tilde{B}(u, u)$ we have 
	\begin{align*}
	&	( J_\alpha^{-1}y , B(u, u)-\tilde{B}_\alpha (u, J^{-1}_\alpha u) ) \\
	=& ( J^{-1}_\alpha y, B(u, u)-J_\alpha B(u, u)) +  \alpha^2  (J_\alpha^{-1} y, B(u, u)-J_\alpha \tilde{B}(u,\rA u)) ,\\
	=& (y, B(u, u)-J_\alpha B(u, u)) +\alpha^2  (\rA y, B(u, u)-J_\alpha \tilde{B}(u,\rA u) )  +\alpha^2 ( J_\alpha^{-1} y, B(u, u)-J_\alpha \tilde{B}(u,\rA u) ).
	\end{align*}
	By using the last line, \eqref{Eq:EstfromCao-Titi-}, the facts that 
	\begin{align}
	\lvert \alpha^2  \rA (I+\alpha^2 \rA)^{-1}\rvert_{\mathscr{L}(H)}= \sup_{k \in \mathbb{N}} \frac{\alpha^2\lambda_k}{1+\alpha^2 \lambda_k}\le 1, \\
	\lvert \alpha  \rA^\frac12 (I+\alpha^2 \rA)^{-1}\rvert_{\mathscr{L}(H)}= \sup_{k \in \mathbb{N}} \frac{(\alpha^2\lambda_k)^\frac12}{1+\alpha^2 \lambda_k}\le \frac12, \label{Eq:3.20}
	\end{align}
	and the inequalities \eqref{Eq:Est-Ext-tildeB}  we easily establish \eqref{Eq:Est-DiffBandtildeB}. 
	
	The second estimate \eqref{Eq:Est-DiffBandtildeB-2} is proved in  a similar way. 
 
\end{proof}
\begin{Lem}\label{lemma3.4}
	There exists a constant $C>0$ such that for any $y, \in \h, v, w \in D(\rA),$ and any $\alpha \in (0,1)$ we have 
	\begin{equation}\label{Eq:linear-pert}
	 ( \tilde{B}_\alpha(v, J_\alpha^{-1}w), y )\le C \lvert \rA v \rvert \,\lvert y\rvert\left( \lvert \rA^\frac12 w \rvert +\alpha \lvert \rA w \rvert\right).
	\end{equation}
\end{Lem}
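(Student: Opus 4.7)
The plan is to rewrite $\tilde{B}_\alpha(v, J_\alpha^{-1}w)=J_\alpha\tilde{B}(v, J_\alpha^{-1}w)$, exploit the self-adjointness of $J_\alpha$ on $\h$ to shift $J_\alpha$ onto $y$, decompose $J_\alpha^{-1}w = w+\alpha^2\rA w$, and then estimate each of the two resulting pieces with the bilinear estimate best adapted to its regularity. Concretely, I would write
\begin{equation*}
(\tilde{B}_\alpha(v, J_\alpha^{-1}w), y) = (\tilde{B}(v, J_\alpha^{-1}w), J_\alpha y) = (\tilde{B}(v, w), J_\alpha y) + \alpha^2 (\tilde{B}(v, \rA w), J_\alpha y) =: I_1 + \alpha^2 I_2,
\end{equation*}
and treat $I_1$ and $I_2$ separately.

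For $I_1$, the triple of arguments has $v\in D(\rA)$, $w\in \ve$, and $J_\alpha y\in \h$ with $|J_\alpha y|\le |y|$ by contractivity of $J_\alpha$ on $\h$. Applying \eqref{2.9} with $X=\tilde B$ together with the Poincar\'e inequality $|\rA^{1/2}v|\le C|\rA v|$ would give
\begin{equation*}
|I_1| \le C|\rA^{1/2}v|^{1/2}|\rA v|^{1/2}\,|\rA^{1/2}w|\,|J_\alpha y| \le C|\rA v|\,|\rA^{1/2}w|\,|y|.
\end{equation*}

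For $I_2$, now $\rA w\in \h$ and $J_\alpha y\in D(\rA^{1/2})$. I would apply \eqref{2.15-B} with $u=v$, middle slot $\rA w$, and outer slot $J_\alpha y$, and then use the sharp spectral bound $\alpha|\rA^{1/2}J_\alpha y|\le \tfrac12|y|$ provided by \eqref{Eq:3.20}, which yields
\begin{equation*}
\alpha^2 |I_2| \le C\alpha^2 |\rA v|\,|\rA w|\,|\rA^{1/2}J_\alpha y| \le \tfrac{C}{2}\,\alpha\,|\rA v|\,|\rA w|\,|y|.
\end{equation*}
Summing the two bounds produces exactly \eqref{Eq:linear-pert}.

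The main subtlety is the careful matching of $\alpha$-powers. Term $I_1$ must be bounded via an estimate that costs no negative power of $\alpha$, which is why I would use \eqref{2.9} (which tolerates $J_\alpha y\in\h$) rather than \eqref{2.15-B}, as the latter would force the factor $|\rA^{1/2}J_\alpha y|\lesssim \alpha^{-1}|y|$ and produce a forbidden $\alpha^{-1}$ term. In $I_2$, by contrast, one can afford exactly one factor $\alpha^{-1}$ coming from $|\rA^{1/2}J_\alpha y|$, as it is absorbed by the $\alpha^2$ prefactor to leave the single power of $\alpha$ that appears on the right-hand side of \eqref{Eq:linear-pert}.
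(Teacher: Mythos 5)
Your proposal is correct and follows essentially the same route as the paper: the same decomposition $(\tilde{B}_\alpha(v,J_\alpha^{-1}w),y)=(\tilde{B}(v,w),J_\alpha y)+\alpha^2\langle\tilde{B}(v,\rA w),J_\alpha y\rangle$, the same two intermediate bounds $C\lvert\rA v\rvert\,\lvert\rA^{1/2}w\rvert\,\lvert J_\alpha y\rvert$ and $C\alpha^2\lvert\rA v\rvert\,\lvert\rA w\rvert\,\lvert\rA^{1/2}J_\alpha y\rvert$, and the same spectral bounds $\lvert J_\alpha\rvert_{\mathscr{L}(\h)}\le 1$ and $\alpha\lvert\rA^{1/2}J_\alpha y\rvert\le\tfrac12\lvert y\rvert$ to conclude. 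The only cosmetic difference is which of the equivalent bilinear estimates you cite for the first term (\eqref{2.9} plus Poincar\'e versus the paper's combination of \eqref{Eq:Skew-Symm-tildeB}, \eqref{Eq:Def-Of-tildeB} and \eqref{Eq:Est-Ext-tildeB}); the resulting inequality is identical.
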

\begin{proof}
	Throughout this proof $C$ will denote a constant independent of $\alpha$.
	
	Let $y\in \h, v, w \in D(\rA),$ and  $\alpha \in (0,1)$. Observe that 
	\begin{equation*}
	(\tilde{B}_\alpha(v, J_\alpha^{-1}w), y )= (\tilde{B}(v,w), J_\alpha y)  +\alpha^2 \langle \tilde{B}(v,\rA w), J_\alpha y \rangle_{D(\rA)'}.
	\end{equation*}
	Now, by applying the inequalities \eqref{Eq:Skew-Symm-tildeB}, \eqref{Eq:Def-Of-tildeB}, \eqref{Eq:Est-Ext-tildeB} and the H\"older inequality we find that there is a constant $C>0$ such that 
	\begin{align*}
	  (\tilde{B}_\alpha(v, J_\alpha^{-1}w), y )\le C \lvert \rA v \rvert \lvert \rA^\frac12 w \rvert \lvert J_\alpha y \rvert + \alpha^2 \lvert \rA^\frac12 J_\alpha y \rvert \lvert \rA w \rvert \lvert \rA v \rvert.
	\end{align*}
	By using the fact $\lvert (\alpha^2 \rA)^\frac12 (I+\alpha^2 \rA)^{-1}\rvert_{\mathscr{L}(\h)}\le \frac12$ and $\lvert J_\alpha \rvert_{\mathscr{L}(\h)}\le 1$ we see that 
	\begin{align*}
	  (\tilde{B}_\alpha(v, J_\alpha^{-1}w), y) \le C  \lvert \rA v \rvert \lvert \rA^\frac12 w \rvert \lvert  y \rvert + \alpha \lvert  y \rvert \lvert \rA w \rvert \lvert \rA v \rvert,
	\end{align*}
	which completes the proof of the lemma.
\end{proof}

\begin{Lem}\label{Lem:Need-Uniqueness}
	There exists a constant $C>0$ such that for any $\alpha \in (0,1)$ and any $u\in D(\rA),\,y\in D(\rA):$
	\begin{align*}
	\lqq{\left|(\tilde{B}_\alpha(u,J_\alpha^{-1} y ),J_\alpha^{-1} y )  +(\tilde{B}_\alpha(y,\valph),J_\alpha^{-1} y )   \right|} \\
	&\leq\frac14 |y|^2+\frac{\alpha^2}{2}|\rA^\frac12y|^2 + \frac{\alpha^4}{4}|\rA y|^2+\alpha^{-2}C|u|^2_{D(\rA)}\left[ |y|^2+\alpha^2|\rA^\frac12y|^2 \right].
	\end{align*}
\end{Lem}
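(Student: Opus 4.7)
I would use the self-adjointness of $J_\alpha=(I+\alpha^2\rA)^{-1}$ on $\h$ together with the cancellation property \eqref{Eq:Cancel-Prop-tildeB} of $\tilde{B}$ to collapse the left-hand side to a single $O(\alpha^2)$ residue, and then close with the extended bilinear estimate \eqref{2.15-B} and a single Young inequality; the slack terms $\frac14|y|^2$ and $\frac{\alpha^2}{2}|\rA^{1/2}y|^2$ on the right turn out to be free (non-negative) padding that I never actually need.

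First, since $\tilde{B}_\alpha=J_\alpha\tilde{B}$ and $J_\alpha$ is self-adjoint on $\h$, I would transfer $J_\alpha$ onto the test slot to obtain
\begin{align*}
(\tilde{B}_\alpha(u,J_\alpha^{-1}y),J_\alpha^{-1}y) &= \langle\tilde{B}(u,J_\alpha^{-1}y),y\rangle,\\
(\tilde{B}_\alpha(y,J_\alpha^{-1}u),J_\alpha^{-1}y) &= \langle\tilde{B}(y,J_\alpha^{-1}u),y\rangle.
\end{align*}
The second pairing vanishes: \eqref{Eq:Cancel-Prop-tildeB} gives $(\tilde{B}(y,v),y)=0$ for $y,v\in\ve$, and since $J_\alpha^{-1}u=u+\alpha^2\rA u$ only lies a priori in $\h$, I would approximate it in $\h$-norm by elements of $\ve$ and pass to the limit via \eqref{2.15-B}. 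For the first pairing, expanding $J_\alpha^{-1}y=y+\alpha^2\rA y$ by bilinearity and killing the leading piece by $(\tilde{B}(u,y),y)=0$ reduces it to $\alpha^2\langle\tilde{B}(u,\rA y),y\rangle$.

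I would then invoke \eqref{2.15-B} with the pattern $(u,\rA y,y)\in D(\rA)\times\h\times D(\rA^{1/2})$ to get
\[
\bigl|\alpha^2\langle\tilde{B}(u,\rA y),y\rangle\bigr|\;\le\; C\alpha^2|\rA u|\,|\rA y|\,|\rA^{1/2}y|,
\]
and Young's inequality splitting $\alpha^2|\rA y|$ against $C|\rA u|\,|\rA^{1/2}y|$ yields $\frac{\alpha^4}{4}|\rA y|^2+C^2|u|^2_{D(\rA)}|\rA^{1/2}y|^2$. Rewriting the last factor as $\alpha^{-2}(\alpha^2|\rA^{1/2}y|^2)\le \alpha^{-2}[|y|^2+\alpha^2|\rA^{1/2}y|^2]$ and appending the non-negative slack $\frac14|y|^2+\frac{\alpha^2}{2}|\rA^{1/2}y|^2$ on the right produces exactly the announced bound (after absorbing $C^2$ into a new constant $C$).

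The only non-routine point is the density argument justifying the vanishing of $\langle\tilde{B}(y,J_\alpha^{-1}u),y\rangle$, since $J_\alpha^{-1}u$ does not in general sit in $\ve$ under the hypothesis $u\in D(\rA)$; this has to be carried out cleanly using \eqref{2.15-B}, which makes $v\mapsto\langle\tilde{B}(y,v),y\rangle$ continuous on $\h$ so that the cancellation identity transfers from $\ve$ to $\h$ in the second argument. Everything else reduces to the standard toolbox already assembled in this section.
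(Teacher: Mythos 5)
There is a genuine error in your treatment of the first pairing. After expanding $J_\alpha^{-1}y=y+\alpha^2\rA y$ you discard the leading piece by invoking ``$(\tilde{B}(u,y),y)=0$'', but this is not one of the cancellation identities of the paper. The cancellation \eqref{Eq:Cancel-Prop-tildeB} reads $(\tilde{B}(a,b),a)=0$, i.e.\ it requires the \emph{first} and third arguments to coincide; in $(\tilde{B}(u,y),y)$ it is the \emph{second} and third that coincide, and by \eqref{Eq:Def-Of-tildeB} together with \eqref{Eq:Cancel-Prop-B} one has $(\tilde{B}(u,y),y)=(B(u,y),y)-(B(y,y),u)=-(B(y,y),u)$, which is nonzero in general. (Your second pairing $\langle\tilde{B}(y,J_\alpha^{-1}u),y\rangle$ is fine precisely because there the first and third slots are both $y$; the density argument you sketch for extending the cancellation to a second argument in $\h$ via \eqref{2.15-B} is legitimate.) The irony is that the discarded term is exactly what the ``free padding'' on the right-hand side is there to absorb: writing $-(B(y,y),u)=(B(y,u),y)$ and using the Ladyzhenskaya-type bound $\lvert(B(y,u),y)\rvert\le C\lvert y\rvert\,\lvert \rA^{\frac12}y\rvert\,\lvert \rA^{\frac12}u\rvert$ followed by Young's inequality produces the contribution $\frac{\alpha^2}{2}\lvert \rA^{\frac12}y\rvert^2+\alpha^{-2}C\lvert u\rvert^2_{D(\rA)}\lvert y\rvert^2$, which fits inside the stated estimate. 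With that correction your route closes; your handling of the residue $\alpha^2\langle\tilde{B}(u,\rA y),y\rangle$ via \eqref{2.15-B} and Young is correct.

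For comparison, the paper never splits $z=J_\alpha^{-1}y$ inside the nonlinearity: it rewrites $\langle\tilde{B}(u,z),y\rangle=-\langle B(u,y),z\rangle+\langle B(y,u),z\rangle$, bounds the whole expression by $C\lvert z\rvert\,\lvert u\rvert_{D(\rA)}\Vert y\Vert$ using the embeddings $D(\rA)\subset\el^\infty$ and $\ve\subset\el^4$, and applies Young once, so that $\frac14\lvert z\rvert^2=\frac14\lvert y\rvert^2+\frac{\alpha^2}{2}\lvert\rA^{\frac12}y\rvert^2+\frac{\alpha^4}{4}\lvert\rA y\rvert^2$ accounts for all three leading terms on the right at once. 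That is why every term on the right-hand side of the lemma is actually used in the paper's proof, contrary to your claim that most of them are slack.
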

\begin{proof}
	Let $u\in D(\rA),\,y\in D(\rA)$ and $z=y+\alpha^2\rA y$.
	Using equation (4) on page 5 of \cite{FOIAS-HOLM-TITI} we obtain:
	\begin{align*}
	 (\tilde{B}_\alpha(u,z),z) &= ( J_\alpha\tilde{B}(u,z), J_\alpha^{-1}y  ) \\
	&= \langle \tilde{B}(u,z),y \rangle_{\ve'} \\
	&=\langle  B(u,z),y \rangle_{\ve'} - \langle  B(y,z),u \rangle_{\ve'}.
	\end{align*}
	
	Using the well-known property
	\begin{align*}
	\langle  B(u,z),y \rangle_{\ve'} = - \langle  B(u,y),z \rangle_{\ve'} ,
	\end{align*}
	we obtain
	\begin{align*}
	(\tilde{B}_\alpha(u,z),z )= - \langle  B(u,y),z \rangle_{\ve'} +   \langle  B(y,u),z \rangle_{\ve'}.
	\end{align*}
	Now the H\"older and Young inequalities along with the Sobolev embedding, $D(\rA)\subset \el^\infty$ and $\ve\subset \el^4$ we find that there exists $C>0$ such that 
	\begin{align*}
	(\tilde{B}_\alpha(u,z),z ) &\leq  C|z|\left[ |u|_{\el^\infty}\|y\|+\|y\||u|_{D(\rA)} \right] \\
	&\leq C|z||u|_{D(\rA)}\|y\| \\
	&\leq \frac14 |z|^2 + C|u|^2_{D(\rA)}\|y\|^2\\
	&\leq \frac14 |y + \alpha^2\rA y|^2 +\alpha^{-2}C|u|^2_{D(\rA)}\alpha^2\|y\|^2 \\
	&\leq \frac14 |y|^2 +\frac{\alpha^2}{2} |\rA^\frac12y|^2 + \frac14 |\rA y|^2 + \alpha^{-2}C|u|^2_{D(\rA)}\left[|y|^2+\alpha^2\|y\|^2 \right].
	\end{align*}
 
	The last line of the inequality completes the proof of the lemma because
	\begin{align*}
	 (\tilde{B}_\alpha(y,J_\alpha^{-1} u  ),z )
	&=( J_\alpha \tilde{B} (y,J_\alpha^{-1} u ),J^{-1}_\alpha y)  \\
	&= \langle \tilde{B} (y,J_\alpha^{-1} u), y \rangle_{D(\rA)'} =0.
	\end{align*}
\end{proof}

\section{Assumptions on the noise coefficient and a well-posdeness result}\label{Assum-Noise}
 
 This section is devoted to the formulation of the standing assumption on the noise and the presentation of a well-posedness result. 
\subsection{Formulation of the assumptions on the noise}
Throughout we fix  a complete filtered probability space $\mathscr{U}:=(\Omega, \mathscr{F}, \mathbb{F}, \mathbb{P})$  where the filtration $\mathbb{F}=\{\mathscr{F}_t;\;\; t\in [0, T] \}$ satisfies the usual conditions. We also fix two separable Hilbert spaces  $\mathrm{K}$ and $\mathrm{K}_1$ such that the canonical injection $\iota: \mathrm{K} \to \mathrm{K}_1$ is Hilbert-Schmidt. The operator $Q=\iota \iota^\ast$, where $\iota^\ast$ is the adjoint of $\iota$, is symmetric, nonnegative. Since $\iota$ is Hilbert-Schmidt $Q$ is also of trace class. Moreover, from \cite[Corollary C.0.6]{Prevot+Rockner} we infer that $\mathrm{K}= Q^{\frac12}(\mathrm{K}_1) $. Now, let $W$ be a cylindrical Wiener process evolving on $\mathrm{K}$. 
It is well-known, see \cite[Theorem 4.5]{DP+JZ-14}, that $W$ has the following series representation 
$$ W(t)= \sum_{j=1}^\infty \sqrt{q_j} \beta_j(t) h_j, \quad t\in [0,T],$$ 
where $\{\beta_j; \; j \in \mathbb{N} \}$ is a sequence of mutually independent and identically distributed standard Brownian motions,  $\{h_j; j\in \mathbb{N} \}$ is an orthonormal basis of $\mathrm{K}$ consisting of eigenvectors of $Q$ and $\{q_j;\; j\in \mathbb{N}\}$ is the family of eigenvalues of $Q$. It is also well-known, see \cite[Section 4.1]{DP+JZ-14} and \cite[Section 2.5.1]{Prevot+Rockner}, that $W$ is an $\mathrm{K}_1$-valued Wiener process  with covariance $Q$. 

Now, we recall few basic facts about stochastic integrals with respect to a cylindrical Wiener process evolving on $\mathrm{K}$.  For this purpose, let $\mathscr{H}$ be a separable Banach space, $\mathscr{L}(\mathrm{K}, \mathscr{H})$ the space of all bounded linear $\mathscr{H}$-valued  operators defined on $\mathrm{K}$, and  $\mathscr{M}_T^2(\mathscr{H}):=\mathscr{M}^2(\Omega\times [0,T]; \mathscr{H})$ the space of all equivalence classes of $\mathbb{F}$-progressively measurable processes $\Psi: \Omega\times [0,T]\to\mathrm{K}$ satisfying 
$$ \E\int_0^T \Vert \Psi(r)\Vert^2_{\mathscr{H}}dr <\infty.$$ 
We denote by $\mathscr{L}_2(\mathrm{K}, \mathscr{H})$ the Hilbert space of all operators $\Psi\in \mathscr{L}( \mathrm{K}, \mathscr{H} )$ satisfying 
$$ \Vert \Psi\Vert_{\mathscr{L}_2(\mathrm{K}, \mathscr{H})}^2 =\sum_{j=1}^\infty \Vert \Psi h_j\Vert^2_{\mathscr{H}}<\infty. $$   

From the theory of stochastic integration on infinite dimensional Hilbert space, see \cite[Chapter 5, Section 26 ]{MET-82} and \cite[Chapter 4]{DP+JZ-14}, for any $\Psi \in \mathscr{M}^2_T(\mathscr{L}_2(\mathrm{K}, \mathscr{H}))$ the process $M$ defined by 
$$ M(t) =\int_0^t \Psi(r)dW(r), t\in [0,T],$$ is a $\mathscr{H}$-valued martingale. Moreover,  we have the following It\^o isometry 
\begin{equation}
	\E \biggl(\biggl\Vert \int_0^t \Psi(r) dW(r)\biggr\Vert^2_{\mathscr{H}}\biggr) =\E \biggl(\int_0^t \Vert \Psi(r)  \Vert^2_{\mathscr{L}_2(\mathrm{K}, \mathscr{H})} dr \biggr), \quad \forall t\in [0,T],
\end{equation}	
and the Burkholder-Davis-Gundy's (BDG's) inequality 
\begin{equation}
	\E\biggl(\sup_{0\le s\le t}\biggl \Vert \int_0^s \Psi(r) dW(r)\biggr\Vert^q\biggr) \le C_q \E \biggl( \int_0^t \Vert \Psi(r) \Vert^2_{(\mathrm{K}, \mathscr{L}_2(\mathscr{H})}dr \biggr)^\frac q2, \forall t\in [0,T], \forall q\in (1,\infty).
\end{equation}

The standing assumptions on $G$ are given bellow.

\begin{assum} \label{Assum-Uniqness}
	The map $G: \h \longrightarrow \mathscr{L}_2 (\mathrm{K}, \h)\cap \mathscr{L}_2 (\mathrm{K}, \ve)$ satisfies the following: there exists $C>0$ such that   for any $u, v \in \h$
	\begin{align}\label{assum:LipschitsG1}
\| G (u)  - G(v)  \|_{\mathscr{L}_2 ( \rK, \ve)}		+ \| G (u)  - G(v)  \|_{\mathscr{L}_2 ( \rK, \h)} & \leq C | u - v | \notag \\
		\| G(u)\|_{\mathscr{L}_2 (\rK, \ve)} +	\| G(u)\|_{\mathscr{L}_2 (\rK, \h)} & \leq C (1 + \lvert u \rvert). \notag 
	\end{align}
\end{assum}

\begin{Rk} \label{Rem:uniqueness}
	From the above assumption, we infer that there exists $C>0$ such that   for any $u, v \in \ve$
	\begin{displaymath}
		\| G (u)  - G(v)  \|_{\mathscr{L}_2 (\rK, \ve)} \leq \;
		\left \lbrace \begin{array}{cccc}
			\hspace{-2.4cm}C \lvert \rA^{\frac{1}{2}} \left(u - v\right)\rvert & 	 \\
			\hspace{0.15cm} C \lvert u - v\rvert  ~    + ~ \alpha^2   \lvert \rA^{\frac{1}{2}} \left(u  - v\right)\rvert, 	 &
		\end{array}\right.
	\end{displaymath}
 and 
	\begin{displaymath}
		\| G (u)  \|_{\mathscr{L}_2 (\rK, \ve)} \leq 
		\left \lbrace \begin{array}{cccc}
			\hspace{-2.9cm}C \left( 1 + \lvert u \rvert \right) & \\
			\hspace{-2.3cm} C \left(1 + \lvert \rA^{\frac{1}{2}}  u \rvert\right) & 	 \\
			\hspace{0.19cm} C \left(1 + \lvert u \rvert\right)  ~  + ~ \alpha^2   \lvert \rA^{\frac{1}{2}}  u\rvert.  	 & 
		\end{array}\right.
	\end{displaymath}
\end{Rk}

\subsection{The well-posedness of the basic problem \eqref{Eq: ABS-SVE}}
In this subsection we state a well-posedness result of basic problem \eqref{Eq: ABS-SVE}. Since there are several papers which deal with the existence of solutions of LANS-$\alpha$ model we give a rather sketchy proof of this well-posedness result.

	We first give 
	the concept of solutions to \eqref{Eq: ABS-SVE} that we adopt in this paper. 
	
	\begin{Def}\label{Def: Def-Solution}
		Given $\delta\in\{0,1\},\,u_0\in \ve:=D(\rA^\frac12)$, a stochastic process $\ydela:[0,T]\to \ve$ is a strong solution to \eqref{Eq: ABS-SVE} if and only if
		\begin{itemize}
			\item $\ydela$ is $\BF-$adapted, i.e., for each $t$, $\ydela(t)$ is $\mathscr{F}_t$-measurable,
			\item  and $\ydela\in C([0,T];\ve)\cap \el^2([0,T];D(\rA)) $ with probability 1.
			\item Moreover, for all $t\in[0,T]$, $\BP$.a.s.
			\begin{align*}
			\lqq{\langle \ydela(t),\varphi\rangle + \int_0^t\langle \rA\ydela+\lambda_\delta(\alpha)\tilde{B}_\alpha(\ydela,\zdela) + \delta[\tilde{B}_\alpha(\bu,\zdela) + \tilde{B}_\alpha(\ydela,\valph),\varphi] \rangle ds} \\
			&= \langle\ydela(0),\varphi  \rangle + \int_0^t \langle\lambda_\delta^{-1}(\alpha) [B(\bu,\bu) - \tilde{B}_\alpha(u,\valph)] ,\varphi \rangle ds \\
			& \quad + \alpha^\frac12 \lambda_\delta^{-1}(\alpha)\langle \int_0^t \G (\delta \bu + \lambda_\delta(\alpha)\ydela)dW,\varphi  \rangle
			\end{align*}
		\end{itemize}
	\end{Def}
With this definition in mind we now give the following results.

\begin{Prop}
		Let $\delta\in\{0,1\}$ and $\xi\in \ve.$ Assume that $G$ satisfies the Assumption \ref{Assum-Uniqness}.  If the stochastic evolution equation \eqref{Eq: ABS-SVE}  has two strong solutions $\ydela_i,\, i=1,2$ in the sense of Definition \ref{Def: Def-Solution} such that 
		\begin{align*}
			\ydela_i \in \el^p(\Omega;C[0,T];\ve)\cap \el^2([0,T];D(\rA)) \text{ for all $p\in [1,\infty],$},
		\end{align*}
		then with probability 1
		$$\ydela_1 (t)=\ydela_2(t) \text{ for all } t\in [0,T].$$
		That is, the strong solution to the system \eqref{Eq: ABS-SVE} is pathwise unique. 
\end{Prop}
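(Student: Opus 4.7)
Fix $\alpha\in(0,1)$, $\delta\in\{0,1\}$ and set $y:=\ydela_1-\ydela_2$, $z:=y+\alpha^2\rA y=J_\alpha^{-1}y$. By bilinearity of $\tilde{B}_\alpha$ the difference $y$ satisfies $y(0)=0$ and
\begin{align*}
dy + \bigl[\rA y + \lambda_\delta(\alpha)\bigl(\tilde{B}_\alpha(y,\zdela_1)+\tilde{B}_\alpha(\ydela_2,z)\bigr) + \delta\bigl(\tilde{B}_\alpha(\bu,z)+\tilde{B}_\alpha(y,\valph)\bigr)\bigr]dt = \Sigma(t)\,dW,
\end{align*}
with $\Sigma(t):=\alpha^{1/2}\lambda_\delta^{-1}(\alpha)[\G(\delta\bu+\lambda_\delta(\alpha)\ydela_1)-\G(\delta\bu+\lambda_\delta(\alpha)\ydela_2)]$. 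My plan is to apply It\^o's formula to the natural energy $E(t):=(y(t),z(t))=|y(t)|^2+\alpha^2|\rA^{\frac12}y(t)|^2$ (equivalent to the $\ve$-norm for fixed $\alpha$) and then to close a Gronwall inequality.

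Testing the drift against $z$, the Stokes term produces the full dissipation $(\rA y,z)=|\rA^{\frac12}y|^2+\alpha^2|\rA y|^2$, while two of the four nonlinear brackets vanish outright: by self-adjointness of $J_\alpha$, both $(\tilde{B}_\alpha(y,\zdela_1),z)$ and $(\tilde{B}_\alpha(y,\valph),z)$ reduce to an expression of the form $(\tilde{B}(y,\cdot),y)$ which is $0$ by \eqref{Eq:Cancel-Prop-tildeB}. The two surviving brackets $\lambda_\delta(\alpha)(\tilde{B}_\alpha(\ydela_2,z),z)$ and $\delta(\tilde{B}_\alpha(\bu,z),z)$ are precisely what Lemma \ref{Lem:Need-Uniqueness} is designed to control: inspection of its proof shows that it actually bounds $(\tilde{B}_\alpha(u,z),z)$ for an arbitrary $u\in D(\rA)$ (the companion bracket in the statement being identically zero), so plugging in $u=\ydela_2$ and $u=\bu$ yields contributions of the form $\tfrac14|y|^2+\tfrac{\alpha^2}{2}|\rA^{\frac12}y|^2+\tfrac{\alpha^4}{4}|\rA y|^2 + C\alpha^{-2}|u|^2_{D(\rA)}E$. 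For the It\^o correction $\|\Sigma\|^2_{\mathscr{L}_2(\rK,\h)}+\alpha^2\|\Sigma\|^2_{\mathscr{L}_2(\rK,\ve)}$, the Lipschitz estimates in Assumption \ref{Assum-Uniqness} combined with $\|J_\alpha\|_{\mathscr{L}(\h)}\le 1$ produce a bound of the form $C_\alpha|y|^2\le C_\alpha E$: the prefactor $\alpha\lambda_\delta^{-2}(\alpha)$ and the Lipschitz factor $\lambda_\delta^{2}(\alpha)$ collapse to an innocuous $C\alpha$, with no negative power of $\lambda_\delta(\alpha)$ surviving.

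Absorbing the $|\rA^{\frac12}y|^2$ and $|\rA y|^2$ remainders into the dissipation (legitimate for each fixed $\alpha$), I arrive at the differential inequality
\begin{align*}
dE(t) \le C_\alpha\bigl[1+|\ydela_2(t)|^2_{D(\rA)}+|\bu(t)|^2_{D(\rA)}\bigr]E(t)\,dt + 2\bigl(z(t),\Sigma(t)\,dW\bigr).
\end{align*}
To close the estimate I introduce the localising sequence $\tau_N:=\inf\{t\in[0,T]:\int_0^t(|\rA\ydela_1|^2+|\rA\ydela_2|^2+|\rA\bu|^2)ds\ge N\}$, take expectation of the stopped inequality so that the stochastic integral becomes a genuine zero-mean martingale (justified by the postulated $L^p(\Omega;C([0,T];\ve))\cap L^2(0,T;D(\rA))$ regularity), and invoke Gronwall's lemma together with $E(0)=0$ to conclude $\E\, E(t\wedge\tau_N)=0$ for all $t$ and $N$. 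Letting $N\to\infty$, which is licit because $\tau_N\uparrow T$ almost surely thanks to the $L^2(0,T;D(\rA))$ integrability of $\ydela_1,\ydela_2$ and $\bu$, forces $y\equiv 0$ in $C([0,T];\ve)$, i.e.\ pathwise uniqueness. The main technical obstacle is the algebraic one: the Gronwall driver is integrable only because the $|\rA y|^2$ and $|\rA^{\frac12}y|^2$ remainders from Lemma \ref{Lem:Need-Uniqueness} are absorbable by the dissipation $|\rA^{\frac12}y|^2+\alpha^2|\rA y|^2$, and the finely tuned exponents of $\alpha$ in that lemma are exactly what make the absorption possible.
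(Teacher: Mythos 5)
Your proposal is correct and follows essentially the same route as the paper: an It\^o energy estimate for the difference $y=\ydela_1-\ydela_2$ in the norm $|y|^2+\alpha^2|\rA^{\frac12}y|^2$ tested against $J_\alpha^{-1}y$, cancellation of the two brackets of the form $(\tilde{B}_\alpha(y,\cdot),J_\alpha^{-1}y)$, control of the two surviving brackets through the $D(\rA)$-norms of $\ydela_2$ and $\bu$ (you route this through Lemma \ref{Lem:Need-Uniqueness}, while the paper re-derives the same bounds from \eqref{2.15-B} after normalising $\alpha=\lambda_\delta(\alpha)=1$ and treating only $\delta=1$), followed by localisation and Gronwall. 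The one step stated too loosely is the last: since the Gronwall driver $1+|\ydela_2|^2_{D(\rA)}+|\bu|^2_{D(\rA)}$ is random and pointwise unbounded, one cannot literally take expectations and then apply the deterministic Gronwall lemma; as in the paper one should first multiply by the exponential weight $e^{-C\int_0^t(|\rA\ydela_2|^2+|\rA\bu|^2)\,ds}$ (or invoke a stochastic Gronwall lemma) and only then take expectations --- your stopping time already makes this weight deterministically bounded below on $[0,\tau_N]$, so the conclusion goes through unchanged.
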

\begin{proof}
	The proof of the proposition follows the same lines as in \cite{CARABALLO1},  but for the sake of completeness we give the detail.
	
Since the system reduces to the $2D$ Stochastic LANS-$\alpha$ model when $\delta=0$ and the proof of the uniqueness result can be done as in   \cite{CARABALLO1}. Hence, we will focus on the case $\delta = 1.$ 
Since the parameter $\alpha$ does not play an important role for the proof of uniqueness we can and will assume $\alpha =1$ and $\lambda_\delta(\alpha) = 1$.
With these in mind, let $\mathbf{y}_1$ and $\mathbf{y}_2 $ be two solutions the system \eqref{DCNS-alpha-delta}.
We set 
\begin{align*}
	\mathbf{y} &= \mathbf{y}_1 - \mathbf{y}_2\\
	\mathbf{z} &= \mathbf{y} + \rA\mathbf{y}\\
	\mathbf{z}_i&=\mathbf{y}_i+\rA\mathbf{y}_i, \quad \text{for }   i = 1, 2.
\end{align*}
Then, using the bilinarity of $\tilde{B}_\alpha,\, B$ we see that $\mathbf{y}$ satisfies:
\begin{align*} 
	\left\lbrace
	\begin{array}{ll}
		\lqq{d\mathbf{y} +\left[\rA\mathbf{y} + \tilde{B}_1(\mathbf{y},\mathbf{z}_1)+\tilde{B}_1(\mathbf{y}_2,\mathbf{z}) + \tilde{B}_1(\mathbf{u},\mathbf{z})+\tilde{B}_1(\mathbf{y},J_1^{-1}\mathbf{u})\right]} \\
		&= \left[\mathbf{G}_1(\mathbf{u}+\mathbf{y}_1)-\mathbf{G}_1(\mathbf{u}+\mathbf{y}_2)\right]dW\\
		\mathbf{y}(t=0) &= 0.
	\end{array} \right.
\end{align*}
Recall that $\tilde{B}_1(x,y) = (I+A)^{-1}\tilde{B}(x,y) $ and $\mathbf{G}_1(x) = (I+A)^{-1}G(x)$.
Let $N>0$ and $\tau_N$ be the stopping time defined by
\begin{align*}
	\tau_N=\inf\left\{t\in[0,T]: |\rA^\frac12\mathbf{y}_1(t)|>N \right\} \wedge \inf\left\{t\in[0,T]: |\rA^\frac12\mathbf{y}_2(t)|>N \right\}.
\end{align*}
Let $t\in(0,T]$ be fixed. 
 
Applying the It\^o formula to $\mathbf{y}$ and the functional $\varphi(x) = |x|^2 + |\rA^\frac12 x|^2 $ for $x\in\ve$, yields
\begin{align*}
	\lqq{d\left( |\mathbf{y}|^2+|\rA^\frac12\mathbf{y}|^2 \right) + \left[|\rA^\frac12\mathbf{y}|^2+|\rA\mathbf{y}|^2 +( \mathbf{y},\tilde{B}(\mathbf{y}_2,\mathbf{z}) + \tilde{B}(\mathbf{u},\mathbf{z}) ) \right]dt}\\
	=&\Vert G(\mathbf{u}+\mathbf{y}_1)-G(\mathbf{u}+\mathbf{y}_2)\Vert^2_{\mathscr{L}_2(\rK,\h)}dt + \langle \mathbf{y},G(\mathbf{u}+\mathbf{y}_1)-G(\mathbf{u}+\mathbf{y}_2)dW \rangle,
\end{align*}
where we used the facts that for $x\in \h$
\begin{align*}
	\varphi'(\mathbf{y})[(I+\rA^{-1})x] &= 2 ( \mathbf{y}+\rA\mathbf{y}, (I+\rA)^{-1}x ) = 2\red{ (\mathbf{y},x) } \\
	\frac12 \varphi''(\mathbf{y})[x,x] &= ( x+\rA x, (I+\rA)^{-1}x ) = |x|^2,
\end{align*}
and the cancellation property \eqref{Eq:Cancel-Prop-tildeB}.
Note that thanks to \eqref{2.15-B}, the continuous embedding $\h\subset D(\rA^\frac12)$ and the Young inequality we deduce that there exists a constant $C_0>0$ such that 
\begin{align}
	\lqq{2|( \tilde{B}(\mathbf{y}_2+\mathbf{u},\mathbf{z}),\mathbf{y} )|} \nonumber\\
	&\leq \left[|\rA^\frac12 \mathbf{y}|^2 + |\rA\mathbf{y}|^2\right] + C_0\left[ |\rA\mathbf{y}_2|^2 + |\rA\mathbf{u}|^2 \right]\left[ |\rA^\frac12\mathbf{y}_2|^2+|\rA\mathbf{u}|^2 + |\mathbf{y}|^2 \right] . \label{Eq:uniq-Ito}
\end{align}
Now, we let 
\begin{align*}
	\Psi(t)  = e^{C_0\int_0^t[|\rA\mathbf{y}_2(s)|^2 + |\rA\mathbf{u}(s)|^2]ds},\quad  t\in [0,T],
\end{align*}
and  apply the It\^o formula to the real-valued process 
\begin{align*}
	x(t) = \Psi(t)\varphi(\mathbf{y}(t)) = \Psi(t)\left[|\mathbf{y}(t)|^2 + |\rA^\frac12\mathbf{y}(t)|^2\right], \; t\in [0,T].
\end{align*}
This procedure along with \eqref{Eq:uniq-Ito} and the Lipschitz continuity of $G$ yield
\begin{align*}
	\lqq{x(t\wedge \tau_N) + 2\int_0^{t\wedge \tau_N}\psi(s)\left[ |\rA^\frac12\mathbf{y}(s)|^2 + |\rA\mathbf{y}(s)|^2 \right]ds} \\
	&\leq x(0) - C_0\int_0^{t\wedge\tau_N}\Psi(s)\left[ |\rA\mathbf{y}_2(s)|^2 + |\rA\mathbf{u}(s)|^2 \right]\left[ |\rA^\frac12\mathbf{y}(s)|^2+|\mathbf{y}(s)|^2\right]ds \\
	& \,{ }+\int_0^{t\wedge\tau_N}\Psi(s)|( \tilde{B}(\mathbf{y}_2+\mathbf{u},\mathbf{z}),\mathbf{y} )|ds \\
	& { }+\int_0^{t\wedge\tau_N}\Psi(s)\Vert G(\mathbf{u}+\mathbf{y}_1)-G(\mathbf{u}+\mathbf{y}_2)  \Vert^2_{\mathscr{L}(\rK,\h)}ds \\
	& { } +\int_0^{t\wedge\tau_N}\Psi(s)\langle \mathbf{y}(s),G(\mathbf{u}+\mathbf{y}_1)-G(\mathbf{u}+\mathbf{y}_2) dW\rangle \\
	&\leq x(0) + C\int_0^{t\wedge\tau_N}\Psi(s)|\mathbf{y}_1-\mathbf{y}_2|_{\h}^2ds  \\
	& { }+ \int_0^{t\wedge\tau_N}\Psi(s)\left[ |\rA^\frac12\mathbf{y}(s)|^2 + |\rA\mathbf{y}(s)|^2 \right]ds \\
	& { }+ \int_0^{t\wedge\tau_N}\Psi(s)\langle \mathbf{y}(s), G(\mathbf{u}+\mathbf{y}_1) - G(\mathbf{u}+\mathbf{y}_2)dW  \rangle. 
\end{align*}
Observe that, 
\begin{align*}
	|\mathbf{y}_1-\mathbf{y}_2|^2 = |\mathbf{y}|^2 \leq |\mathbf{y}|^2 + |\rA^\frac12\mathbf{y}|^2.
\end{align*}
Hence, by taking the mathematical expectation and using the fact that the stopped stochastic integral in the above inequalities are a zero mean martingale we obtain that 
\begin{align*}
	\lqq{\mathbb{E}x(t\wedge\tau_N) + \mathbb{E}\int_0^{t\wedge\tau_N}\Psi(s)\left[ |\rA^\frac12\mathbf{y}(s)|^2 + |\rA\mathbf{y}(s)|^2 \right]ds} \\
	&\leq \mathbb{E}x(0) + C\int_0^{t\wedge\tau_N}\mathbb{E}x(s\wedge\tau_N)ds.
\end{align*}
By applying the Gronwall Lemma and the fact that $x(0) = 0.$
We see that
\begin{align*}
	\mathbb{E}x(t\wedge\tau_N) = 0, \quad t\in[0,T].
\end{align*}
Since $x\geq0$ and $\Psi>0$ we see that for all $t\in[0,T]$ a.e.
\begin{align*}
	\mathbf{y}_1(t) = \mathbf{y}_2(t), \quad\text{in }\quad \ve.
\end{align*}
From the fact $\mathbf{y}_i\in C([0.T];D(\rA^\frac12))$ a.e.
We finally conclude that a.e. for all $t\in[0,T]$
\begin{align*}
	\mathbf{y}_1(t) = \mathbf{y}_2(t),
\end{align*}
which completes the proof of the proposition.
 
\end{proof}

\begin{Thm}\label{THM:Exist-1}
	Let $\delta\in\{0,1\}$ and $\xi\in \ve.$ Assume that $G$ satisfies the Assumption \ref{Assum-Uniqness}. Then the stochastic evolution equation \eqref{Eq: ABS-SVE} has a unique solution $\ydela$ in the sense of Definition \ref{Def: Def-Solution} such that 
\begin{align*}
 \ydela\in \el^p(\Omega;C[0,T];\ve)\cap \el^2([0,T];D(\rA)) \text{ for all $p\in [1,\infty).$} 
\end{align*}
\end{Thm}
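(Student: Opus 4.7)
The plan follows the standard Faedo--Galerkin scheme: a finite-dimensional approximation, uniform a priori estimates that exploit the cancellation structure of $\tilde{B}_\alpha$, tightness and Skorokhod representation, passage to the limit, and a final upgrade to a strong solution by the pathwise uniqueness already established in the preceding proposition.

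\emph{Step 1 (Galerkin scheme).} Let $\h_n=\mathrm{span}\{e_1,\dots,e_n\}$ and let $\pi_n$ denote the orthogonal $\h$-projection onto $\h_n$. The projected SDE for $\ydela_n\in\h_n$ has locally Lipschitz coefficients in the finite-dimensional norm, so classical SDE theory yields a local solution that will become global once the estimates of Step 2 are in hand.

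\emph{Step 2 (A priori estimates).} The natural energy functional is $\|y\|_\alpha^2=|y|^2+\alpha^2|\rA^{1/2}y|^2$, whose associated dual pairing is with $z=J_\alpha^{-1}y=y+\alpha^2\rA y$. Applying It\^o's formula to $\|\ydela_n\|_\alpha^2$ produces the following contributions. The Stokes term generates the dissipative integral $-2\int_0^t[|\rA^{1/2}\ydela_n|^2+\alpha^2|\rA\ydela_n|^2]\,ds$. The principal nonlinearity $\lambda_\delta(\alpha)(\tilde{B}_\alpha(\ydela_n,\zdela_n),\zdela_n)=\lambda_\delta(\alpha)\langle\tilde{B}(\ydela_n,\zdela_n),\ydela_n\rangle$ vanishes by \eqref{Eq:Cancel-Balpha}. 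For $\delta=1$ the linear perturbations $(\tilde{B}_\alpha(\bu,\zdela_n),\zdela_n)+(\tilde{B}_\alpha(\ydela_n,\valph),\zdela_n)$ are bounded exactly by Lemma \ref{Lem:Need-Uniqueness}, which absorbs a fraction of the dissipation and leaves a time-integrable Gr\"onwall factor driven by $|\bu|_{D(\rA)}^2$. The deterministic source $-\lambda_\delta^{-1}(\alpha)\delta(\tilde{B}_\alpha(\bu,\valph)-B(\bu,\bu),\zdela_n)$ is handled by \eqref{Eq:Est-DiffBandtildeB}, producing contributions controlled by $|B(\bu,\bu)|$ and $|\rA\bu|^2$. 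Finally, the It\^o correction $\alpha\lambda_\delta^{-2}(\alpha)\|\G(\delta\bu+\lambda_\delta(\alpha)\ydela_n)\|_{\mathscr{L}_2(\rK,\ve)}^2$ grows at most linearly in $\|\ydela_n\|_\alpha^2$ thanks to Remark \ref{Rem:uniqueness} combined with the smallness $\alpha\lambda_\delta^{-2}(\alpha)\leq 2$ of Remark \ref{Rem:Lambdaalpha}. Collecting these terms, the BDG inequality and the Gr\"onwall lemma yield, uniformly in $n$ and for every $p\in[1,\infty)$,
\begin{equation*}
\E\sup_{t\in[0,T]}\|\ydela_n(t)\|_\alpha^{2p}+\E\Bigl(\int_0^T|\rA\ydela_n(s)|^2\,ds\Bigr)^p\leq C_p,
\end{equation*}
which also globalises the local Galerkin solution.

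\emph{Step 3 (Compactness, Skorokhod, passage to the limit).} A standard fractional time-regularity estimate for $\ydela_n$ (in $W^{\gamma,2}(0,T;\ve')$ for some $\gamma\in(0,1/2)$), combined with the spatial bounds above, makes the laws of $(\ydela_n)$ tight in $C([0,T];\h)\cap L^2(0,T;\ve)$ by an Aubin--Lions-type argument. Prokhorov's theorem followed by the Skorokhod representation produces an almost surely convergent subsequence on a new probability space; the Lipschitz continuity of $G$ together with the bilinear estimates of Section \ref{sec:2} allow us to pass to the limit in every term of the projected equation, yielding a martingale solution.

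\emph{Step 4 (Strong solution).} Since pathwise uniqueness holds in the relevant integrability class, the Gy\"ongy--Krylov characterization implies that $(\ydela_n)$ itself converges in probability on the original filtered space $\mathscr{U}$, giving a probabilistically strong solution. The integrability $\ydela\in\el^p(\Omega;C([0,T];\ve))\cap L^2(\Omega\times[0,T];D(\rA))$ for all $p\in[1,\infty)$ follows by Fatou from the Step 2 bounds. The main obstacle throughout is securing uniform-in-$n$ estimates that are simultaneously valid for $\delta=0$ and $\delta=1$ and strong enough to yield arbitrary $L^p(\Omega)$-moments; the cancellation identity \eqref{Eq:Cancel-Balpha}, the sharp bound of Lemma \ref{Lem:Need-Uniqueness} tailored precisely to the $\delta=1$ perturbations, and the Remark \ref{Rem:Lambdaalpha} smallness $\alpha\lambda_\delta^{-2}(\alpha)\leq 2$ are exactly what allow the energy inequality to close.
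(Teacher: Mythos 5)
Your construction is sound in outline, but it takes a genuinely different (and much longer) route than the paper. The paper does not run a Faedo--Galerkin argument at all: for $\delta=0$ it observes that \eqref{Eq: ABS-SVE} is exactly the stochastic LANS-$\alpha$ system \eqref{Eq: ABS-LANS-ALPHA} and simply cites \cite{CARABALLO1} for existence/uniqueness in $\el^4(\Omega;C([0,T];\ve))\cap \el^2(0,T;D(\rA))$ and \cite{Gabriel} for the upgrade to all $p\ge 1$; for $\delta=1$ it notes that the affine transformation $\mathbf{y}^{\alpha,1}=({\bf y}^{\alpha,0}-\bu)/\lambda_\delta(\alpha)$ of the $\delta=0$ solution and the deterministic NSE solution $\bu$ solves \eqref{Eq: ABS-SVE}, so existence and the stated integrability are inherited immediately, and uniqueness comes from the preceding proposition. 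What the paper's reduction buys is brevity and, more importantly, the explicit identification $\mathbf{y}^{\alpha,1}=(\bu^\alpha-\bu)/\lambda_\delta(\alpha)$ that is the whole point of the unifying framework (it is what turns the LDP for \eqref{Eq: ABS-SVE} with $\delta=1$ into an MDP for \eqref{Eq: LANS-ALPHA}). What your direct approach buys is self-containedness and uniformity in $\delta$, at the cost of re-proving the content of \cite{CARABALLO1} and \cite{Gabriel}; if you follow it, note that the constants in your Step 2 bounds necessarily depend on $\alpha$ (e.g.\ the $\alpha^{-2}$ factor in Lemma \ref{Lem:Need-Uniqueness} and the extraction of $\int_0^T\lvert\rA\ydela\rvert^2\,ds$ from the dissipation term $\alpha^2\lvert\rA\ydela\rvert^2$), which is harmless for this fixed-$\alpha$ theorem but means these are not the uniform-in-$\alpha$ estimates needed later (those are established separately in Theorem \ref{THM:SCNS-alpha-delta1}); and you should still record, via pathwise uniqueness, that your $\delta=1$ solution coincides with $(\bu^\alpha-\bu)/\lambda_\delta(\alpha)$, since the deviation results rely on that identification.
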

\begin{proof}
Observe that if $\delta = 0$, then the problem \eqref{Eq: ABS-SVE} reduces to the stochastic system \eqref{Eq: ABS-LANS-ALPHA}. Under the Assumption \ref{Assum-Uniqness} it was proved in \cite{CARABALLO1} that \eqref{Eq: ABS-LANS-ALPHA} has a unique solution ${\bf y}^{\alpha,0}$ satisfying ${\bf y}^{\alpha,0}\in \el^4(\Omega;C([0,T];\ve))\cap \el^2(0,T;D(\rA))$.
	The fact that ${\bf y}^{\alpha,0}\in \el^p(\Omega;C([0,T];\ve))\cap \el^2(0,T;D(\rA))$ for all $p\geq1$ is proved in \cite{Gabriel}.
	
	Next, we recall that the deterministic evolution equation \eqref{Eq: ABS-NSE} with initial data $\xi\in \ve$ has a unique strong solution, $\bu\in C([0,T];\ve)\cap \el^2(0,T;D(\rA)).$
	Note that $\bu$ is deterministic .
	If $\delta = 1, $ then, as discussed above, the stochastic process ${\bf y}^{\alpha,1} = \frac{{\bf y}^{\alpha,0}-\bu}{\lambda_\delta(\alpha)}\in \el^p(\Omega;C([0,T];\ve))\cap \el^2(0,T;D(\rA))$ satisfies the problem \eqref{Eq: ABS-SVE}.
\end{proof}
\begin{Rem}\label{Rem-Exist-G2}
	The existence and uniqueness of a strong solution to \eqref{Eq: ABS-SVE}  enables us to define a Borel measurable map $\Gamma^{\alpha,\delta}_\xi:C([0,T];\mathrm{K}) \to C([0,T];\h)\cap \el^2(0,T; D(\rA^\frac12)) $ such that $\Gamma^{\alpha,\delta}_\xi(W)$ is the unique solution to \eqref{Eq: ABS-SVE} on the filtered probability space $(\Omega, \mathscr{F}, \mathbb{F}, \mathbb{P})$ with the Wiener process $W$.  
\end{Rem}

\section{Analysis of the  controlled evolution equations}
\label{sec:4 Analysis}
In order to describe the rate functions associated to the LDP and MDP results, we also need to introduce few additional notations and two auxiliary problems: the stochastic and deterministic controlled evolution equations. 

For fixed $M>0$ we set
\[\mathcal{A}_M=\Big\{h \in L^2(0, T; \mathrm{K}): \int_0^T \rVert h(r)\lVert^2_{\mathrm{K}} dr \leq M\Big\}.\]
The set $\mathcal{A}_M$, endowed with the weak topology 
\begin{equation}\label{Eq:Metric-SM}
d_1(h, k)=\sum_{k\geq 1} \frac1{2^k}
\big|\int_0^T \big(h(r)-k(r), \tilde{e}_k(r)\big)_{\mathrm{K}} dr \big|,
\end{equation}
where
$ ( \tilde{e}_k , k\geq 1)$ is an  orthonormal basis for
$L^2(0, T ;  \mathrm{K})$,  is a
Polish  (complete separable metric)  space, see \cite{AB+PD+VM}.

\noindent We also introduce the
class
$\mathscr{A}$ as the  set of $\mathrm{K}$-valued
$(\mathscr{F}_t)-$predictable stochastic processes $h$ such that $\int_0^T
\lVert h(r)\rVert^2_{\mathrm{K}} dr < \infty, \; $ a.s.
For $M>0$ we set
\begin{equation} \label{AM}
\mathscr{A}_M=\{h\in \mathscr{A}: h \in
\mathcal{A}_M \; a.s.\}.
\end{equation} 
\subsection{Analysis of the stochastic controlled evolution equations}
With the above  notations at hand we now consider the stochastic controlled equation:
\begin{align}
& d \mathbf{y}^{\alpha, \delta} + A \mathbf{y}^{\alpha, \delta} dt  +   \lambda_{\delta}  (\alpha) \tilde{B}_{\alpha}(\mathbf{y}^{\alpha,\delta}, \mathbf{z}^{\alpha, \delta}) dt + \delta \tilde{B}_{\alpha}(\bu, \mathbf{z}^{\alpha, \delta}) dt + \delta   \tilde{B}_{\alpha}(\mathbf{y}^{\alpha, \delta},J_\alpha^{-1}\bu) dt   \notag \\ 
& \hspace{0.9cm} +  \delta \lambda_{\delta}^{-1} (\alpha) \left[\tilde{B}_{\alpha}(\bu, J_{\alpha}^{-1} \bu)  - B(\bu, \bu)\right] dt \notag\\
& = \G  (\delta \bu + \lambda_{\delta} (\alpha) \mathbf{y}^{\alpha, \delta} ) h (t) dt + \alpha^{\frac{1}{2}} \lambda_{\delta}^{-1} (\alpha) \G  (\delta \bu + \lambda_{\delta} (\alpha) \mathbf{y}^{\alpha, \delta}) dW, \label{SCNS-alpha-delta}
\end{align}
where $h\in L^2(0,T; \mathrm{K})$.

We now need   to prove the existence and uniqueness of \eqref{SCNS-alpha-delta} and derive uniform estimates for its solution.  This will be the subject of the following theorem.
	\begin{Thm}\label{THM:SCNS-alpha-delta1}
			Let $\delta\in \{0,1\}$, $\xi \in {D(\rA^\frac12)}$, $p\in [1,\infty)$. Let us also fix $M>0$ and $h\in \mathscr{A}_M$. If Assumption \ref{Assum-Uniqness} is satisfied,  then  the stochastic controlled system 	
	\eqref{SCNS-alpha-delta} has a unique solution $\mathbf{y}_{h}^{\alpha, \delta} \in C([0, T]; \ve ) \cap \el^2 (0, T;D(\rA))$ such that 
		$$\mathbf{y}_{h}^{\alpha, \delta}= \Gamma^{\alpha,\delta}_\xi \biggl(W + \alpha^{-\frac12} \lambda_\delta(\alpha )\int_0^{\cdot} h(r)dr  \biggr) .$$
	
		Furthermore, then there exists a constant $ C>0$ (which may depend on $p$) 
 such that for any $\alpha \in (0,1)$ we have 
		\begin{equation} \label{Estim}
\begin{split}
	&  \mathbb{E} \sup_{t \in [0, T]} \Vert \mathbf{y}_{h}^{\alpha, \delta} (t) \Vert_{\alpha}^{2p} +    \mathbb{E} \int_{0}^{t} \Vert \mathbf{y}_{h}^{\alpha, \delta} (t) \Vert_{\alpha}^{2p - 2} \Vert\mathbf{y}_{h}^{\alpha, \delta} \Vert_{\ve}^2 ds  \\
	&  \leq C\left( 1+  \lvert \rA^\frac12  \mathbf{y}_{h}^{\alpha, \delta} (0)\rvert^2 + CM^\frac12 T^\frac12\left(1+ \delta^{2p}\lvert \rA^\frac12 \bu \rvert^{2p} \right)\right)e^{\int_0^T\Phi_\delta (s) ds}\;\mathbb{P}\text{-}a.s.,
\end{split}
	\end{equation}
	where 
	\begin{equation*}
	\Phi_\delta:= 1+ \lVert h \rVert_{\rK}+\delta^2 [\lvert \rA\bu\rvert^2+\lvert \bu\rvert^2 ] + \lvert B(\bu,\bu) \rvert^2+ \lvert \rA \bu \rvert^2\lvert \rA^\frac12 \bu\rvert^2.
	\end{equation*}
\end{Thm}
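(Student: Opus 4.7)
The plan is in two parts: (a) reduce existence and uniqueness for \eqref{SCNS-alpha-delta} to Theorem \ref{THM:Exist-1} via a Girsanov translation of the driving noise, and (b) establish the a priori bound \eqref{Estim} by applying It\^o's formula to $\|\ydela\|_\alpha^{2p}$ and dissecting each drift term with the estimates of Section \ref{sec:2}.

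For (a), set $\tilde W(t) := W(t) + \alpha^{-1/2}\lambda_\delta(\alpha)\int_0^t h(r)\,dr$; a direct inspection shows that \eqref{SCNS-alpha-delta} is obtained from \eqref{Eq: ABS-SVE} by the substitution $W \mapsto \tilde W$, the shift producing precisely the deterministic forcing $\G(\delta\bu + \lambda_\delta(\alpha)\ydela)h(t)\,dt$. Because $h \in \mathscr A_M$, Novikov's condition is trivially satisfied and Girsanov's theorem yields a measure $\mathbb Q \sim \mathbb P$ under which $\tilde W$ is a cylindrical Wiener process on $\rK$. Invoking Theorem \ref{THM:Exist-1} (and Remark \ref{Rem-Exist-G2}) on $(\Omega,\mathscr F,\mathbb F,\mathbb Q)$ with driving noise $\tilde W$ gives the unique strong solution $\ydela = \Gamma^{\alpha,\delta}_\xi(\tilde W)$; equivalence of $\mathbb P$ and $\mathbb Q$ transfers all path regularity back to the original filtered probability space.

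For (b), apply It\^o's formula to $F(\mathbf{y}):= \|\mathbf{y}\|_\alpha^{2p} = ((\mathbf{y},J_\alpha^{-1}\mathbf{y}))^p$, using $F'(\mathbf{y})\cdot v = 2p\|\mathbf{y}\|_\alpha^{2p-2}(v,\mathbf{z})$. The dissipative contribution $-2p\|\ydela\|_\alpha^{2p-2}\bigl(|\rA^{1/2}\ydela|^2+\alpha^2|\rA\ydela|^2\bigr)$ is moved to the left side. The principal nonlinearity is annihilated by the identity $(\tilde B_\alpha(\ydela,\zdela),\zdela) = (\tilde B(\ydela,\zdela),J_\alpha\zdela) = (\tilde B(\ydela,\zdela),\ydela) = 0$, which combines $J_\alpha\zdela=\ydela$ with \eqref{Eq:Cancel-Prop-tildeB}. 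The linear-in-$\mathbf{y}$ pair $\delta[\tilde B_\alpha(\bu,\zdela)+\tilde B_\alpha(\ydela,\valph)]$ is treated by Lemma \ref{Lem:Need-Uniqueness}, yielding a piece absorbable by the left together with a Gronwall term proportional to $\delta^2|\rA\bu|^2\|\ydela\|_\alpha^{2p}$. The residual drift $-\delta\lambda_\delta^{-1}(\alpha)[\tilde B_\alpha(\bu,\valph)-B(\bu,\bu)]$ is bounded using \eqref{Eq:Est-DiffBandtildeB}; the factor $\lambda_\delta^{-1}(\alpha)$ combines with $\alpha^k$ according to Remark \ref{Rem:Lambdaalpha} so that all prefactors stay bounded in $\alpha$, and Young's inequality then produces the $|B(\bu,\bu)|^2$ and $|\rA\bu|^2|\rA^{1/2}\bu|^2$ entries of $\Phi_\delta$. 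The control drift $\G(\delta\bu+\lambda_\delta(\alpha)\ydela)h$ is handled via Assumption \ref{Assum-Uniqness}, Cauchy--Schwarz in $\rK$, and Young's inequality, yielding a $\|h(t)\|_\rK$ coefficient in the Gronwall exponent and a forcing contribution bounded by $CM^{1/2}T^{1/2}(1+\delta^{2p}|\rA^{1/2}\bu|^{2p})$. The It\^o correction $p\alpha\lambda_\delta^{-2}(\alpha)\|\G(\cdot)\|^2_{\mathscr L_2(\rK,\ve)}\|\ydela\|_\alpha^{2p-2}$ stays tame because $\alpha\lambda_\delta^{-2}(\alpha)\le 2$ and $\|\G(u)\|^2_{\mathscr L_2(\rK,\ve)} \le C(1+|u|^2)$.

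Taking $\sup_{t\le T}$, applying the BDG inequality to the stochastic integral to bound it by $\tfrac12 \mathbb E\sup_{t\le T}\|\ydela\|_\alpha^{2p}$ plus a time integral that can be merged with the Gronwall forcing, and invoking a (stochastic) Gronwall lemma then closes \eqref{Estim}. The main obstacle is the bookkeeping: tracking the powers of $\alpha$ and $\lambda_\delta^{-1}(\alpha)$ so that every constant is uniform in $\alpha\in(0,1)$ and correctly identifying $\Phi_\delta$. Remark \ref{Rem:Lambdaalpha} is the linchpin that keeps the small-noise multiplier $\alpha^{1/2}\lambda_\delta^{-1}(\alpha)$ and the residual correction $\delta\lambda_\delta^{-1}(\alpha)$ from blowing up, while the cancellation $J_\alpha\zdela=\ydela$ is what makes Lemma \ref{Lem:Need-Uniqueness} applicable to the full $\delta$-linear block.
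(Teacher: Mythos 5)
Your overall architecture coincides with the paper's: Part (a) is exactly the paper's Part I (Novikov--Girsanov translation of $W$ plus Theorem \ref{THM:Exist-1} and Remark \ref{Rem-Exist-G2}), and Part (b) follows the same route (It\^o's formula for $\lVert\cdot\rVert_\alpha^{2p}$, the cancellation $(\tilde B_\alpha(\ydela,\zdela),J_\alpha^{-1}\ydela)=\langle\tilde B(\ydela,\zdela),\ydela\rangle=0$, the estimate \eqref{Eq:Est-DiffBandtildeB} for the residual drift, Assumption \ref{Assum-Uniqness} for the control and It\^o-correction terms, BDG and Gronwall). However, there is one concrete misstep: you propose to handle the linear block $\delta[\tilde B_\alpha(\bu,\zdela)+\tilde B_\alpha(\ydela,\valph)]$ with Lemma \ref{Lem:Need-Uniqueness}. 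That lemma's conclusion contains the term $\alpha^{-2}C\lvert u\rvert^2_{D(\rA)}\bigl[\lvert y\rvert^2+\alpha^2\lvert\rA^{\frac12}y\rvert^2\bigr]$, and the piece $C\lvert\rA\bu(s)\rvert^2\,\lvert\rA^{\frac12}\ydela\rvert^2$ it produces can neither be absorbed by the dissipation (its coefficient $\lvert\rA\bu(s)\rvert^2$ is only in $L^1(0,T)$, not bounded by $1$) nor be controlled by $\lVert\ydela\rVert_\alpha^2=\lvert\ydela\rvert^2+\alpha^2\lvert\rA\ydela\rvert^2$ without paying the factor $\alpha^{-2}$. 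Carrying this into Gronwall would put $\alpha^{-2}\int_0^T\lvert\rA\bu\rvert^2\,ds$ in the exponent, destroying the uniformity in $\alpha\in(0,1)$ that is the entire point of \eqref{Estim} (and of the subsequent limit $\alpha\to 0$). Lemma \ref{Lem:Need-Uniqueness} is used in the paper only for pathwise uniqueness at \emph{fixed} $\alpha$; it does not deliver the Gronwall term $\delta^2\lvert\rA\bu\rvert^2\lVert\ydela\rVert_\alpha^{2p}$ you assert.

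The fix, which is what the paper does, is twofold. First, the second member of the block vanishes identically: $(\tilde B_\alpha(\ydela,\valph),J_\alpha^{-1}\ydela)=\langle\tilde B(\ydela,J_\alpha^{-1}\bu),\ydela\rangle=0$ by \eqref{Eq:Cancel-Prop-tildeB} (this is the computation at the end of the proof of Lemma \ref{Lem:Need-Uniqueness}), so only $\delta(\tilde B_\alpha(\bu,\zdela),J_\alpha^{-1}\ydela)$ needs estimating. Second, that term is bounded via Lemma \ref{lemma3.4}, i.e. \eqref{Eq:linear-pert}, which has the $\alpha$-uniform structure $C\lvert\rA\bu\rvert\,\lvert\ydela\rvert\bigl(\lvert\rA^{\frac12}\ydela\rvert+\alpha\lvert\rA\ydela\rvert\bigr)$: Young's inequality then gives a fraction of the dissipation plus $C\delta^2\lvert\rA\bu\rvert^2\lvert\ydela\rvert^2$, which is exactly the $\delta^2\lvert\rA\bu\rvert^2$ entry of $\Phi_\delta$. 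With this substitution the rest of your argument goes through as written.
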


\begin{proof}
		Let $\delta\in \{0,1\}$, $\xi \in {D(\rA^\frac12)}$, $p\in [1,\infty)$. Let us also fix $M>0$ and $h\in \mathscr{A}_M$.
	The proof of the theorem is divided into two parts. 
	
\noindent 		\textrm{Part I: Well-posedness of problem \eqref{SCNS-alpha-delta}.}
		Since $h \in \mathscr{A}_M$ we have 
	\begin{equation*}
	\mathbb{E}\exp\left(\frac12 \alpha^{-1} \lambda^2_\delta(\alpha)\int_0^T \lVert h(r)\rVert^2 dr \right) <\infty. 
	\end{equation*}
	Thus, by Girsanov's theorem there exists a probability measure $\mathbb{P}_h$ such that 
	\begin{equation*}
	\frac{d\mathbb{P}_h}{d\mathbb{P}}=\exp\left(\frac12 \alpha^{-1}\lambda_\delta(\alpha)^2\int_0^T \lVert h(r)\rVert^2_{\mathrm{K}} dr -\alpha^{-\frac12}\lambda_\delta(\alpha) \int_0^T h(r) dW(r)\right),
	\end{equation*}
	and the stochastic process $\tilde{W}(\cdot):=W(\cdot) +\alpha^{-\frac12}\lambda(\alpha)\int_0^{\cdot} h(r)dr $ defines a cylindrical Wiener process evolving on $\mathscr{H}_0$ and defined  on the filtered probability space $(\Omega, \mathscr{F},\mathbb{F},\mathbb{P}_h)$. We now infer from  Theorem \ref{THM:Exist-1} that on $(\Omega, \mathscr{F},\mathbb{F},\mathbb{P}_h)$ the problem \eqref{SCNS-alpha-delta} with driving noise $\tilde{W}$ admits a unique strong solution $\mathbf{y}_{h}^{\alpha, \delta}$. By Remark \ref{Rem-Exist-G2} we have $\mathbf{y}_{h}^{\alpha, \delta}=\Gamma^{\alpha,\delta}_\xi(\tilde{W})$ on $(\Omega, \mathscr{F},\mathbb{F},\mathbb{P}_h)$ which reads
	$$ \mathbf{y}_{h}^{\alpha, \delta}=\Gamma^{\alpha,\delta}_\xi \left(W(\cdot) +\alpha^{-\frac12}\lambda(\alpha)\int_0^{\cdot} h(r)dr\right) \text{ on } (\Omega, \mathscr{F},\mathbb{F},\mathbb{P}).$$
	
		\noindent \textrm{Part II: Proof of the uniform estimates \eqref{Estim}}
	
The proof of the estimate \eqref{Estim} relies on the application of the It\^o formula to the functional $N_{\alpha}(x):=\lVert x  \rVert_{\alpha}^2$ and the It\^o process $\mathbf{y}^{\alpha, \delta}$. Before proceeding further let us observe that  $N_{\alpha}(\cdot)$ is twice differentiable  and its first and second derivatives satisfy
 
	\begin{align*}
	N_{\alpha} ' (x) [h] & = 2 ( x , h ) + 2 \alpha^2  (\rA^{\frac{1}{2}} x , \rA^{\frac{1}{2}} h )~ x, h \in D(\rA^{\frac{1}{2}}),\\
	N_{\alpha} '' (x) [h, k] & = 2 ( h , k ) + 2 \alpha^2 (\rA^{\frac{1}{2}} h , \rA^{\frac{1}{2}} k)  ~ x,h, k \in D(\rA^{\frac{1}{2}}).
	\end{align*}
	Also, if $x \in D(\rA)$ and $h \in D(\rA^\frac12)$ then 
	\begin{align*}
		N_{\alpha}' (x)[h] = 2 ( (I + \alpha^2 \rA) x, h )= 2 (x,h)+\alpha^2(\rA^\frac12 x, \rA^\frac12 h) =:2 \langle x, h \rangle_{\alpha}.
	\end{align*}

	Now, by applying the  It$\hat{\text{o}}$'s formula to $N_{\alpha}( \mathbf{y}^{\alpha, \delta})$ and then to the function $\varphi (x) = x^p$ and $\Vert\mathbf{y}^{\alpha, \delta}\Vert_{\alpha}^2$ and using the property \eqref{Eq:Cancel-Balpha} we obtain
	\begin{align}\label{high-Integ1}
	&\Vert \mathbf{y}_{h}^{\alpha, \delta} (t) \Vert_{\alpha}^{2p} + p \int_{0}^{t} \Vert \mathbf{y}_{h}^{\alpha, \delta} (s) \Vert_{\alpha}^{2p - 2} \Vert \rA^{\frac{1}{2}} \mathbf{y}_{h}^{\alpha, \delta} \Vert_{\h}^2 ds  + p \alpha^2 \int_{0}^{t} \Vert \mathbf{y}_{h}^{\alpha, \delta} (s) \Vert_{\alpha}^{2p - 2} \Vert \rA \mathbf{y}_{h}^{\alpha, \delta} \Vert_{\h}^2 ds \notag\\
	& \hspace{0.4cm} - p \delta \int_{0}^{t} \Vert \mathbf{y}_{h}^{\alpha, \delta} (s) \Vert_{\alpha}^{2p - 2} \langle \mathbf{y}_{h}^{\alpha, \delta},  \tilde{B}_{\alpha}(\bu,\mathbf{z}_{h}^{\alpha,\delta}) \rangle_\alpha ds \notag\\
	& \hspace{0.4cm}+ 2 \lambda_{\delta}^{-1} (\alpha) \int_{0}^{t} \Vert \mathbf{y}_{h}^{\alpha, \delta} (s) \Vert_{\alpha}^{2p - 2} \langle \tilde{B}_{\alpha}(\bu,J_{\alpha}^{-1}\bu ) - B(\bu, \bu),   \mathbf{y}_{h}^{\alpha, \delta} \rangle_{\alpha} ds \notag \\
	& \leq \Vert \mathbf{y}_{h}^{\alpha, \delta} (0) \Vert_{\alpha}^{2p} +  p  \alpha \lambda_{\delta}^{-2} (\alpha) \sum_{k \geq 1} \int_{0}^{t} \Vert \mathbf{y}_{h}^{\alpha, \delta} (s) \Vert_{\alpha}^{2p - 2} \Vert G(\delta \bu + \lambda_{\delta} (\alpha) \mathbf{y}_{h}^{\alpha, \delta}) e_k\Vert_{ \el^2 }^2 ds \notag\\
	& \hspace{0.4cm} + p (p-1) \alpha \lambda_{\delta}^{-2} (\alpha) \sum_{k \geq 1} \int_{0}^{t} \Vert \mathbf{y}_{h}^{\alpha, \delta} (s) \Vert_{\alpha}^{2p - 4}  \left( G(\delta \bu + \lambda_{\delta} (\alpha) \mathbf{y}_{h}^{\alpha, \delta}) e_k, \mathbf{y}^{\alpha, \delta}\right)^2 ds\notag \\
	& \hspace{0.4cm} +  \alpha^{\frac{1}{2}} \lambda_{\delta}^{-1} (\alpha)  p \int_{0}^{t} \Vert \mathbf{y}_{h}^{\alpha, \delta} (s) \Vert_{\alpha}^{2p - 2} \langle \mathbf{y}_{h}^{\alpha, \delta} ,  \G(\delta \bu + \lambda_{\delta} (\alpha) \mathbf{y}_{h}^{\alpha, \delta}) d \tilde{W}\rangle_{\alpha} . 
	\end{align}
	We need to estimate the terms one by one in this relation.
	For doing this we start with the It\^o correction terms. It is not difficult to see that there exists a constant $C(p)>0$ such that 
	\begin{align*}
	& I_1+ I_2:= p  \alpha \lambda_{\delta}^{-2} (\alpha) \sum_{k \geq 1} \int_{0}^{t} \Vert \mathbf{y}_{h}^{\alpha, \delta} (s) \Vert_{\alpha}^{2p - 2} \Vert G(\delta \bu + \lambda_{\delta} (\alpha) \mathbf{y}_{h}^{\alpha, \delta}) e_k\Vert_{ \el^2 }^2 ds \\
	& + p (p-1) \alpha \lambda_{\delta}^{-2} (\alpha) \sum_{k \geq 1} \int_{0}^{t} \Vert \mathbf{y}_{h}^{\alpha, \delta} (s) \Vert_{\alpha}^{2p - 4}  \left( G(\delta \bu + \lambda_{\delta} (\alpha) \mathbf{y}_{h}^{\alpha, \delta}) e_k, \mathbf{y}^{\alpha, \delta}\right)^2 ds\\
	& \quad \le C(p) \alpha \lambda_{\delta}^{-2} (\alpha) \int_{0}^{t} \Vert \mathbf{y}_{h}^{\alpha, \delta} (s) \Vert_{\alpha}^{2p - 2} \Vert G(\delta \bu + \lambda_{\delta} (\alpha) \mathbf{y}_{h}^{\alpha, \delta})\Vert_{ \mathscr{L}_2(\rK,\h) }^2 ds, 
	\end{align*}
	from which along with the Assumption \ref{Assum-Uniqness} and Remark \ref{Rem:uniqueness} 
	 and the Young inequality we deduce that 
	\begin{align}
	I_1+I_2\le 
	& C(p) \alpha \lambda_{\delta}^{-2} (\alpha)  \int_{0}^{t} \left[1 +  \Vert \mathbf{y}_{h}^{\alpha, \delta} (s) \Vert_{\alpha}\right]^{2p - 2} \left( 1 + \delta^2 \lvert \bu\rvert^2 + \lambda^2_{\delta} (\alpha) \lVert \mathbf{y}_{h}^{\alpha, \delta} \lVert_\alpha^2\right) ds \notag\\
	& \leq C(p) \alpha \lambda_{\delta}^{-2} (\alpha)  \int_{0}^{t} \Vert \mathbf{y}_{h}^{\alpha, \delta} (s) \Vert_{\alpha}^{2p - 2}  \left[1 + \delta^2 | \bu |^2\right] ds + C(p) \alpha \int_0^t \Vert \mathbf{y}_{h}^{\alpha, \delta} (s) \Vert_{\alpha}^{2p} ds \notag\\
	& \leq  C(p) \alpha \lambda_{\delta}^{-2} (\alpha)  \int_{0}^{t} \left[1+\Vert \mathbf{y}_{h}^{\alpha, \delta} (s) \Vert_{\alpha}^{2p}\right]  \left[1 + \delta^2 | \bu |^2\right] ds + C(p) \alpha \int_0^t \Vert \mathbf{y}_{h}^{\alpha, \delta} (s) \Vert_{\alpha}^{2p} ds.\label{Eq:Ito-Correction}
	\end{align}
	Next, 	since $h\in \mathscr{A}_M$, applications of Cauchy-Schwarz's inequality, Assumption \eqref{Assum-Uniqness} and Remark \ref{Rem:uniqueness}, and Young's inequality imply that there exists a constant $C>0$ such that for any $\alpha\in (0,1)$
	\begin{align*}
	\lqq{\int_0^{t} \lVert \ydela(r)\rVert^{2p-2}_\alpha ( G(\delta \bu+\lambda_\delta(\eps)\ydela)h(r),\ydela(r)) dr} \\
	&\le C(1+\lambda_\delta(\alpha))\int_0^{t} \lVert \ydela(r)\rVert^{2p}_\alpha\lVert h(r)\rVert_{\rK} dr  + CM^\frac12 T^\frac12\left(1+ \delta^{2p}\sup_{s\in [0,T]}\lvert \bu(s) \rvert^{2p} \right).
	\end{align*}
 
	The perturbation term containing $\bu$ can be estimated as follows. By applying \eqref{Eq:Est-DiffBandtildeB} we infer that 
	\begin{align*}
	& \lambda_{\delta}^{-1} (\alpha) \int_{0}^{t} \Vert \mathbf{y}_{h}^{\alpha, \delta} (s) \Vert_{\alpha}^{2p - 2} (\tilde{B}_{\alpha}(\bu,J_{\alpha}^{-1}\bu ) - B(\bu, \bu), (I + \alpha^2 A)  \mathbf{y}_{h}^{\alpha, \delta} ) ds \notag \\
	& \le C  \lambda_{\delta}^{-1} (\alpha) \int_{0}^{t} \Vert \mathbf{y}_{h}^{\alpha, \delta}  \Vert_{\alpha}^{2p - 2} [\alpha^2 \lvert \mathbf{y}_{h}^{\alpha, \delta} \rvert + \alpha \lvert \rA^\frac12 \mathbf{y}_{h}^{\alpha, \delta}\rvert+ \alpha^2 \rvert \rA \mathbf{y}_{h}^{\alpha, \delta}\rvert ] \lvert B(\bu,\bu)\rvert ds \\
	&\quad + 
	C  \alpha^2 \lambda_{\delta}^{-1} (\alpha)\int_0^t \Vert \mathbf{y}_{h}^{\alpha, \delta}  \Vert_{\alpha}^{2p - 2}\lvert \rA \mathbf{y}_{h}^{\alpha, \delta} \rvert \lvert \rA \bu\rvert\lvert \rA^\frac12 \bu \rvert ds.
	\end{align*}
	Several applications of  the Young inequality on the right hand side of the last inequality  imply
	\begin{equation}\label{Eq:Intern-Force}
	\begin{split}
	 \lqq{\lambda_{\delta}^{-1} (\alpha) \int_{0}^{t} \Vert \mathbf{y}_{h}^{\alpha, \delta} (s) \Vert_{\alpha}^{2p - 2} (\tilde{B}_{\alpha}(\bu,J_{\alpha}^{-1}\bu ) - B(\bu, \bu), (I + \alpha^2 \rA)  \mathbf{y}_{h}^{\alpha, \delta} ) ds} \\
	   &\le  C(p) \alpha^2 \lambda^{-2}_\delta(\alpha)\int_0^t \left( 1+ \lvert B(\bu,\bu)\rvert^2 + \lvert \rA \bu \rvert^2 \lvert \rA^\frac12 \bu \rvert^2   \right)\left[ 1+ \lVert \mathbf{y}^{\alpha,\delta} \rVert^{2p}_{\alpha}\right] ds  \nonumber \\
	& \, \quad+ \frac{p}{4}\int_0^t \Vert \mathbf{y}_{h}^{\alpha, \delta}  \Vert_{\alpha}^{2p - 2} \left( \lvert \rA^\frac12 \mathbf{y}^{\alpha,\delta} \rvert^2 + \alpha^2 \lvert \rA \mathbf{y}^{\alpha,\delta}\rvert^2 \right) ds. 
	\end{split}
	\end{equation}

	By using the estimate \red{\eqref{2.15-B}}
	 the Young inequality yields and the fact $\lvert y\rvert \le \lVert y \rVert_\alpha$ 
	 
	\begin{eqnarray}
	 	\lqq{p \delta \int_{0}^{t} \Vert \mathbf{y}_{h}^{\alpha, \delta} (s) \Vert_{\alpha}^{2p - 2} \langle \mathbf{y}_{h}^{\alpha, \delta},  \tilde{B}_{\alpha}(\bu,\mathbf{z}_{h}^{\alpha,\delta}) \rangle_\alpha ds} \\
	 	\lqq{= p \delta \int_{0}^{t} \Vert \mathbf{y}_{h}^{\alpha, \delta} (s) \Vert_{\alpha}^{2p - 2} \langle \mathbf{y}_{h}^{\alpha, \delta},  \tilde{B}(\bu,\mathbf{z}_{h}^{\alpha,\delta}) \rangle ds} \\
	&\le & C p  \delta \int_{0}^{t} \Vert \mathbf{y}_{h}^{\alpha, \delta} (s) \Vert_{\alpha}^{2p - 2}\lvert \rA^\frac12 \mathbf{y}_{h}^{\alpha, \delta}\rvert\,  \lvert \rA \bu \rvert \left(\lvert  \mathbf{y}_{h}^{\alpha, \delta}\rvert +\alpha^2 \lvert \rA \mathbf{y}_{h}^{\alpha, \delta}\rvert \right) ds \notag \\
	&\leq&  C(p) \delta^2  \int_{0}^{t} \Vert \mathbf{y}_{h}^{\alpha, \delta} (s) \Vert_{\alpha}^{2p}\lvert \rA \bu(s) \rvert^2 ds \notag \\
	&&+ \frac{p}{4} \int_{0}^{t} \Vert \mathbf{y}_{h}^{\alpha, \delta} (s) \Vert_{\alpha}^{2p - 2} \left(\lvert \rA^\frac12  \mathbf{y}^{\alpha, \delta}\rvert^2+ \alpha^2\vert \rA \mathbf{y}^{\alpha, \delta}\vert^2 \right)ds. 
	\label{Eq:Est-Linear-Pert}
	\end{eqnarray}

	The following estimates are obtained by applying the Burkholder-Davis-Gundy inequality (BDG) and the Young inequality
	\begin{align*}
	&  \mathbb{E} \sup_{0\le s \le t} \left\lvert \int_{0}^{s} \Vert \mathbf{y}^{\alpha, \delta} (s) \Vert_{\alpha}^{2p - 2} \langle  \G (\delta \bu + \lambda_{\delta}(\alpha)\mathbf{y}^{\alpha, \delta}) d \tilde{W}, \mathbf{y}^{\alpha, \delta} \rangle_{\alpha} \right\rvert \\
	& \quad \leq  \mathbb{E} \left[\int_{0}^{t} \Vert \mathbf{y}^{\alpha, \delta} (s) \Vert_{\alpha}^{2p - 2}    \Vert G (\delta \bu + \lambda_{\delta}(\alpha)\mathbf{y}^{\alpha, \delta})\Vert_{\mathscr{L}_2(\rK
		, \h)}^2 \Vert\mathbf{y}^{\alpha, \delta} (s) \Vert_{\alpha}^{2p} ds\right]^{\frac{1}{2}} \notag\\
	&\quad \le \frac12 \mathbb{E} \sup_{0\le s \le t} \lVert \mathbf{y}^{\alpha, \delta}\rVert^{2p}_\alpha+ C \int_{0}^{t} \Vert \mathbf{y}^{\alpha, \delta} (s) \Vert_{\alpha}^{2p - 2}    \Vert G (\delta \bu + \lambda_{\delta}(\alpha)\mathbf{y}^{\alpha, \delta})\Vert_{\mathscr{L}_2(\rK
		, \h)}^2 ds.
	\end{align*}
	Observe that second term of the right hand side of the last inequality can be dealt with the same technique as used in the proof of \eqref{Eq:Ito-Correction}. In particular, we see that 
	\begin{equation}\label{BDG1}
	\begin{split}
	&  \mathbb{E} \sup_{0\le s \le t} \left\lvert \int_{0}^{s} \Vert \mathbf{y}^{\alpha, \delta} (s) \Vert_{\alpha}^{2p - 2} \langle  \G (\delta \bu + \lambda_{\delta}(\alpha)\mathbf{y}^{\alpha, \delta}) d \tilde{W}, \mathbf{y}^{\alpha, \delta} \rangle_{\alpha} \right\rvert \\
	& \quad \le \frac12 \mathbb{E} \sup_{0\le s \le t} \lVert \mathbf{y}^{\alpha, \delta}\rVert^{2p}_\alpha+ C \int_{0}^{t}\left[1+ \Vert \mathbf{y}^{\alpha, \delta} (s) \Vert_{\alpha}^{2p}\right]\left[1+ \delta^2 \lvert \bu \rvert^2+ \lambda_\delta^2(\alpha)\right] ds.
	\end{split}
	\end{equation}
	By plugging the inequalities \eqref{Eq:Ito-Correction}, \eqref{Eq:Intern-Force}, \eqref{Eq:Est-Linear-Pert} and \eqref{BDG1} into \eqref{high-Integ1} and by taking into account the Remark \ref{Rem:Lambdaalpha} we find that there exists a constant $C(p)>0$ such that 
	\begin{align}\label{high-int2}
	& {\mathbb{E}} \sup_{s \in [0, t]}  \Vert \mathbf{y}_{h}^{\alpha, \delta} (t) \Vert_{\alpha}^{2p} + p  {\mathbb{E}} \int_{0}^{t} \Vert \mathbf{y}_{h}^{\alpha, \delta} (t) \Vert_{\alpha}^{2p - 2}\left[ \Vert \rA^{\frac{1}{2}} \mathbf{y}_{h}^{\alpha, \delta} \Vert_{\h}^2 + \alpha^2 \Vert \rA  \mathbf{y}_{h}^{\alpha, \delta} \Vert_{\h}^2\right] ds \notag\\
	& \leq  C(p)  {\mathbb{E}} 
	\int_{0}^{t} \left[1 + \Vert \mathbf{y}_{h}^{\alpha, \delta} (t) \Vert_{\alpha}\right]^{2p}\left( 1+ \lVert h \rVert_{\rK} +\delta^2 [\lvert \rA\bu\rvert^2+\lvert \bu\rvert^2 ] + \lvert B(u,u) \rvert^2+ \lvert \rA \bu \rvert^2\lvert \rA^\frac12 \bu\rvert^2 \right) ds \notag\\
	& \quad +  CM^\frac12 T^\frac12\left(1+ \delta^{2p}\lvert \rA^\frac12 \bu \rvert^{2p} \right) + \Vert \mathbf{y}_{h}^{\alpha, \delta} (0) \Vert_{\alpha}^{2p} 
	\end{align}
	Applying the  Gronwall lemma now yield that there exists a constant $C>0$ such that for any $\alpha \in (0,1)$ we have 
	\begin{align} \label{Estim-fin}
	&  {\mathbb{E}} \sup_{t \in [0, T]} \Vert \mathbf{y}_{h}^{\alpha, \delta} (t) \Vert_{\alpha}^{2p} +    {\mathbb{E}} \int_{0}^{t} \Vert \mathbf{y}_{h}^{\alpha, \delta} (t) \Vert_{\alpha}^{2p - 2} \Vert\mathbf{y}_{h}^{\alpha, \delta} \Vert_{\ve}^2 ds   \notag \\
	& \leq C\left( 1+  \lvert \rA^\frac12  \mathbf{y}_{h}^{\alpha, \delta} (0)\rvert^2 + CM^\frac12 T^\frac12\left(1+ \delta^{2p}\lvert \rA^\frac12 \bu \rvert^{2p} \right)\right)e^{\int_0^T\Phi_\delta (s) ds},
	\end{align}
	where 
	\begin{equation*}
	\Phi:= 1+ \lVert h \rVert_{\rK}+\delta^2 [\lvert \rA\bu\rvert^2+\lvert \bu\rvert^2 ] + \lvert B(u,u) \rvert^2+ \lvert \rA \bu \rvert^2\lvert \rA^\frac12 \bu\rvert^2.
	\end{equation*}
	This completes the proof of Theorem \ref{THM:SCNS-alpha-delta1}.
\end{proof}
 
\subsection{Analysis of the deterministic controlled Navier-Stokes-$\alpha$}
In this subsection we fix $h \in \el^2(0,T; \mathrm{K})$ and  analyse the following deterministic controlled Navier-Stokes-$\alpha$ model:
\begin{subequations}\label{DCNS-alpha-delta}
	\begin{align}
	& d \mathbf{y}^{\delta} + \rA \mathbf{y}^{\delta} dt + (1- \delta) B(\mathbf{y}^{\delta}, \mathbf{y}^{\delta}) dt + \delta B (\bu, \mathbf{y}^{\delta}) + \delta B (\mathbf{y}^{\delta}, \bu) = G(\delta \bu + (1 - \delta) \mathbf{y}^{\delta}) h, \label{Eq:DCNS-alpha-delta1}    \\ 	 
	& \mathbf{y}^{\delta} (0) = (1 - \delta) \xi . \label{Eq:DCNS-alpha-delta} 
	\end{align}
\end{subequations}
 
The main result of this subsection is given in the following theorem.
\begin{Thm} \label{THM:DCNS-alpha-delta}
Let $h \in \el^2(0,T; \mathrm{K})$ and $\xi \in \ve$. Then, \eqref{DCNS-alpha-delta} has a unique solution $\mathbf{y}_h^{\delta} \in C([0, T]; \h^1) \cap \el^2 (0, T; \h^2)$. Moreover, if $h \in \mathcal{A}_M,\, M>0,$ then there exists a deterministic constant $C > 0$, which depends only on $M$ and $\lVert \xi \rVert$, such that with probability $1$
\begin{align}\label{Eq:Est-Det-Control}
\sup_{t \in [0, T]}( \lvert \mathbf{y}_{h}^{\delta} (t)  \rvert^2+ \Vert \mathbf{y}_{h}^{\delta} (t) \Vert^2 )+ \int_{0}^{T} \left(\rvert \rA \mathbf{y}_{h}^{\delta} (t)  \rvert^2+ \rvert \rA \mathbf{y}_h^{\delta} (t) \rvert^2 \right)dt \leq C.
\end{align}
\end{Thm}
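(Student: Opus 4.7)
The plan is to follow a classical Faedo--Galerkin scheme, derive two layers of a priori estimates, pass to the limit by compactness, and treat uniqueness by a direct energy argument. Notice first that the problem naturally splits into two cases: when $\delta=0$ the system \eqref{DCNS-alpha-delta} is simply the $2D$ NSE with nonlinear control $G(\mathbf{y}^\delta)h$, while when $\delta=1$ it is the linear equation $\partial_t\mathbf{y}^\delta+\rA\mathbf{y}^\delta+B(\bu,\mathbf{y}^\delta)+B(\mathbf{y}^\delta,\bu)=G(\bu)h$ with zero initial data. In both cases $\bu$ is deterministic and belongs to $C([0,T];\ve)\cap L^2(0,T;D(\rA))$, so it may be treated as a known coefficient.

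Denote by $P_n$ the orthogonal projection in $\h$ onto $\mathrm{span}\{e_1,\dots,e_n\}$ and consider the truncated system obtained by projecting \eqref{Eq:DCNS-alpha-delta1} onto $P_n\ve$. This is a finite-dimensional ODE with locally Lipschitz right-hand side (using \eqref{2.4} and Assumption \ref{Assum-Uniqness}), so classical theory yields a local solution $\mathbf{y}^\delta_n$; the global extension comes from the bounds below. Testing the projected equation with $\mathbf{y}^\delta_n$, the nonlinear terms reduce to a harmless piece: for $\delta=0$ we use \eqref{Eq:Cancel-Prop-B}; for $\delta=1$ the term $(B(\bu,\mathbf{y}^\delta_n),\mathbf{y}^\delta_n)$ vanishes by \eqref{Eq:Cancel-Prop-B}, while $(B(\mathbf{y}^\delta_n,\bu),\mathbf{y}^\delta_n)$ is controlled by \eqref{2.4} as
\[
|(B(\mathbf{y}^\delta_n,\bu),\mathbf{y}^\delta_n)|\le c|\mathbf{y}^\delta_n|^{1/2}\|\mathbf{y}^\delta_n\|^{3/2}\|\bu\|,
\]
which after Young's inequality gives a term absorbed in the dissipation and a term of the form $C|\mathbf{y}^\delta_n|^2\|\bu\|^4$. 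The control term is estimated via Assumption \ref{Assum-Uniqness} and Cauchy--Schwarz: $|\langle G(\delta\bu+(1-\delta)\mathbf{y}^\delta_n)h,\mathbf{y}^\delta_n\rangle|\le C(1+|\bu|+|\mathbf{y}^\delta_n|)\|h\|_{\rK}|\mathbf{y}^\delta_n|$. Since $h\in\mathcal{A}_M$ ensures $\int_0^T\|h\|_{\rK}^2dr\le M$, a Gronwall argument yields a bound on $\sup_{t\le T}|\mathbf{y}^\delta_n(t)|^2+\int_0^T\|\mathbf{y}^\delta_n(s)\|^2 ds$ depending only on $M$, $|\xi|$ and $\sup_{[0,T]}\|\bu\|$.

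For the second layer, test the projected equation with $\rA\mathbf{y}^\delta_n$. The nonlinear terms are estimated via \eqref{2.9} and \eqref{2.10}: for $\delta=0$, $|(B(\mathbf{y}^\delta_n,\mathbf{y}^\delta_n),\rA\mathbf{y}^\delta_n)|\le c|\mathbf{y}^\delta_n|^{1/2}\|\mathbf{y}^\delta_n\|\,|\rA\mathbf{y}^\delta_n|^{3/2}$, while for $\delta=1$ both $B(\bu,\mathbf{y}^\delta_n)$ and $B(\mathbf{y}^\delta_n,\bu)$ are estimated using $\bu\in L^2(0,T;D(\rA))$. Using Young's inequality and Assumption \ref{Assum-Uniqness} with the $\mathscr{L}_2(\rK,\ve)$ estimate from Remark \ref{Rem:uniqueness}, the factor $|\rA\mathbf{y}^\delta_n|^2$ is partly absorbed into the dissipation, and Gronwall combined with the previous estimate yields uniform bounds for $\sup_{t\le T}\|\mathbf{y}^\delta_n(t)\|^2+\int_0^T|\rA\mathbf{y}^\delta_n|^2 ds$, depending only on $M$ and $\|\xi\|$. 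Standard Aubin--Lions compactness then extracts a subsequence converging in $L^2(0,T;\ve)\cap C([0,T];\h)$ and weakly in $L^2(0,T;D(\rA))$ to a limit $\mathbf{y}_h^\delta$, which satisfies \eqref{DCNS-alpha-delta} and inherits the a priori bounds.

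Uniqueness: let $\mathbf{y}_1,\mathbf{y}_2$ be two solutions and set $\mathbf{y}=\mathbf{y}_1-\mathbf{y}_2$. For $\delta=1$ the equation for $\mathbf{y}$ is linear with zero source and zero initial data, so the energy identity and Gronwall give $\mathbf{y}\equiv 0$ at once. For $\delta=0$, $\mathbf{y}$ satisfies $\partial_t\mathbf{y}+\rA\mathbf{y}+B(\mathbf{y},\mathbf{y}_1)+B(\mathbf{y}_2,\mathbf{y})=G(\mathbf{y}_1)h-G(\mathbf{y}_2)h$; testing with $\mathbf{y}$, using \eqref{Eq:Cancel-Prop-B} to kill $(B(\mathbf{y}_2,\mathbf{y}),\mathbf{y})$, bounding $|(B(\mathbf{y},\mathbf{y}_1),\mathbf{y})|$ by \eqref{2.4}, and using the Lipschitz property of $G$ on $\h$ with $\|h\|_{\rK}\in L^2$, Gronwall concludes. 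The principal obstacle is the non-symmetric drift $B(\mathbf{y}^\delta,\bu)$ in the $\delta=1$ case, which lacks the cancellation available for the classical NSE; handling it cleanly requires that the Young-splitting constants for the trilinear form be tracked so that the resulting Gronwall factor depends only on $\|\bu\|_{C([0,T];\ve)}$ and $\|\bu\|_{L^2(0,T;D(\rA))}$, both of which are controlled by the NSE theorem in terms of $\|\xi\|$.
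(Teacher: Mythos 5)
Your proposal is correct, and it reaches the stated estimate by a route that differs from the paper's in two respects. First, the paper declares the existence/uniqueness part "standard" and only derives the a priori bound, whereas you actually carry out the Faedo--Galerkin construction and the Aubin--Lions passage to the limit; that is more complete, though it is exactly the standardness the paper is invoking. Second, and more substantively, the paper obtains the $\h$- and $\ve$-bounds in a single step by testing with $(I+\rA)\mathbf{y}^\delta$ and exploiting the $2D$ periodic cancellation $\bigl((I+\rA)\mathbf{y}^\delta, B(\mathbf{y}^\delta,\mathbf{y}^\delta)\bigr)=0$, which kills the quadratic term outright; you instead run the two-layer bootstrap (test with $\mathbf{y}^\delta_n$, then with $\rA\mathbf{y}^\delta_n$), estimate $(B(\mathbf{y}^\delta_n,\mathbf{y}^\delta_n),\rA\mathbf{y}^\delta_n)$ by Gagliardo--Nirenberg and Young, and make the resulting Gronwall coefficient $|\mathbf{y}^\delta_n|^2\Vert\mathbf{y}^\delta_n\Vert^2$ integrable via the first layer. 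Your version does not use the torus structure for the $H^1$ estimate and so would survive on a general bounded domain, at the cost of an extra step; the paper's version is shorter but tied to the periodic setting. Your exponent bookkeeping (the $\frac14\Vert\cdot\Vert^2 + C|\cdot|^2\Vert\bu\Vert^4$ splittings, the $|\rA\cdot|^{3/2}$ absorptions) is consistent, the treatment of the control term via Assumption \ref{Assum-Uniqness} matches the paper's, and your uniqueness argument (linear case for $\delta=1$, standard $2D$ energy argument plus Lipschitz continuity of $G$ for $\delta=0$) is sound; the final dependence of the constant only on $M$ and $\lVert\xi\rVert$ is justified exactly as you say, through the deterministic NSE bounds \eqref{Eq:Est-Det-NSE} on $\bu$.
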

\begin{Rem}
	Note that when $\delta=0$ and $h=0$ the above theorem provides also the following estimates for $\bu$, the unique solution to \eqref{Eq: ABS-NSE}: 
	\begin{align}\label{Eq:Est-Det-NSE}
	\sup_{t \in [0, T]} \Vert \bu (t) \Vert^2 + \int_{0}^{T} \vert \rA  \bu (t) \vert^2 dt \leq C.
	\end{align}
	This estimate could be found in many classical literature for the Navier-Stokes equations such as \cite{TEMAMA} and \cite{PC+CF-BlueBook}.
\end{Rem}
\begin{proof}[Proof of Theorem \ref{THM:DCNS-alpha-delta}]
Since the system  \eqref{DCNS-alpha-delta} is the Navier-Stokes with the linear perturbations $ \delta B (\bu, \mathbf{y}^{\delta}) + \delta B (\mathbf{y}^{\delta}, \bu)$ and the Lipschitz continuous perturbations $G(\delta \bu + (1 - \delta) \mathbf{y}^{\delta}) h,$  we can prove the existence and uniqueness results in the above theorem by following the standard scheme of proof for the Navier-Stokes equations, see, for instance, \cite{TEMAMA}. Since this is now standard  we only focus on deriving the crucial estimates for the solutions.  
 {For the sake of simplicity, we will just write $ \mathbf{y}^\delta $ instead of  $\mathbf{y}^\delta_h$}. We will also suppress the dependence of  $ \mathbf{y}^\delta $ on the time variable. 
 
By formally multiplying the first equation in \eqref{DCNS-alpha-delta} by $(I+\rA)\mathbf{y}^{\delta} (t) $ we find that 
\begin{align}\label{Eq:deterministic-0delta1}
& \frac{1}{2} \frac{d}{d t}\left( \lvert \mathbf{y}^\delta\rvert^2 + \lvert \rA^{\frac{1}{2}} \mathbf{y}^\delta \rvert^2 \right) +  \rvert \rA  \mathbf{y}^\delta \rvert^2 + \rvert \rA^{\frac{1}{2}} \mathbf{y}^\delta \rvert^2 + (1 - \delta) ( (I + \rA)  \mathbf{y}^\delta , B(\mathbf{y}^\delta, \mathbf{y}^\delta)) \notag \\
& \hspace{0.4cm} + \delta ( (I + \rA)  \mathbf{y}^\delta  , B(\bu, \mathbf{y}^\delta) + B(\mathbf{y}^\delta, \bu) ) = \left( (I + \rA)  \mathbf{y}^\delta, G(\delta \bu + (1 - \delta) \mathbf{y}^{\delta}) h (t) \right).
\end{align}
Observe that since we are working on the torus
\begin{align*}
	 (\mathbf{y}^\delta(I+\rA),B(\mathbf{y}^\delta,\mathbf{y}^\delta)) = 0 \quad \text{and } \quad ( B(\bu,\mathbf{y}^\delta),\mathbf{y}^\delta)= 0.
\end{align*}
By the H\"older, the Gagliardo-Nirenberg and the Young inequalities we obtain
\begin{align*}
	\lqq{\delta ( B(\bu,\mathbf{y}^\delta) + B(\mathbf{y}^\delta, \bu), (I+\rA)\mathbf{y}^\delta)} \\
	&\leq \delta |C(I+\rA)\mathbf{y}^\delta|[|\bu|_{\el^\infty}|\rA^\frac12 \mathbf{y}^\delta| + |\mathbf{y}^\delta|_{\el^2}|\nabla \bu|_{\el^2}] \\
	&\leq \delta C[|\mathbf{y}|^\delta + |\rA\mathbf{y}^\delta|] |\rA\bu||\rA^\frac12 \mathbf{y}^\delta| \\
	&\leq \frac14 |\rA\mathbf{y}^\delta|^2 + C\delta[1+|\rA \bu|^2][|\mathbf{y}^\delta|^2 + |\rA^\frac12 \mathbf{y}^\delta|].
\end{align*}

We will now deal with the term containing $G.$ By using the Cauchy-Schwartz, the Young inequalities and the Assumption \ref{Assum-Uniqness} we see that 

\begin{align*}
	\lqq{ (G(\delta \bu + (1-\delta)\mathbf{y}^\delta)h , (I+\rA)\mathbf{y}^\delta    )}\\ 
	&\leq G |(I+\rA)\mathbf{y}^\delta| |h|_\rK |G(\delta \bu + (1-\delta)\mathbf{y}^\delta)|_{\mathscr{L}_2(\rK,\h)} \\
	&\leq \frac14 |\rA\mathbf{y}^\delta|^2 + C|\mathbf{y}^\delta|^2 [ |h|_\rK^2+(1-\delta)^2] + C|h|_\rK^2[1+\delta^2 |\bu|^2].
\end{align*}

Collecting these inequalities together implies

\begin{align*}
	\lqq{\frac12 \frac{d}{dt}[|\mathbf{y}^\delta|^2 + |\rA^\frac12\mathbf{y}^\delta|] + |\rA^\frac12\mathbf{y}^\delta|^2 + \frac12|\rA\mathbf{y}^\delta|^2 }\\
	&\leq C[|\mathbf{y}^\delta|^2 + |\rA^\frac12\mathbf{y}^\delta|][\delta(1+|\rA\bu|^2) + |h|_\rK^2 + (1-\delta)^2] + C|h|_\rK^2 [1+\delta^2 |\bu|^2].
\end{align*}
Applying the Gronwall's inequality yields that there exists a constant $C>0$, depending only on $M$ and $\lvert \rA \xi \rvert$, such that 
\begin{align}
	\sup_{t\in [0,T]} (|\mathbf{y}^\delta(t)|^2 + |\rA^\frac12 \mathbf{y}^\delta(t)|^2) + \int_0^T[|\rA^\frac12 \mathbf{y}^\delta(t)|^2 + \lvert \rA \mathbf{y}^\delta(t)]dt\le C, 
\end{align}
which completes the proof of Theorem  \ref{THM:DCNS-alpha-delta}.
\end{proof}

The above theorem enables us to define a map $\Gamma^{0,\delta}_\xi: C([0,T];\mathrm{K}) \to C([0,T];\h)\cap \el^2(0,T;D(\rA^{\frac 12}))$  by setting
\begin{itemize}
	\item $\Gamma_\xi^{0,\delta}(x)$ is the unique solution $\bu_h^\delta$ to \eqref{Eq:DCNS-alpha-delta1}  if $x=\int_0^\cdot h(r) dr$, $h \in \el^2(0,T;\mathrm{K})$;
	\item  $\Gamma_\xi^{0,\delta}(x)=0$ otherwise.
\end{itemize} 
We will see in the next theorem and  Remark \ref{Rem-Borel-Meas} that this map is in fact Borel measurable.

{We now state and prove the following two important results.
}
	\begin{Thm} \label{THM:LDP1}
		Let $\delta$ and $\xi \in D(\rA^{\frac{1}{2}})$. Then, the set 
		$\{ \Gamma^{0, \delta}_{\xi} (\int_0^\cdot h(s)ds): h \in \mathcal{\rA}_M\}$ is a compact set of  $C([0, T]: \h)\cap \el^2(0,T;D(\rA^\frac12))$. \\ 
 
	\end{Thm}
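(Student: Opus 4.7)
The plan is to establish sequential compactness directly. Let $(h_n)_{n\ge 1} \subset \mathcal{A}_M$ be an arbitrary sequence and denote by $\mathbf{y}_n := \mathbf{y}_{h_n}^\delta$ the corresponding unique solutions to \eqref{DCNS-alpha-delta} provided by Theorem \ref{THM:DCNS-alpha-delta}. Since $\mathcal{A}_M$ is a closed ball in $L^2(0,T; \rK)$, it is weakly sequentially compact; in particular, the weak topology on $\mathcal{A}_M$ is induced by the metric $d_1$ from \eqref{Eq:Metric-SM}, so after extraction we may assume $h_n \rightharpoonup h$ weakly in $L^2(0,T;\rK)$ for some $h \in \mathcal{A}_M$. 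The target is to show $\mathbf{y}_n \to \mathbf{y}_h^\delta$ in $C([0,T]; \h) \cap L^2(0,T; D(\rA^{1/2}))$, which by uniqueness of the limit also forces convergence of the whole sequence and hence yields compactness of the image.

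The first step is to use the uniform estimate \eqref{Eq:Est-Det-Control} (depending only on $M$ and $\lVert \xi \rVert$) to obtain a uniform bound for $(\mathbf{y}_n)$ in $L^\infty(0,T; \ve) \cap L^2(0,T; D(\rA))$. The second step is to bound $\partial_t \mathbf{y}_n$ uniformly in $L^2(0,T; \h)$ (or, if this proves inconvenient, in $L^{4/3}(0,T; \ve')$): the linear term $\rA \mathbf{y}_n$ is controlled by the $L^2 D(\rA)$ bound; the bilinear terms $(1-\delta)B(\mathbf{y}_n,\mathbf{y}_n)$, $\delta B(\bu,\mathbf{y}_n)$ and $\delta B(\mathbf{y}_n,\bu)$ are controlled by the estimates \eqref{2.4}, \eqref{2.10}, \eqref{refine-2.10} together with the $L^\infty \ve \cap L^2 D(\rA)$ bounds on $\bu$ and $\mathbf{y}_n$; and finally the noise-type term $G(\delta\bu + (1-\delta)\mathbf{y}_n) h_n$ is controlled via Assumption \ref{Assum-Uniqness} by $C(1+|\bu|+|\mathbf{y}_n|)\lVert h_n\rVert_{\rK}$, which lies in $L^2(0,T;\h)$ thanks to $h_n \in \mathcal{A}_M$. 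The Aubin--Lions compactness lemma, applied with the triple $D(\rA) \embed \ve \embed \h$, then yields strong convergence of a further subsequence (not relabelled) in $L^2(0,T;\ve) = L^2(0,T; D(\rA^{1/2}))$ and, by a standard Arzel\`a--Ascoli argument using the time derivative bound, in $C([0,T]; \h)$ as well.

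The third and most delicate step is to identify the limit $\mathbf{y}_\infty$ as $\mathbf{y}_h^\delta$ by passing to the limit in the weak formulation of \eqref{Eq:DCNS-alpha-delta1}. The linear terms $\rA \mathbf{y}_n$ pass to the limit by weak convergence in $L^2 D(\rA)$. The bilinear terms pass to the limit using the strong $L^2 \ve$ convergence combined with the weak $L^2 D(\rA)$ convergence: e.g., $B(\mathbf{y}_n,\mathbf{y}_n) - B(\mathbf{y}_\infty,\mathbf{y}_\infty) = B(\mathbf{y}_n - \mathbf{y}_\infty, \mathbf{y}_n) + B(\mathbf{y}_\infty, \mathbf{y}_n - \mathbf{y}_\infty)$, and each piece is tested against a smooth test function and estimated using \eqref{2.10}--\eqref{refine-2.10}. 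The hardest term is the product $G(\delta\bu + (1-\delta)\mathbf{y}_n) h_n$: here $h_n$ only converges weakly, but the coefficient converges strongly in $C([0,T];\h)$ thanks to the Lipschitz continuity of $G$ from Assumption \ref{Assum-Uniqness}. Writing
\begin{align*}
\int_0^T \big(G(\delta\bu + (1-\delta)\mathbf{y}_n) h_n - G(\delta\bu + (1-\delta)\mathbf{y}_\infty) h, \varphi\big)\, dt
\end{align*}
and splitting it as
\begin{align*}
\int_0^T \big([G(\delta\bu + (1-\delta)\mathbf{y}_n) - G(\delta\bu + (1-\delta)\mathbf{y}_\infty)] h_n, \varphi\big)\, dt
+ \int_0^T \big(G(\delta\bu + (1-\delta)\mathbf{y}_\infty) (h_n - h), \varphi\big)\, dt,
\end{align*}
the first piece is bounded via Cauchy--Schwarz and the Lipschitz bound by $C M^{1/2} \lVert \mathbf{y}_n - \mathbf{y}_\infty\rVert_{L^2(0,T;\h)} \lVert \varphi\rVert_\infty \to 0$, while the second is an inner product in $L^2(0,T;\rK)$ against the fixed element $G(\delta\bu + (1-\delta)\mathbf{y}_\infty)^* \varphi$, hence vanishes by weak convergence of $h_n$.

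The initial condition $\mathbf{y}_\infty(0) = (1-\delta)\xi$ is inherited from the uniform convergence in $C([0,T];\h)$. Consequently, $\mathbf{y}_\infty$ solves \eqref{DCNS-alpha-delta} with control $h$, and by the uniqueness half of Theorem \ref{THM:DCNS-alpha-delta} we conclude $\mathbf{y}_\infty = \mathbf{y}_h^\delta = \Gamma_\xi^{0,\delta}(\int_0^\cdot h(s)\,ds)$. This identifies the limit and gives both the compactness of the image and, as a by-product, continuity of $\Gamma_\xi^{0,\delta}$ along sequences in $\mathcal{A}_M$. The main technical obstacle is the second step, namely securing enough time-regularity for $(\partial_t \mathbf{y}_n)$ uniformly in $n$ so that Aubin--Lions can be invoked to upgrade the weak $L^2 D(\rA)$ bound to strong $L^2 \ve$ convergence; all other steps are then routine consequences of the estimates collected in Section \ref{sec:2}.
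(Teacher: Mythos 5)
Your proposal is correct and follows essentially the same route as the paper's proof: weak sequential compactness of $\mathcal{A}_M$, the uniform bounds from Theorem \ref{THM:DCNS-alpha-delta}, a uniform bound on $\partial_t \mathbf{y}_n$, Aubin--Lions to upgrade to strong $\el^2(0,T;\ve)$ convergence, passage to the limit term by term (with the same splitting of the $G(\cdot)h_n$ term into a Lipschitz part and a weak-convergence part), and identification of the limit by uniqueness. If anything, your treatment is slightly more careful than the paper's in that you include the control term $G(\delta\bu+(1-\delta)\mathbf{y}_n)h_n$ in the bound on $\partial_t\mathbf{y}_n$ and you address explicitly how the convergence in $C([0,T];\h)$ is obtained.
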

\begin{Rem}\label{Rem-Borel-Meas}
	The above proposition amounts to say that if $(h_n)_{n\in \mathbb{N}}\subset \mathcal{A}_M$, $M>0$, is a sequence that converges weakly to $h \in \mathcal{A}_M$, then $\Gamma^{0,\delta}_\xi \left(\int_0^{\cdot}h_n(r)dr \right)$ strongly converges to $\Gamma_\xi^{0,\delta(\int_0^{\cdot} h(r) dr)}$ in $C([0,T];\h)\cap \el^2(0,T;D(\rA^{\frac 12}))$. Consequently, the map 
	$$
	\mathcal{A}_M \ni h \mapsto \Gamma^{0,\delta}_\xi(\int_0^{\cdot} h(r)dr)\in C([0,T];\h)\cap \el^2(0,T;D(\rA^\frac12)),
	$$ 
	is Borel measurable. 
\end{Rem}
\begin{proof}[Proof of Theorem \ref{THM:LDP1}]
	Let $(h_n)_{n\in \mathbb{N}}\subset \mathcal{A}_M$ and $h\in \mathcal{A}_M$ such that  
	\begin{align*}
		h_n\to h \qquad \text{weakly in} \qquad \el^2(0,T;\rK).
	\end{align*}
Let us denote by $\mathbf{y}_n= \Gamma^{0, \delta}(\int_0^\cdot h(s)ds),\, n\in\mathbb{N}$. Then by {Theorem \ref{THM:DCNS-alpha-delta}}  there exists a constant $C>0$ such that for all $n\in\mathbb{N}$
\begin{align}
	\sup_{t\in [0,T]}\left(|\mathbf{y}_n(t)|^2 +  |\rA^\frac12 \mathbf{y}_n(t)|^2  \right) + \int_0^T\left(|\rA^\frac12 \mathbf{y}_n |^2  + |\rA  \mathbf{y}_n(t)|^2\right)ds
	< C.  \label{Eq:UNIF-YN} 
\end{align}
Furthermore, 
\begin{align*}
	|\partial_t\mathbf{y}_n|\leq |\rA\mathbf{y}_n| + (1-\delta)|B(\mathbf{y}_n,\mathbf{y}_n)| + \delta[|B(\bu,\mathbf{y}_n)| + |B(\mathbf{y}_n, \bu)|]
\end{align*}
Now, observe that by making use of the H\"older, the Gagliardo-Nirenberg inequalities we obtain that there exists $C>0$ such that for all $n\in\mathbb{N}$
\begin{align*}
\lqq{(1-\delta)|\mathbf{y}_n|_{\el^\infty}|\rA^\frac12\mathbf{y}_n| + \delta[|\bu|_{\el^\infty}|\rA^\frac12 \mathbf{y}_n| + |\mathbf{y}_n|_{\el^\infty}|\rA^\frac12 \bu|] }\\
&\leq C|\rA\mathbf{y}_n|[(1-\delta)|\rA^\frac12\mathbf{y}_n| + \delta|\rA^\frac12 \bu|] + C\delta|\rA^\frac12 \mathbf{y}_n||\rA\bu|.
\end{align*}
Hence,
\begin{align}
	 \int_0^T|\partial_t\mathbf{y}_n|^2 dt\leq& C[(1-\delta)\sup_t|\rA^\frac12\mathbf{y}_n|^2 + \delta \sup_t|\rA^\frac12 \bu|^2 ] \int_0^T|\rA\mathbf{y}_n|^2ds + C\delta \sup_t|\rA^\frac12\mathbf{y}_n|^2\int_0^T|\rA\bu|^2ds \nonumber  \\
	\leq & C[(1-\delta) + \delta\sup_t|\rA^\frac12 \bu|^2]  + C\delta \int_0^T|\rA \bu|^2ds \label{Eq:UNIF-EST-dy}
\end{align}
The estimates \eqref{Eq:UNIF-YN} and  \eqref{Eq:UNIF-EST-dy} imply that 
\begin{itemize}
	\item $(\mathbf{y}_n)_n$ is uniformly bounded in $C([0,T];D(\rA^\frac12))\cap \el^2(0,T;D(\rA))$.
	\item $ (\partial_t\mathbf{y}_n)_n $ is uniformly bounded in $\el^2(0,T;\h)$.
\end{itemize}
Hence, by the Banach-Alaoglu and the Aubin-Lions theorem there exist a subsequence, still denoted by $\mathbf{y}_n$, of $\mathbf{y}_n$ and $\mathbf{y}$ such that 
\begin{align}
	&\mathbf{y}_n \to \tilde{\mathbf{y}} \quad \text{ weak-* in } \el^\infty(0,T;D(\rA^\frac12)) \nonumber \\
	&\mathbf{y}_n \to \tilde{\mathbf{y}} \quad \text{ weak in } \el^2(0,T;D(\rA )) \label{Eq:WEAK}\\
	&\mathbf{y}_n \to \tilde{\mathbf{y}} \quad \text{ strong in } \el^2(0,T;D(\rA^\frac12 )) \label{Eq:STRONG} \\
	&\partial_t\mathbf{y}_n \to \partial_t\tilde{\mathbf{y}} \quad \text{ weak in } \el^2(0,T;\h) \nonumber
\end{align}
The last convergence and the first one imply that 
\begin{align*}
	\tilde{\mathbf{y}}\in C([0,T];\h)\cap C_w([0,T];D(\rA^\frac12)),
\end{align*}
where $C_w([0,T];D(\rA^\frac12))$ denotes the space of functions $f:[0,T]\to X,$ $X$ a given Banach space, that are weakly continuous.
By passing to the limits we shall show that $\tilde{\mathbf{y}}$ is a solution to the system \eqref{DCNS-alpha-delta}.
In fact by arguing exactly as in \cite{TEMAMA} we see that 
\begin{align*}
	B(\mathbf{y}^n,\mathbf{y}^n) \to B(\tilde{\mathbf{y}},\tilde{\mathbf{y}}) \quad \text{ in} \quad \el^2(0,T;\h).
\end{align*}
Since $B(\bu,\mathbf{y}^n)$, $B(\mathbf{y}^n, \bu)$ and $\rA\mathbf{y}^n$ are linear continuous $D(\rA^\frac12),\, D(\rA^\frac12)$ and $ D(\rA) $, respectively, by using the strong convergence \eqref{Eq:STRONG} and the weak convergence \eqref{Eq:WEAK} we obtain 
\begin{itemize}
	\item $B(\bu,\mathbf{y}_n)+B(\mathbf{y}_n, \bu) \to B(\bu,\tilde{\mathbf{y}})+B(\tilde{\mathbf{y}},\bu)$ strong in $\el^2(0,T;\h)$.
	\item $\rA\mathbf{y}_n \to \rA\tilde{\mathbf{y}}$ weak in $\el^2(0,T;\h).$
\end{itemize}
Note that the first convergence holds because $\bu\in C([0,T],D(\rA^\frac12))\cap \el^2(0,T;D(\rA)).$

What remains to prove is that 
\begin{align}
	G(\delta \bu + \lambda_\delta(\alpha)\mathbf{y}_n)h_n \to G(\delta \bu + \lambda_\delta(\alpha)y)h \quad\text{ in  } \el^2(0,T;\h). \label{Eq:CONV-CONTROL}
\end{align}
To prove this fact we first observe that  the Assumption \ref{Assum-Uniqness} and the strong convergence \eqref{Eq:STRONG} imply

\begin{align*}
	G(\delta \bu + \lambda_\delta(\alpha)\mathbf{y}_n) \to G(\delta \bu + \lambda_\delta(\alpha)\tilde{\mathbf{y}}) \quad \text{ strongly in }\quad \el^2(0,T;\mathscr{L}_2(\rK,\h)).
\end{align*}
 This along with the assumption that $h_n\to h$ weak in $\el^2(0,T;\rK)$ implies that convergence \eqref{Eq:CONV-CONTROL} holds true.
 
 By collecting all the above convergences, it is not difficult to see that $\tilde{\mathbf{y}}$ is a solution to \eqref{DCNS-alpha-delta}.
 By uniqueness, $\tilde{\mathbf{y}} = \mathbf{y}^\delta_h = \Gamma^{0,\delta}(\int_0^\cdot h(s)ds)$and the whole sequence $\mathbf{y}_n$ converges to $\mathbf{y}^\delta_h$ . This completes the proof of Theorem \ref{THM:LDP1}.
\end{proof}
\section{The deviation principles result and its proofs}
\label{sec:5Deviation}
\subsection{Formulation of the main results}
This section is the heart of this paper. We will state and prove our main results in this section, but before doing so we briefly recall few definitions  from the LDP theory. 

Let $\mathcal{E}$ be a Polish space and $\mathcal{B}(\mathcal{E})$ its Borel $\sigma$-algebra.
\begin{Def}
	A function $I: \mathcal{E} \to [0,\infty]$ is a (good) rate function if it is lower semicontinuous and the level sets $\{e\in\mathcal{E}; I(e)\le a   \}$, $a\in [0,\infty)$, are compact subsets of $\mathcal{E}$.
\end{Def}
Next let $\varrho$ be a real-valued map defined on $[0,\infty)$ such that 
$$ \varrho(\eps) \to  \infty \text{ as } \eps \to \infty.$$
\begin{Def}
	Let $(\Omega, \mathscr{F},\mathbb{P})$ be a complete probability space. An $\mathcal{E}$-valued random variable $(X_\eps)_{\eps \in (0,1]}$ satisfies the LDP on $\mathcal{E}$ with speed $\varrho(\eps)$ and rate function $I$ if and only if the following two conditions hold
	\begin{enumerate}[label = (\alph{*})]
		\item for any closed set $F\subset \mathcal{E}$
		$$ \limsup_{\eps \to 0} \varrho^{-1}(\eps) \log \mathbb{P}(X_\eps \in F) \le - \inf_{x\in F} I(x);$$
		\item for any open set $O\subset \mathcal{E}$
		$$ \liminf_{\eps \to 0}\varrho^{-1}(\eps) \log \mathbb{P}(X_\eps \in O)\ge -\inf_{x\in O} I(x). $$
	\end{enumerate} 
\end{Def}
We are now ready to state our main results. 
\begin{Thm}\label{Main-Theorem-Delta}
	Let $\delta \in \{0,1\}$, $\xi \in D(\rA^{\frac{1}{2}})$ and Assumption \ref{Assum-Uniqness} hold. Then, the family $(\bu^{\alpha,\delta})_{\alpha \in (0,1]}$ satisfies an LDP on $C([0,T];\h)\cap \el^2(0,T;D(\rA^{\frac{1}2 }))$ with speed $\alpha^{-1}\lambda_\delta^{2}(\alpha)$ and rate function $I_\delta$ given by 
	\begin{equation*}
	I_\delta(x )= \inf_{\{h \in \el^2(0,T; \mathrm{K}): x = \Gamma^{0,\delta}_\xi (\int_0^{\cdot} h(r) dr) \}  } \biggl\{\frac12 \int_0^T \lVert h(r)\rVert^2_{\mathrm{K} } dr \biggr\}.
	\end{equation*}
	As usual, we understand that $\inf \emptyset = \infty.$ 
\end{Thm}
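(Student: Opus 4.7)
The strategy is to apply the Budhiraja--Dupuis weak-convergence characterization of the LDP with the small-noise parameter $\varepsilon(\alpha):=\alpha\lambda_\delta^{-2}(\alpha)$, so that the noise coefficient in \eqref{Eq: ABS-SVE} equals $\sqrt{\varepsilon(\alpha)}\,\G$ and the announced speed is $\varepsilon(\alpha)^{-1}=\alpha^{-1}\lambda_\delta^2(\alpha)$. Using the measurable solution map $\Gamma^{\alpha,\delta}_\xi$ from Remark \ref{Rem-Exist-G2} and the Girsanov identity established in Part I of the proof of Theorem \ref{THM:SCNS-alpha-delta1}, the controlled process admits the representation
\[\mathbf{y}^{\alpha,\delta}_h=\Gamma^{\alpha,\delta}_\xi\!\Bigl(W+\alpha^{-1/2}\lambda_\delta(\alpha)\!\int_0^{\cdot}\! h(r)\,dr\Bigr),\qquad h\in\mathscr{A}_M.\]
By the Budhiraja--Dupuis--Maroulas criterion \cite{BuDu,AB+PD+VM} the announced LDP then reduces to two verifications: \textbf{(i)} for every $M>0$ the set $\{\Gamma^{0,\delta}_\xi(\int_0^{\cdot}h\,dr):h\in\mathcal{A}_M\}$ is compact in the target space, which is exactly Theorem \ref{THM:LDP1}; and \textbf{(ii)} for every family $(h_\alpha)\subset\mathscr{A}_M$ converging in distribution, as $\mathcal{A}_M$-valued random elements with the weak topology \eqref{Eq:Metric-SM}, to some $h\in\mathscr{A}_M$, the controlled processes $\mathbf{y}^{\alpha,\delta}_{h_\alpha}$ converge in distribution to $\mathbf{y}^\delta_h:=\Gamma^{0,\delta}_\xi(\int_0^{\cdot}h(r)\,dr)$ in $C([0,T];\h)\cap\el^2(0,T;D(\rA^{1/2}))$.

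For \textbf{(ii)} I would combine the uniform moment estimate \eqref{Estim} of Theorem \ref{THM:SCNS-alpha-delta1} (which is uniform in $\alpha\in(0,1]$ thanks to Remark \ref{Rem:Lambdaalpha}) with a uniform bound on a suitable fractional time derivative of $\mathbf{y}^{\alpha,\delta}_{h_\alpha}$, obtained by testing \eqref{SCNS-alpha-delta} against elements of $D(\rA)$ and invoking \eqref{2.15}, \eqref{2.15-B}, Lemma \ref{lemma3.4} and Assumption \ref{Assum-Uniqness}. A standard Aubin--Lions--Simon argument then gives tightness of the laws of $\mathbf{y}^{\alpha,\delta}_{h_\alpha}$ in $C([0,T];\h)\cap\el^2(0,T;D(\rA^{1/2}))$, and the joint law of $(\mathbf{y}^{\alpha,\delta}_{h_\alpha},h_\alpha,W)$ is also tight. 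Skorokhod's representation theorem yields, along a subsequence, a new stochastic basis on which these random elements converge almost surely to some $(\tilde{\mathbf{y}},h,\tilde W)$; the task is to identify $\tilde{\mathbf{y}}$ with $\mathbf{y}^\delta_h$, after which the uniqueness statement in Theorem \ref{THM:DCNS-alpha-delta} combined with a subsequence principle forces convergence of the original sequence in distribution.

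The main obstacle is the limit passage inside the $\alpha$-regularised and diverging-coefficient terms of \eqref{SCNS-alpha-delta}. Four terms demand care: the stochastic integral has BDG $\el^2(\Omega)$-bound of order $\alpha\lambda_\delta^{-2}(\alpha)\to0$ (Remark \ref{Rem:Lambdaalpha}) and hence vanishes in $\el^2(\Omega;C([0,T];\h))$; the singular drift $\delta\lambda_\delta^{-1}(\alpha)[\tilde B_\alpha(\bu,J_\alpha^{-1}\bu)-B(\bu,\bu)]$ is $O(\alpha\lambda_\delta^{-1}(\alpha))$ by \eqref{Eq:Est-DiffBandtildeB-2} and negligible; the bilinear term $\lambda_\delta(\alpha)\tilde B_\alpha(\mathbf{y}^{\alpha,\delta}_{h_\alpha},\mathbf{z}^{\alpha,\delta}_{h_\alpha})$ and the linear perturbation $\delta[\tilde B_\alpha(\bu,\mathbf{z}^{\alpha,\delta}_{h_\alpha})+\tilde B_\alpha(\mathbf{y}^{\alpha,\delta}_{h_\alpha},J_\alpha^{-1}\bu)]$ converge respectively to $(1-\delta)B(\tilde{\mathbf{y}},\tilde{\mathbf{y}})$ and $\delta[B(\bu,\tilde{\mathbf{y}})+B(\tilde{\mathbf{y}},\bu)]$ by exploiting $\lambda_\delta(\alpha)\to 1-\delta$, the spectral bounds \eqref{Eq:3.20}, and the strong $\el^2(0,T;D(\rA^{1/2}))$-compactness of $\mathbf{y}^{\alpha,\delta}_{h_\alpha}$; finally, the control term $\G(\delta\bu+\lambda_\delta(\alpha)\mathbf{y}^{\alpha,\delta}_{h_\alpha})h_\alpha$ is handled as in the proof of \eqref{Eq:CONV-CONTROL}, by combining the Lipschitz continuity of $G$ with the strong convergence of $\mathbf{y}^{\alpha,\delta}_{h_\alpha}$ and the weak convergence of $h_\alpha$ in $\el^2(0,T;\rK)$. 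The trickiest estimates are those involving $\alpha^2\rA\mathbf{y}^{\alpha,\delta}_{h_\alpha}$ inside $\mathbf{z}^{\alpha,\delta}_{h_\alpha}$, which are not uniformly bounded in strong norms and must be absorbed by the weighted norm $\lVert\cdot\rVert_\alpha$ together with the operator bounds underlying Lemma \ref{lemma3.4}.
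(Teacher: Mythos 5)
Your reduction to the Budhiraja--Dupuis--Maroulas criterion with $\varrho(\alpha)=\alpha^{-1/2}\lambda_\delta(\alpha)$, and your identification of the compactness condition with Theorem \ref{THM:LDP1}, coincide exactly with the paper's proof. The divergence, and the problem, is in your verification of the weak-convergence condition \textbf{(ii)}.

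You propose to obtain tightness of the laws of $\mathbf{y}^{\alpha,\delta}_{h_\alpha}$ in $C([0,T];\h)\cap \el^2(0,T;D(\rA^{1/2}))$ by an Aubin--Lions--Simon argument based on the uniform estimate \eqref{Estim}. This step fails. The only bounds in \eqref{Estim} that are uniform in $\alpha$ are $\mathbb{E}\sup_t\lvert \mathbf{y}^{\alpha,\delta}_{h_\alpha}(t)\rvert^{2p}$ and $\mathbb{E}\int_0^T\lVert \mathbf{y}^{\alpha,\delta}_{h_\alpha}\rVert_{\ve}^2\,ds$; the higher-order spatial information is carried by $\alpha^2\lvert\rA\,\cdot\,\rvert^2$ inside $\lVert\cdot\rVert_\alpha$ and degenerates as $\alpha\to 0$. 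Compactness (hence tightness) in the \emph{strong} topology of $\el^2(0,T;D(\rA^{1/2}))$ would require a uniform bound in $\el^2(0,T;D(\rA^{s}))$ for some $s>\tfrac12$, and compactness in $C([0,T];\h)$ would require a uniform $\el^\infty(0,T;\ve)$-type bound together with time equicontinuity; neither is available uniformly in $\alpha$. With the estimates at hand you can only get tightness in weaker topologies such as $\el^2(0,T;\h)$ or $C([0,T];D(\rA^{-s}))$, which does not suffice to verify \ref{BD-Assum1} in the space where the LDP is claimed, and you give no mechanism for upgrading the convergence afterwards.

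The paper avoids this obstruction entirely by never seeking tightness of the stochastic controlled processes. Instead (Lemma \ref{Lem-Prop-Imp-1}) it writes $\mathbf{w}_n=\mathbf{y}^{\alpha_n,\delta}_{h_n}-\Gamma^{0,\delta}_\xi(\int_0^\cdot h_n)$, i.e.\ it subtracts the \emph{deterministic} controlled solution driven by the \emph{same random control} $h_n$, and performs a direct energy estimate on the difference equation (with a stopping time in $\lvert\mathbf{y}_n\rvert$ and the Gronwall lemma of \cite[Lemma A.1]{Chueshov+Millet}) to show $\sup_t\lvert\mathbf{w}_n\rvert^2+\int_0^T\lvert\rA^{1/2}\mathbf{w}_n\rvert^2\,ds\to 0$ in probability; all the singular terms you list are absorbed there against the dissipation and the (non-degenerate, deterministic) bounds \eqref{Eq:Est-Det-Control} on $\mathbf{z}_n$. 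Separately, Lemma \ref{Lem-Prop-Imp-2} applies Skorokhod's theorem only to the controls $h_n$, which live in the compact Polish space $\mathcal{A}_M$, and uses the continuity of $\Gamma^{0,\delta}_\xi$ on $\mathcal{A}_M$ (a by-product of Theorem \ref{THM:LDP1}) to get convergence in distribution of the deterministic controlled solutions. The two pieces are then glued by the convergence-together theorem \cite[Theorem 11.3.3]{Dudley}. If you want to keep your tightness-based route, you would have to either prove convergence first in a weak topology and then establish strong convergence by a supplementary energy argument, or adopt the paper's difference scheme; as written, your step \textbf{(ii)} has a genuine gap.
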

\begin{proof}
	The proof requires few preparations and hence it will be postponed to  Subsection \ref{Sec:Proof-of-Main-Thm}.
\end{proof}
We can divide the result in the above theorem into two parts which will form the following two corollaries. They give the LDP and MDP on $C([0,T];\h)\cap \el^2(0,T; D(\rA^{\frac{1}2 }))$ for the solution $\bu^\alpha$ to \eqref{Eq: ABS-LANS-ALPHA}. 
\begin{Cc}
	Let $s\in \{0,1\}$, $\xi \in D(\rA^{\frac{1}2 })$ and  $G$ satisfies Assumption \ref{Assum-Uniqness}. Then, the family of solutions $(\bu^\alpha)_{\alpha \in (0,1]}$ to  \eqref{Eq: ABS-LANS-ALPHA} satisfies an LDP on $C([0,T];D(\rA^{1+\frac{s}2} ))$ with speed $\alpha^{-1}$ and  rate function $I_0$ given by 
	\begin{equation}
	I_0(x)= \inf_{\{h \in \el^2(0,T; \mathrm{K}): x= \Gamma^{0,0}_\xi (\int_0^{\cdot} h(r) dr) \}  } \biggl\{\frac12 \int_0^T \lVert h(r)\rVert^2_{\mathrm{K} } dr \biggr\}.
	\end{equation} 
\end{Cc}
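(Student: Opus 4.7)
My plan is to obtain this corollary as an immediate specialization of Theorem \ref{Main-Theorem-Delta} to $\delta = 0$. Two simple identifications drive the argument, and there is essentially no new analytic work to do.

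First, I would recall the remark already made just after the introduction of the abstract controlled system \eqref{Eq: ABS-SVE}: when $\delta = 0$ all the terms multiplied by $\delta$ vanish, the initial datum becomes $(1-0)\xi = \xi$, and the noise coefficient reduces to $\alpha^{1/2}\G(\ydela)$. In other words, equation \eqref{Eq: ABS-SVE} with $\delta = 0$ coincides with the abstract LANS-$\alpha$ system \eqref{Eq: ABS-LANS-ALPHA}. By the pathwise uniqueness result proved in Section \ref{Assum-Noise}, the solution $\mathbf{y}^{\alpha,0}$ produced by Theorem \ref{THM:Exist-1} and the unique strong solution $\bu^\alpha$ of \eqref{Eq: ABS-LANS-ALPHA} therefore coincide $\mathbb{P}$-a.s.

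Second, I would record that by the very definition of $\lambda_\delta(\alpha)$ one has $\lambda_0(\alpha) \equiv 1$, so the speed $\alpha^{-1}\lambda_\delta^2(\alpha)$ appearing in Theorem \ref{Main-Theorem-Delta} reduces to $\alpha^{-1}$, and the skeleton map $\Gamma^{0,0}_\xi$ of Theorem \ref{Main-Theorem-Delta} is exactly the map entering the definition of $I_0$. Applying Theorem \ref{Main-Theorem-Delta} with $\delta = 0$ to the family $(\mathbf{y}^{\alpha,0})_{\alpha \in (0,1]} = (\bu^\alpha)_{\alpha \in (0,1]}$ therefore yields the LDP with speed $\alpha^{-1}$ and rate function
\begin{equation*}
I_0(x) = \inf_{\{h \in \el^2(0,T;\rK)\,:\, x = \Gamma^{0,0}_\xi(\int_0^{\cdot} h(r)\, dr)\}} \frac{1}{2}\int_0^T \lVert h(r)\rVert^2_{\rK}\, dr,
\end{equation*}
which is exactly the content of the corollary.

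The only point that would deserve a brief sentence of justification is the matching of the underlying path spaces, since Theorem \ref{Main-Theorem-Delta} delivers the LDP on $C([0,T];\h)\cap \el^2(0,T;D(\rA^{1/2}))$ whereas the corollary is phrased on $C([0,T]; D(\rA^{1+s/2}))$ for $s \in \{0,1\}$. I would handle this by applying the contraction principle to the continuous inclusion of the path space of Theorem \ref{Main-Theorem-Delta} into the target space of the corollary; the rate function $I_0$ is preserved verbatim under this transfer. I do not expect any genuine obstacle here: the corollary is a pure relabeling of Theorem \ref{Main-Theorem-Delta} at $\delta = 0$, and all the heavy lifting (weak-convergence verification of the Budhiraja--Dupuis criterion, tightness estimates, passage to the limit in the controlled system, etc.) has been carried out upstream.
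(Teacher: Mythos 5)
Your core argument is exactly the paper's: the corollary is stated without any separate proof precisely because it is the specialization of Theorem \ref{Main-Theorem-Delta} to $\delta=0$, using $\lambda_0(\alpha)\equiv 1$ (so the speed $\alpha^{-1}\lambda_\delta^2(\alpha)$ becomes $\alpha^{-1}$) and the identification $\mathbf{y}^{\alpha,0}=\bu^\alpha$ guaranteed by pathwise uniqueness. Up to that point there is nothing to object to.

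However, your proposed handling of the path-space mismatch is not correct. Theorem \ref{Main-Theorem-Delta} delivers the LDP on $C([0,T];\h)\cap \el^2(0,T;D(\rA^{\frac12}))$, while the corollary as printed is phrased on $C([0,T];D(\rA^{1+\frac{s}{2}}))$, which is a \emph{strictly smaller} space carrying a \emph{finer} topology (it is $C([0,T];\h^{2+s})$). The contraction principle transfers an LDP along a continuous map \emph{from} the space where the LDP holds \emph{to} another space; here the continuous inclusion runs in the opposite direction, $C([0,T];D(\rA^{1+\frac{s}{2}}))\hookrightarrow C([0,T];\h)$, and the identity map from $C([0,T];\h)\cap \el^2(0,T;D(\rA^{\frac12}))$ into $C([0,T];D(\rA^{1+\frac{s}{2}}))$ is not even well defined, let alone continuous. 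So the step "apply the contraction principle to the continuous inclusion of the path space of the theorem into the target space of the corollary" would fail. The honest resolution is that the space (and the otherwise unused parameter $s$) in the corollary's statement is inconsistent with both Theorem \ref{Main-Theorem-Delta} and the sentence immediately preceding the two corollaries, which announces the LDP on $C([0,T];\h)\cap \el^2(0,T;D(\rA^{\frac12}))$; the corollary should be read on that space, and then it is indeed an immediate instance of the theorem with no further argument needed.
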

\begin{Cc}
	If $\xi \in D(\rA^{\frac{1}2 })$ and  $G$ satisfies Assumption \ref{Assum-Uniqness}, then $\left(\alpha^{-\frac12}\lambda^{-1}(\alpha)[\bu^\alpha-\bu]\right)_{\alpha \in (0,1]}$ satisfies an LDP on $C([0,T];\h)\cap \el^2(0,T; D(\rA^{\frac{1}2}))$ with speed $\lambda^2(\alpha)$ and rate function $I_1$ given by 
	\begin{equation}
	I_1(x)= \inf_{\{h\in \el^2(0,T; \mathrm{K}): x= \Gamma^{0,1}_\xi (\int_0^{\cdot} h(r) dr) \}  } \biggl\{\frac12 \int_0^T \lVert h(r)\rVert^2_{\mathrm{K} } dr \biggr\}.
	\end{equation} 
\end{Cc}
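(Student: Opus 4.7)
The plan is to deduce this corollary as a direct specialization of Theorem \ref{Main-Theorem-Delta} to the case $\delta = 1$, so that essentially no new work is required beyond reading off the ingredients correctly. First I would recall the identification made in the remark following \eqref{Eq: ABS-SVE}: when $\delta = 1$, the unique strong solution ${\bf y}^{\alpha,1}$ of the abstract stochastic evolution equation \eqref{Eq: ABS-SVE} coincides with $\alpha^{-1/2}\lambda^{-1}(\alpha)[\bu^{\alpha} - \bu]$, where $\bu^{\alpha}$ is the solution of \eqref{Eq: ABS-LANS-ALPHA} and $\bu$ is the deterministic NSE solution with initial data $\xi$. Uniqueness for \eqref{Eq: ABS-SVE} from Theorem \ref{THM:Exist-1} guarantees that this identification is unambiguous, so any LDP statement for ${\bf y}^{\alpha,1}$ transfers verbatim to the normalized deviation process in the corollary.

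Next I would verify the speed. From the definition preceding Remark \ref{Rem:Lambdaalpha} we have $\lambda_1(\alpha) = \alpha^{1/2}\lambda(\alpha)$, and hence
\begin{equation*}
\alpha^{-1} \lambda_1^2(\alpha) = \alpha^{-1} \cdot \alpha \lambda^2(\alpha) = \lambda^2(\alpha).
\end{equation*}
By \eqref{Eq:Def-Lambda} this speed tends to infinity as $\alpha \to 0$ and indeed grows more slowly than $\alpha^{-1}$, which is the regime characteristic of an MDP that bridges the Central Limit Theorem and the classical LDP.

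For the rate function, I would simply specialize the expression for $I_\delta$ in Theorem \ref{Main-Theorem-Delta} to $\delta = 1$. This gives exactly
\begin{equation*}
I_1(x) = \inf_{\{h \in \el^2(0,T;\mathrm{K}) \,:\, x = \Gamma^{0,1}_{\xi}(\int_0^{\cdot} h(r)\,dr)\}} \left\{ \tfrac{1}{2}\int_0^T \|h(r)\|^2_{\mathrm{K}}\,dr \right\},
\end{equation*}
which matches the statement. The well-posedness of the deterministic controlled system with $\delta = 1$, established in Theorem \ref{THM:DCNS-alpha-delta}, together with the Borel measurability observed in Remark \ref{Rem-Borel-Meas}, ensures that $\Gamma^{0,1}_\xi$ is a legitimate measurable map and that $I_1$ is a good rate function (compactness of sublevel sets follows from Theorem \ref{THM:LDP1}). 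Assembling these three observations finishes the proof once Theorem \ref{Main-Theorem-Delta} is granted.

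The only real obstacle is therefore not here but inside Theorem \ref{Main-Theorem-Delta} itself, whose proof (via the Budhiraja--Dupuis weak-convergence approach and the uniform bounds of Section \ref{sec:4 Analysis}) must handle simultaneously the passage to the limit in the singularly perturbed term $\lambda_\delta^{-1}(\alpha)[\tilde{B}_\alpha(\bu,J_\alpha^{-1}\bu) - B(\bu,\bu)]$, the vanishing of $\alpha$ in $J_\alpha$, and the Girsanov change of measure. Once that is done, the present corollary is merely a rephrasing.
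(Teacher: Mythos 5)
Your proposal is correct and follows exactly the route the paper intends: the corollary is obtained by specializing Theorem \ref{Main-Theorem-Delta} to $\delta=1$, using the identification $\mathbf{y}^{\alpha,1}=\alpha^{-\frac12}\lambda^{-1}(\alpha)[\bu^\alpha-\bu]$ and the computation $\alpha^{-1}\lambda_1^2(\alpha)=\lambda^2(\alpha)$. The paper offers no separate proof beyond this observation, so your write-up matches its argument in full.
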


\subsection{Intermediate results}
In order to prove Theorem \ref{Main-Theorem-Delta} we will use the weak convergence approach to LDP and Budhiraja-Dupuis' results on representation of functionals of Brownian motion, see \cite{BuDu} and \cite{AB+PD+VM}. These require few intermediate results which are stated and proved below.
\begin{Lem}\label{Prop:Conv-In-Proba}\label{Lem-Prop-Imp-1}
	Let $M>0,\,(h_n)_n\subset \mathcal{A}_M$ and $(\alpha_n)_{n \in \mathbb{N}}$ be sequence such that $\alpha_n \to 0$ as $n \to \infty$.  Let $\mathbf{y}_n = \Gamma^{\alpha_n,\delta}\left(W + \alpha_n^{-\frac12}\lambda_\delta(\alpha_n)\int_0^\cdot h_n(s)ds \right)$ and $\mathbf{z}_n = \Gamma^{0,\delta}\left(\int_0^\cdot h_n(s)ds \right)$. 
	Then, for any $\eps>0$
	\begin{align*}
		\lim_{n\to\infty}\mathbb{P}\left(\left[\sup_{t\in [0,T]}|\mathbf{y}_n(t) - \mathbf{z}_n(t)|^2 + \int_0^{T} |\rA^\frac12(\mathbf{y}_n-\mathbf{z}_n)|^2 ds\right] >\eps\right) = 0 .
	\end{align*}
\end{Lem}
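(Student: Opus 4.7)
The plan is to form the difference $\mathbf{r}_n := \mathbf{y}_n - \mathbf{z}_n$ and extract an energy estimate in the topology of $C([0,T];\h)\cap \el^2(0,T;D(\rA^{1/2}))$. Subtracting the deterministic controlled equation \eqref{DCNS-alpha-delta} for $\mathbf{z}_n$ from the stochastic controlled equation \eqref{SCNS-alpha-delta} for $\mathbf{y}_n$ yields an It\^o equation for $\mathbf{r}_n$ in which one must balance four kinds of source terms: nonlinear mismatches between $\lambda_\delta(\alpha_n)\tilde{B}_{\alpha_n}(\mathbf{y}_n,\mathbf{y}_n+\alpha_n^2\rA\mathbf{y}_n)$ and the Navier--Stokes-type nonlinearities in the $\mathbf{z}_n$-equation (together with the linearized terms $\delta\tilde{B}_{\alpha_n}(\bu,\cdot)$ and $\delta\tilde{B}_{\alpha_n}(\cdot,J_{\alpha_n}^{-1}\bu)$ versus $\delta B(\bu,\mathbf{z}_n)+\delta B(\mathbf{z}_n,\bu)$); the perturbation $\delta\lambda_\delta^{-1}(\alpha_n)[\tilde{B}_{\alpha_n}(\bu,J_{\alpha_n}^{-1}\bu)-B(\bu,\bu)]$; the controlled drift $[\mathbf{G}_{\alpha_n}(\delta\bu+\lambda_\delta(\alpha_n)\mathbf{y}_n)-G(\delta\bu+(1-\delta)\mathbf{z}_n)]h_n$; and the small stochastic integral with coefficient $\alpha_n^{1/2}\lambda_\delta^{-1}(\alpha_n)\mathbf{G}_{\alpha_n}(\cdots)$. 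Applying It\^o's formula to $|\mathbf{r}_n(t)|^2$ in $\h$ produces the dissipation $2\int_0^t|\rA^{1/2}\mathbf{r}_n|^2 ds$ on the left-hand side, against which the four groups above must be absorbed.

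For the nonlinear group I would rewrite each summand by adding and subtracting intermediate expressions so that at least one argument is $\mathbf{r}_n$, and then split into \emph{geometric} pieces, whose quadratic form is controlled via \eqref{2.15-B}, \eqref{Eq:linear-pert} and the cancellation \eqref{Eq:Cancel-Balpha}, and \emph{vanishing} pieces encoding the discrepancy between $\tilde{B}_{\alpha_n}(\cdot,J_{\alpha_n}^{-1}\cdot)$ and $B(\cdot,\cdot)$, which are quantified by \eqref{Eq:EstfromCao-Titi-}--\eqref{Eq:Est-DiffBandtildeB-2} and produce prefactors of the form $\alpha_n^k\lambda_\delta^{-\ell}(\alpha_n)$ that tend to $0$ by Remark \ref{Rem:Lambdaalpha}. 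The geometric pieces, after Young's inequality, are absorbed into the dissipation and into a Gronwall weight proportional to $|\rA\mathbf{z}_n|^2+|\rA\bu|^2+\lVert h_n\rVert_{\rK}^2$, integrable in $t$ uniformly in $n$ thanks to \eqref{Eq:Est-Det-Control} and $h_n\in\mathcal{A}_M$. The perturbation group is the most delicate: testing \eqref{Eq:Est-DiffBandtildeB-2} against $\mathbf{r}_n$, the $\alpha_n$ there combines with $\lambda_\delta^{-1}(\alpha_n)$ to give a prefactor of order $\alpha_n\lambda_\delta^{-1}(\alpha_n)\to 0$, which holds both for $\delta=0$ (when the term is absent) and for $\delta=1$ (by Remark \ref{Rem:Lambdaalpha}). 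The controlled-drift group is treated by splitting $\mathbf{G}_{\alpha_n}-G=(J_{\alpha_n}-I)G+J_{\alpha_n}[G(\cdot)-G(\cdot)]$, using the Lipschitz bound of Assumption \ref{Assum-Uniqness} and the fact $\lambda_\delta(\alpha_n)\to 1-\delta$ to produce a Lipschitz contribution $C(1+\lVert h_n\rVert_{\rK}^2)|\mathbf{r}_n|^2$ plus a vanishing remainder; and the stochastic integral is controlled by the Burkholder--Davis--Gundy inequality, contributing a term of order $\alpha_n\lambda_\delta^{-2}(\alpha_n)\to 0$.

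Having assembled these estimates, I would introduce the localizing stopping time
\[
\tau_n^R := T\wedge \inf\Bigl\{t\in [0,T]:\ \lVert\mathbf{y}_n(t)\rVert_{\alpha_n}^2+\int_0^t\bigl(|\rA\mathbf{y}_n|^2+|\rA\mathbf{z}_n|^2+|\rA\bu|^2\bigr)ds>R\Bigr\},
\]
apply stochastic Gronwall on $[0,t\wedge\tau_n^R]$ and take expectations to obtain
\[
\mathbb{E}\Bigl[\sup_{s\in[0,t\wedge\tau_n^R]}|\mathbf{r}_n(s)|^2+\int_0^{t\wedge\tau_n^R}|\rA^{1/2}\mathbf{r}_n|^2\,ds\Bigr]\le C_R\,\eta_n,
\]
with $\eta_n\to 0$ independent of $R$. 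The a priori bounds \eqref{Estim} (with $p=1$) and \eqref{Eq:Est-Det-Control}, uniform in $n$ since $h_n\in\mathcal{A}_M$, let one choose $R$ so that $\sup_n\mathbb{P}(\tau_n^R<T)$ is as small as desired; Markov's inequality on $\{\tau_n^R=T\}$ then yields the claimed convergence in probability. The main obstacle I anticipate is the bookkeeping in the nonlinear group and in the $\delta=1$ perturbation term: one must check that every $\alpha_n^k\lambda_\delta^{-\ell}(\alpha_n)$ error prefactor does vanish in both regimes, and that the time-dependent Gronwall weight remains integrable uniformly in $n$.
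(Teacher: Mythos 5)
Your overall strategy coincides with the paper's: form the difference $\mathbf{w}_n=\mathbf{y}_n-\mathbf{z}_n$ (your $\mathbf{r}_n$), run an It\^o energy estimate, split the source terms into pieces absorbed by the dissipation or by a Gronwall weight and pieces carrying vanishing prefactors $\alpha_n^k\lambda_\delta^{-\ell}(\alpha_n)$, localize by a stopping time, apply a stochastic Gronwall lemma, and conclude via Markov's inequality combined with the a priori bound \eqref{Estim}. There is, however, a concrete gap in your choice of energy functional. You apply It\^o's formula to $|\mathbf{r}_n|^2$ in $\h$, whereas the paper applies it to $\lVert \mathbf{w}_n\rVert_{\alpha_n}^2=|\mathbf{w}_n|^2+\alpha_n^2|\rA^{\frac12}\mathbf{w}_n|^2$, and this difference is structural, not cosmetic. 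The cancellation you invoke is, in the form actually needed, $(\tilde{B}_{\alpha_n}(\mathbf{y},J_{\alpha_n}^{-1}\mathbf{y}),J_{\alpha_n}^{-1}\mathbf{y})=\langle\tilde{B}(\mathbf{y},J_{\alpha_n}^{-1}\mathbf{y}),\mathbf{y}\rangle=0$; it becomes available precisely because It\^o applied to $\lVert\cdot\rVert_{\alpha_n}^2$ tests the equation against $J_{\alpha_n}^{-1}\mathbf{w}_n=(I+\alpha_n^2\rA)\mathbf{w}_n$. Testing against $\mathbf{w}_n$ alone produces $(\tilde{B}_{\alpha_n}(\mathbf{y}_n,J_{\alpha_n}^{-1}\mathbf{y}_n),\mathbf{w}_n)=\langle\tilde{B}(\mathbf{y}_n,\mathbf{y}_n+\alpha_n^2\rA\mathbf{y}_n),J_{\alpha_n}\mathbf{w}_n\rangle$, which does not vanish; its worst piece $\alpha_n^2\langle\tilde{B}(\mathbf{y}_n,\rA\mathbf{y}_n),J_{\alpha_n}\mathbf{w}_n\rangle$ can only be bounded by quantities of the type $|\rA^{\frac12}\mathbf{y}_n|\,|\rA\mathbf{y}_n|\,|\mathbf{w}_n|$, forcing a Gronwall weight involving $\int_0^T|\rA\mathbf{y}_n|^2\,ds$, which is \emph{not} uniformly controlled in $n$ (only $\alpha_n^2\int_0^T|\rA\mathbf{y}_n|^2\,ds$ is, by \eqref{Estim}). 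Moreover, even after the cancellation, the remaining commutator-type terms where $\rA$ falls on $\mathbf{w}_n$ (for instance $\alpha_n^2\lambda_\delta(\alpha_n)\langle\tilde{B}(\mathbf{z}_n,\rA\mathbf{w}_n),\mathbf{w}_n\rangle$) are absorbed in the paper by the extra dissipation $\alpha_n^2\int_0^t|\rA\mathbf{w}_n|^2\,ds$ that only the $\lVert\cdot\rVert_{\alpha_n}^2$ functional supplies; the $H^1$ dissipation you retain is not enough for these.

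A second, smaller issue is your stopping time: including the event $\int_0^t|\rA\mathbf{y}_n|^2\,ds>R$ is self-defeating, because at the end you must show that $\sup_n\mathbb{P}(\tau_n^R<T)$ is small, and no uniform-in-$n$ moment bound on $\int_0^T|\rA\mathbf{y}_n|^2\,ds$ is available. The paper localizes only on $|\mathbf{y}_n(t)|\geq N$, which \eqref{Estim} controls uniformly; the bounds on $\mathbf{z}_n$ and $\bu$ are deterministic and uniform in $n$, so they need no localization. Both issues disappear if you switch the It\^o functional to $\lVert\cdot\rVert_{\alpha_n}^2$ and arrange the nonlinear decomposition, as the paper does, so that $\mathbf{y}_n$ enters the Gronwall weight only through $\mathbf{w}_n$ and $\mathbf{z}_n$.
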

Before proving lemma we state the following important remark.
\begin{Rem}\label{Rem-ConvProba}
	Observe that if  $h_n\equiv 0, \; \forall n \in \mathbb{N},$ and $\delta=0$, then the above lemma gives a result on the convergence in probability of the solutions of the stochastic LANS-$\alpha$ \eqref{Eq: ABS-LANS-ALPHA} to the solution of the stochastic NSE \eqref{Eq: ABS-NSE} as $\alpha \to 0$. In fact, 
	 $\mathbf{y}_n = \Gamma^{\alpha_n,0}\left(W  \right)$ and $ \Gamma^{0,0}\left(0\right)=\bu$ are the unique solutions to the stochastic LANS-$\alpha$ \eqref{Eq: ABS-LANS-ALPHA} and to the stochastic NSE \eqref{Eq: ABS-NSE}, respectively.
\end{Rem}
\begin{proof}[Proof of Lemma \ref{Lem-Prop-Imp-1}]
Let $\mathbf{y}_n$ and $\mathbf{z}_n$ be as in the statement of the theorem.  
Let us put $\mathbf{w}_n = \mathbf{y}_n-\mathbf{z}_n$. Let $\tau_{n, N}$ be the stopping time defined  
$$\tau_{n, N} =\inf\{t\in[0,T]:|\mathbf{y}_n(t)|\geq N\}\wedge T,\, N\geq 0.$$ 
For the sake of  simplification we just write $\alpha$  instead of $\alpha_n$ throughout this proof. Also, we simply write $\tau_N$ in place of $\tau_{n,N}$.

{Since $\mathbf{y}_n$ and $\mathbf{z}_n$ are the unique solutions to the stochastic controlled and deterministic controlled systems, respectively, }
it is not difficult to see that $\mathbf{w}_n$ satisfies 
\begin{eqnarray*}
	\lqq{d\mathbf{w}_n + A\mathbf{w}_n + \lambda_\delta(\alpha)\tilde{B}_\alpha(\mathbf{y}_n,J^{-1}_\alpha\textbf{y}_n) - (1-\delta)B(\mathbf{z}_n,\mathbf{z}_n) } \\
	&&+ \delta [\tilde{B}_\alpha(\bu,J^{-1}_\alpha\mathbf{y}_n)-B(\bu,\mathbf{z}_n) + \tilde{B}_\alpha(\bu,J^{-1}_\alpha \bu)-B(\mathbf{z}_n,\bu)] \\
	&=&\delta\lambda^{-1}_\delta(\alpha)[B(\bu,\bu)-\tilde{B}_\alpha(\bu,J^{-1}_\alpha  \bu)] \\
	 &&+ \G (\Psi_n)h_n - G(\Phi_n)h + \alpha^\frac12 \lambda_\delta^{-1}(\alpha) \G (\Psi_n)dW,
\end{eqnarray*}
where $\Psi_n = \delta \bu + \lambda_\delta(\alpha)\mathbf{y}_n$ and $\Phi_n = \delta \bu + (1-\delta)\mathbf{z}_n.$
 
Let 
\begin{align*}
\mathbf{N}[\mathbf{y}_n ,  \mathbf{z}_n ] & = \lambda_{\delta} (\alpha) \tilde{B}_\alpha (\mathbf{y}_n, J_\alpha^{-1}\mathbf{y}_n) - (1 - \delta) B(\mathbf{z}_n, \mathbf{z}_n) ~~ \text{ and } \\
\mathbf{L}[\mathbf{y}_n, \mathbf{z}_n] & = \delta \left[ \tilde{B}_\alpha (\bu,  J_\alpha^{-1} \mathbf{y}_n) - B(\bu, \mathbf{z}_n) + \tilde{B}_\alpha (\mathbf{y}_n, J_\alpha^{-1} \bu) - B(\mathbf{z}_n , \bu)\right].
\end{align*}
By applying the It$\hat{\text{o}}$'s formula to $\varphi(x) = \Vert x\Vert_\alpha = |x|^2 + \alpha^2 \left\lvert \rA^{\frac{1}{2}}  x\right\rvert^2$ and to the process $\mathbf{w}_n$,  taking the supremum and the mathematical expectation to the  resulting  equation we obtain
\begin{align}
&\Vert \mathbf{w}_n (t \wedge \tau_N)\Vert_\alpha^2 + 2\int_{0}^{t \wedge \tau_N} \left[ \left\lvert \rA^{\frac{1}{2}}  \mathbf{w}_n (s)\right\rvert^2  + \alpha^2 \left\lvert \rA   \mathbf{w}_n (s)\right\rvert^2 \right] ds \notag\\
&\le  2 \int_{0}^{t \wedge \tau_N}\lvert  (\mathbf{N} [\mathbf{y}_n, \mathbf{z}_n] + \mathbf{L}[\mathbf{y}_n, \mathbf{z}_n], J_\alpha^{-1} \mathbf{w}_n)\rvert ds \notag \\
& + 2 \delta \lambda_{\delta}^{-1}(\alpha)  \int_{0}^{t \wedge \tau_N}\lvert ( B(\bu, \bu) - \tilde{B}_\alpha (\bu , J_\alpha^{-1} \bu) , J_\alpha^{-1} \mathbf{w}_n) \rvert ds\\
& + 2 \int_{0}^{t \wedge \tau_N} \lvert (\G  (\Psi_n ) h_n ,\mathbf{G}(\Phi) h_n , J_\alpha^{-1} \mathbf{w}_n )\rvert  ds \notag \\
& + \alpha \lambda_{\delta}^{-2} (\alpha) \mathbb{E} \int_{0}^{t \wedge \tau_N} \Vert G(\Psi_n)\Vert_{\mathscr{L} (\rK, \h)}^2 ds + \alpha^\frac12 \lambda_{\delta}^{-1} (\alpha) \int_{0}^{t \wedge \tau_N} (\mathbf{w}_n , G (\Psi_n) dW ). \label{Eq:Conv-Prob-Start}
\end{align}
Using \eqref{Eq:Est-DiffBandtildeB} and the Young's inequality we see that
\begin{align}
& 2 \delta \lambda_{\delta}^{-1}(\alpha)  \int_{0}^{t \wedge \tau_N} ( B(\bu, \bu) - \tilde{B}_\alpha (\bu , J_\alpha^{-1} \bu) , J_\alpha^{-1} \mathbf{w}_n) ds \notag\\
& \leq 2 \delta \lambda_{\delta}^{-1}(\alpha) \int_{0}^{t \wedge \tau_N}\left( \left[ \dfrac{\alpha}{2} \left\lvert \rA^{\frac{1}{2}}\mathbf{w}_n\right\rvert + \alpha^2 |\rA \mathbf{w}_n|^2  \right]|B(\bu, \bu)| + C \alpha^2 \left\lvert \rA^{\frac{1}{2}} \mathbf{w}_n\right\rvert^2 |\rA \bu|^2\right) ds \notag\\
& \leq \int_{0}^{t \wedge \tau_N}\left(\dfrac{1}{24} \left\lvert \rA^{\frac{1}{2}} \mathbf{w}_n \right\rvert^2 + \dfrac{\alpha^2}{24} |\rA \mathbf{w}_n|^2 + C \left[\alpha^2 \lambda_{\delta}^{-1}(\alpha) \left\lvert \rA^{\frac{1}{2}} \bu \right\rvert^2 + \alpha^2 \right] |\rA\bu|^2 \right) ds. \label{Eq:Est-ProbConv-0}
\end{align}
It follows from the bilinearity of $B$ and $\tilde{B}$, and the equations \eqref{Eq:Cancel-Prop-B}, \eqref{Eq:Cancel-Prop-tildeB} that 
\begin{equation}
\begin{split}
&\lambda_{\delta}(\alpha) \left(\tilde{B}_\alpha (\mathbf{y}_n, J_\alpha^{-1} \mathbf{y}_n) - \tilde{B}_\alpha (\mathbf{z}_n, J_\alpha^{-1} \mathbf{z}_n) , J_\alpha^{-1} \mathbf{w}_n \right)\\
& \quad = \lambda_{\delta}(\alpha) \left( \tilde{B} (\mathbf{w}_n, \mathbf{w}_n),  \mathbf{z}_n \right) + \alpha^2 \lambda_{\delta} (\alpha) \left( \tilde{B} (\mathbf{z}_n, A \mathbf{w}_n) ,
\mathbf{w}_n\right). 
\end{split} 
\end{equation}

Thanks to the estimate \eqref{refine-2.10} and the Young inequality we obtain
\begin{align}
\lambda_{\delta}(\alpha) \left( B(\mathbf{w}_n, \mathbf{w}_n) , \mathbf{z}_n\right) 
& = - \lambda_{\delta}(\alpha) \left( B(\mathbf{w}_n, \mathbf{z}_n) , \mathbf{w}_n\right) \notag\\
& \leq \lambda_{\delta}(\alpha) |\mathbf{w}_n| \left\lvert \rA^{\frac{1}{2}} \mathbf{z}_n \right\rvert^{\frac{1}{2}} |\rA\mathbf{z}_n|^{\frac{1}{2}} |\mathbf{w}_n|^{\frac{1}{2}} \left\lvert \rA^{\frac{1}{2}} \mathbf{w}_n \right\rvert^{\frac{1}{2}} \notag \\
& \leq \lambda_{\delta}(\alpha) |\mathbf{w}_n|^{\frac{3}{2}} \left\lvert \rA^{\frac{1}{2}} \mathbf{z}_n \right\rvert^{\frac{1}{2}} |\rA\mathbf{z}_n|^{\frac{1}{2}}   \left\lvert \rA^{\frac{1}{2}} \mathbf{w}_n \right\rvert^{\frac{1}{2}} \notag \\
& \leq \dfrac{1}{24} \left\lvert \rA^{\frac{1}{2}} \mathbf{w}_n \right\rvert^2 + \lambda_{\delta}^{\frac{4}{3}} (\alpha) |\mathbf{w}_n|^2 \left\lvert \rA^{\frac{1}{2}} \mathbf{z}_n \right\rvert^{\frac{2}{3}} |\rA\mathbf{z}_n|^{\frac{2}{3}}.
\end{align}
We now proceed in estimating the term 
$
\alpha^2 \lambda_{\delta}(\alpha)  (\tilde{B}(\mathbf{z}_n, \rA\mathbf{w}_n), \mathbf{w}_n ).
$
For doing so we utilize \eqref{2.15-B} and the Young's inequality and find that 
\begin{align}
\alpha^2 \lambda_{\delta}(\alpha)  (\tilde{B}(\mathbf{z}_n, \rA\mathbf{w}_n), \mathbf{w}_n )
& \leq \alpha^2 \lambda_{\delta}(\alpha)  |\rA \mathbf{w}_n| |\rA\mathbf{z}_n| \left\lvert \rA^{\frac{1}{2}} \mathbf{w}_n \right\rvert \notag\\
& \leq \dfrac{\alpha^2}{24} |\rA \mathbf{w}_n|^2 + C \lambda_{\delta}^2(\alpha) \alpha^2 \left\lvert \rA^{\frac{1}{2}} \mathbf{w}_n \right\rvert^2 |\rA\mathbf{z}_n|^2.
\end{align}
Thus,
\begin{align}
&\lambda_{\delta}(\alpha) \left(\tilde{B}_\alpha (\mathbf{y}_n, J_\alpha^{-1} \mathbf{y}_n) - \tilde{B}_\alpha (\mathbf{z}_n, J_\alpha^{-1} \mathbf{z}_n) , J_\alpha^{-1} \mathbf{w}_n \right) \notag \\
& \leq \dfrac{1}{24} \left\lvert \rA^{\frac{1}{2}} \mathbf{w}_n \right\rvert^2 + C \lambda_{\delta}^\frac{4}{3} (\alpha) |\mathbf{w}_n |^2 \left\lvert \rA^{\frac{1}{2}} \mathbf{z}_n \right\rvert^\frac{2}{3} |\rA \mathbf{z}_n|^\frac{2}{3} \nonumber\\
&\quad + \dfrac{\alpha^2}{24} |\rA \mathbf{w}_n|^2 + C \lambda_{\delta}^2(\alpha) |\rA\mathbf{z}_n|^2 \left(\alpha^2 \left\lvert \rA^{\frac{1}{2}} \mathbf{w}_n \right\rvert^2 + |\mathbf{w}_n|^2\right). \label{Eq:Est-ProbConv-1}
\end{align}
By using the bilinearity of $\tilde{B}$ and $B$ it is not difficult to see that 
\begin{align*}
& (\lambda_{\delta} (\alpha)  \tilde{B}_\alpha (\mathbf{z}_n, J_\alpha^{-1} \mathbf{z}_n ) - (1 - \delta) B(\mathbf{z}_n, \mathbf{z}_n) , J_\alpha^{-1} \mathbf{w}_n)\\
& =  (\lambda_{\delta} (\alpha)  \tilde{B}_\alpha (\mathbf{z}_n,   \mathbf{z}_n ) - (1 - \delta) B(\mathbf{z}_n, \mathbf{z}_n) , J_\alpha^{-1} \mathbf{w}_n) + \alpha^2 \lambda_{\delta}(\alpha) ( \tilde{B}_\alpha (\mathbf{z}_n, \rA\mathbf{z}_n) , J_\alpha^{-1} \mathbf{w}_n ) \\
& = ( [\lambda_{\delta}(\alpha) - (1 - \delta)] B(\mathbf{z}_n, \mathbf{z}_n), \mathbf{w}_n \rangle + (1 - \delta) \langle J_\alpha B(\mathbf{z}_n, \mathbf{z}_n) - B(\mathbf{z}_n, \mathbf{z}_n), J_\alpha^{-1}(\alpha)\mathbf{w}_n ) \\
&  \hspace{0.5cm}+ \alpha^2 \lambda_{\delta}(\alpha)  (\tilde{B}_\alpha (\mathbf{z}_n, A\mathbf{z}_n), \mathbf{w}_n )\\
& = R_1 + R_2 + R_3.
\end{align*}
Owing to \eqref{2.10} and the Young inequality we get:
\begin{align*}
R_1 &\leq |\lambda_{\delta}(\alpha) - (1 - \delta)| \left\lvert \rA^{\frac{1}{2}} \mathbf{z}_n \right\rvert^{\frac{3}{2}} |\rA\mathbf{z}_n|^{\frac{1}{2}} |\mathbf{w}_n|
\end{align*}
In a similar way,
\begin{align*}
R_2 & \leq (1 - \delta) \alpha^2 \langle B(\mathbf{z}_n, \mathbf{z}_n) , \rA\mathbf{w}_n \rangle \\
& \leq (1 - \delta) \alpha^2 |\mathbf{z}_n|^\frac{1}{2} \left\lvert \rA^{\frac{1}{2}} \mathbf{z}_n \right\rvert |\rA\mathbf{z}_n|^{\frac{1}{2}}  |\rA\mathbf{w}_n| \\
& \leq \dfrac{\alpha^2}{24} |\rA\mathbf{w}_n |^2 + (1 - \delta )^2 \alpha^2 |\mathbf{z}_n| \left\lvert \rA^{\frac{1}{2}} \mathbf{z}_n \right\rvert^2 |\rA\mathbf{z}_n|. 
\end{align*}
As for $R_3$ we use \eqref{2.15} and the Young inequality to obtain
\begin{align*}
R_3 & = \alpha^2 \lambda_{\delta}(\alpha) \langle \tilde{B} (\mathbf{z}_n, \rA\mathbf{z}_n) , \mathbf{w}_n \rangle \\
& \leq \alpha^2 \lambda_{\delta}(\alpha) 	\left\lvert \rA^{\frac{1}{2}} \mathbf{z}_n \right\rvert |\rA\mathbf{z}_n| |\rA\mathbf{w}_n|\\
& \leq \dfrac{\alpha^2}{24} |\rA \mathbf{w}_n|^2 + \alpha^2 \lambda_{\delta}^2(\alpha) \left\lvert \rA^{\frac{1}{2}} \mathbf{z}_n \right\rvert^2 |\rA\mathbf{z}_n|^2.
\end{align*}
Hence
\begin{align}
& (\lambda_{\delta} (\alpha)  \tilde{B}_\alpha (\mathbf{z}_n, J_\alpha^{-1} \mathbf{z}_n ) - (1 - \delta) B(\mathbf{z}_n, \mathbf{z}_n) , J_\alpha^{-1} \mathbf{w}_n) \notag\\
& \leq \dfrac{\alpha^2}{12} |\rA\mathbf{w}_n|^2 + C \alpha^2 [\lambda_{\delta}^2(\alpha) + (1 - \delta)] \left\lvert \rA^{\frac{1}{2}} \mathbf{z}_n \right\rvert^2 |\rA \mathbf{z}_n| [ |\mathbf{z}_n| + |\rA\mathbf{z}_n| ] \notag \\
& \hspace{0.5cm} + |\lambda_{\delta}(\alpha) - (1 - \delta)| \left\lvert \rA^{\frac{1}{2}} \mathbf{z}_n \right\rvert^{\frac{3}{2}} |\rA\mathbf{z}_n|^{\frac{1}{2}} |\mathbf{w}_n|. \label{Eq: Est-ProbConv-2} 
\end{align}
Combining \eqref{Eq:Est-ProbConv-1} and \eqref{Eq: Est-ProbConv-2} we see that 
\begin{equation}
\begin{split}
\lvert( - \mathbf{N} [\mathbf{y}_n, \mathbf{z}_n] , J_\alpha^{-1} \mathbf{w}_n)\rvert \le & \dfrac{3 \alpha^2}{24} |\rA\mathbf{w}_n|^2+ \dfrac{1}{24} \left\lvert \rA^{\frac{1}{2}} \mathbf{w}_n \right\rvert^2+ C \lambda_{\delta}^\frac{4}{3} (\alpha)  \left\lvert \rA^{\frac{1}{2}} \mathbf{z}_n \right\rvert^\frac{2}{3} |\rA \mathbf{z}_n|^\frac{2}{3} [|\mathbf{w}_n |^2 +\alpha^2 \lvert \rA^\frac12 \mathbf{w}_n \rvert^2] \\
& + C \lambda_{\delta}^2(\alpha) |\rA\mathbf{z}_n|^2 \left(|\mathbf{w}_n|^2+\alpha^2 \left\lvert \rA^{\frac{1}{2}} \mathbf{w}_n \right\rvert^2 \right). \\
+& |\lambda_{\delta}(\alpha) - (1 - \delta)|\left( \left\lvert \rA^{\frac{1}{2}} \mathbf{z}_n \right\rvert |\rA\mathbf{z}_n| + \lvert \rA^\frac12 \mathbf{z}_n \rvert^2\left[|\mathbf{w}_n|^2+ \alpha^2 \lvert \rA^\frac12 \mathbf{w}_n \rvert^2\right] \right)\\
+ & C \alpha^2 [\lambda_{\delta}^2(\alpha) + (1 - \delta)] \left\lvert \rA^{\frac{1}{2}} \mathbf{z}_n \right\rvert^2 |\rA \mathbf{z}_n| [ |\mathbf{z}_n| + |\rA\mathbf{z}_n| ] 
\label{Eq: Est-ProbConv-2-B} 
\end{split}
\end{equation}
Our next task is to estimate
\begin{align*}
 ( \mathbf{L}[\mathbf{y}_n, \mathbf{z}_n], J_\alpha^{-1} \mathbf{w}_n)= & \delta (\tilde{B}_\alpha(\bu , J_\alpha^{-1} \mathbf{y}_n) - B(\bu, \mathbf{z}_n) , J_\alpha^{-1} \mathbf{w}_n)  +  \delta (\tilde{B}_\alpha (\mathbf{y}_n, J_\alpha^{-1} \bu  ) - B( \mathbf{z}_n, \bu) , J_\alpha^{-1} \mathbf{w}_n) \\
& =: I+ L.
\end{align*}
Using the bilinearity of $\tilde{B}$ and \eqref{Eq:Def-Of-tildeB} we see that
\begin{align*}
I & = \delta \langle \tilde{B} (\bu, J_\alpha^{-1} \mathbf{w}_n), \mathbf{w}_n  \rangle_{D(\rA)'} + \delta  (\tilde{B}_\alpha(\bu, J_\alpha^{-1} \mathbf{z}_n) - B(\bu, \mathbf{z}_n) , J_\alpha^{-1} \mathbf{w}_n)  \\
& = -\delta (\tilde{B}(\mathbf{w}_n, \mathbf{w}_n) , \bu )+ \delta \alpha^2 \langle \tilde{B}(\bu, \rA \mathbf{w}_n) , \mathbf{w}_n\rangle_{D(\rA)'}
+ \delta (\tilde{B}_\alpha (\bu , J_\alpha^{-1} \mathbf{z}_n)  - B(\bu, \mathbf{z}_n) , J_\alpha^{-1} \mathbf{w}_n).
\end{align*}
By denoting  $I_1$ and $I_2$ the first two terms on the right hand side of the above equation and using the bilinearity of $\tilde{B}$ again and \eqref{Eq:Skew-Symm-tildeB} we find that
\begin{align*}
I = I_1 + I_2 + \delta \alpha^2 \langle \tilde{B}(\bu, \rA \mathbf{z}_n) , \mathbf{w}_n \rangle_{D(\rA)'}  - \delta \alpha^2 \langle B(\bu, \mathbf{z}_n) , \rA\mathbf{w}_n \rangle_{D(\rA)'} -\delta  (B(\mathbf{w}_n, \mathbf{z}_n) , \bu ) .
\end{align*}
In a similar way we can show that
\begin{align*}
L &=  \delta  (\tilde{B}_\alpha(\mathbf{y}_n, J_\alpha^{-1}\bu) - B(\mathbf{z}_n, \bu), J_\alpha^{-1}\mathbf{w}_n)  \\
&=    \delta \alpha^2 \langle \tilde{B}(\mathbf{z}_n,\rA \bu) , \mathbf{w}_n \rangle_{D(\rA)'} + \delta \alpha^2 \langle B(\mathbf{z}_n, \bu), \rA \mathbf{w}_n \rangle_{D(\rA)'}+\delta ( B(\mathbf{w}_n, \mathbf{z}_n) , \bu ) .
\end{align*}
Hence
\begin{align*}
 I + L 
& = I_1 + I_2 + \delta \alpha^2 \langle \tilde{B}(\bu, \rA \mathbf{z}_n) , \mathbf{w}_n \rangle_{D(\rA)'}  - \delta \alpha^2 ( B(\bu, \mathbf{z}_n) , \rA\mathbf{w}_n )\\
& \hspace{0.5cm}+ \delta \alpha^2 \langle \tilde{B}(\mathbf{z}_n,  \rA\bu) , \mathbf{w}_n \rangle_{D(\rA)'} + \delta \alpha^2 \langle B(\mathbf{z}_n, \bu), \rA \mathbf{w}_n \rangle_{D(\rA)'}\\
& =: \sum_{i = 1}^{6} I_i.
\end{align*}
In the next lines we will estimate $I_i$, $i = 1, \ldots, 6$.\\

Using \eqref{Eq:skew-symmetry}, \eqref{refine-2.10}  and the Young inequality we see that
\begin{align*}
I_1  = \delta  (B(\mathbf{w}_n, \bu), \mathbf{w}_n )
& \leq \delta |\mathbf{w}_n|^{\frac{1}{2}} \left\lvert \rA^{\frac{1}{2}} \mathbf{w}_n \right\rvert^{\frac{1}{2}} \left\lvert \rA^{\frac{1}{2}} \bu \right\rvert^{\frac{1}{2}} |\rA\bu|^{\frac{1}{2}} |\mathbf{w}_n| \\
& \leq \dfrac{1}{24} \left\lvert \rA^{\frac{1}{2}} \mathbf{w}_n \right\rvert^2 + C \delta^2 |\mathbf{w}_n|^2 \left\lvert \rA^{\frac{1}{2}} \bu \right\rvert^{\frac{2}{3}} |\rA \bu|^{\frac{2}{3}}.
\end{align*}
In a similar fashion we can show that
\begin{align*}
I_2 & = \delta ( B(\bu, \mathbf{w}_n) , \rA \mathbf{w}_n ) + \delta \alpha^2 ( B(\mathbf{w}_n, \bu) , \rA \mathbf{w}_n)  \\
& \leq \delta \alpha^2 |\rA\mathbf{w}_n| \left[ |\bu|^{\frac{1}{2}} \left\lvert \rA^{\frac{1}{2}} \bu \right\rvert^{\frac{1}{2}} \left\lvert \rA^{\frac{1}{2}} \mathbf{w}_n  \right\rvert^{\frac{1}{2}} \left\lvert \rA \mathbf{w}_n \right\rvert^{\frac{1}{2}}  + |\mathbf{w}_n|^{\frac{1}{2}} \left\lvert \rA^{\frac{1}{2}} \mathbf{w}_n \right\rvert^{\frac{1}{2}} \left\lvert \rA^{\frac{1}{2}} \bu \right\rvert^{\frac{1}{2}} \left\lvert \rA  \bu \right\rvert^{\frac{1}{2}}\right] \\
& \leq \dfrac{\alpha^2}{96} |\rA \mathbf{w}_n|^2  + C \delta^2 \alpha^2 \left[  |\bu|^2 \left\lvert \rA^{\frac{1}{2}} \bu \right\rvert^2  \left\lvert \rA^{\frac{1}{2}} \mathbf{w}_n \right\rvert^{2}  + |\mathbf{w}_n|^2 \left\lvert \rA^{\frac{1}{2}} \mathbf{w}_n \right\rvert \left\lvert \rA^{\frac{1}{2}} \bu \right\rvert |\rA \bu| \right]\\
& \leq \dfrac{\alpha^2}{96} |\rA \mathbf{w}_n|^2  + C \delta^2 \alpha^2 \left[ \left(1 + |\bu|^2 \left\lvert \rA^{\frac{1}{2}} \bu \right\rvert^2 \right) \left\lvert \rA^{\frac{1}{2}} \mathbf{w}_n \right\rvert^{2}  + |\mathbf{w}_n|^2 \left\lvert \rA^{\frac{1}{2}} \bu \right\rvert^2 |\rA \bu|^2\right].
\end{align*} 
We now proceed to the estimate of $I_3$.
By using \eqref{2.15} and Young's inequality 
\begin{align*}
I_3 & = \delta \alpha^2 \langle \tilde{B} (\bu, \rA \mathbf{z}_n) , \mathbf{w}_n \rangle_{D(\rA)'} \\
& \leq C \delta \alpha^2 |\rA \mathbf{z}_n| \left\lvert \rA^{\frac{1}{2}} \bu \right\rvert |\rA \bu| \\
& \leq C \delta^2 \alpha^2 |\rA  \mathbf{z}_n|^2 \left\lvert \rA^{\frac{1}{2}} \bu \right\rvert + \dfrac{\alpha^2}{96} |\rA \mathbf{w}_n|^2.
\end{align*}
In a similar way we show that 
\begin{align*}
I_5 & = \delta \alpha^2 \langle \tilde{B} (\mathbf{z}_n, \rA \bu), \mathbf{w}_n \rangle_{D(\rA)'} \\
& \leq C \delta^2 \alpha^2 |\rA  \mathbf{u}|^2 \left\lvert \rA^{\frac{1}{2}} \mathbf{z}_n \right\rvert + \dfrac{\alpha^2}{96} |\rA \mathbf{w}_n|^2.
\end{align*}
Finally by using \eqref{2.9}, \eqref{2.10} and the Young inequality, the term $I_4 + I_6$ can be estimated as follows
\begin{align*}
I_4 + I_6 & \leq \delta \alpha^2 |\rA \mathbf{w}_n| \left\lvert \rA^{\frac{1}{2}} \bu \right\rvert^{\frac{1}{2}} |\rA \bu|^{\frac{1}{2}} \left\lvert \rA^{\frac{1}{2}} \mathbf{z}_n \right\rvert\\
& \leq \dfrac{\alpha^2}{96} |\rA \mathbf{w}_n|^2 + \left\lvert \rA^{\frac{1}{2}} \mathbf{z}_n \right\rvert^2 \left\lvert \rA^{\frac{1}{2}} \bu \right\rvert |\rA \bu|.   
\end{align*}
Thus,
\begin{align}
&I+L= \langle \mathbf{L}[\mathbf{y}_n, \mathbf{z}_n], J_\alpha^{-1} \mathbf{w}_n\rangle  \\
&
 \leq \dfrac{\alpha^2}{24} | \rA \mathbf{w}_n|^2 + \dfrac{1}{24} \left\lvert \rA^{\frac{1}{2}} \mathbf{w}_n \right\rvert^2 + C  \delta^2 \left( |\mathbf{w}_n|^2 + \alpha^2 \left\lvert \rA^{\frac{1}{2}} \mathbf{w}_n \right\rvert^2  \right) \left( 1 + \left\lvert \rA^{\frac{1}{2}} \bu \right\rvert^2 |\rA \bu|^2 \right) \notag \\
& \hspace{0.5cm} + C \delta^2 \alpha^2 \left[ \left\lvert \rA^{\frac{1}{2}} \bu \right\rvert^2 \left( |\rA \mathbf{z}_n|^2 \left\lvert \rA^{\frac{1}{2}} \mathbf{z}_n \right\rvert^2 \right) + |\rA \bu|^2 \left\lvert \rA^{\frac{1}{2}} \mathbf{z}_n \right\rvert^2 \right]   \label{Eq: Est-ProbConv-3}
\end{align}  
We now deal with the control terms. It is not difficult to prove that
\begin{align*}
& (\G(\Phi_n)h_n - G(\Phi) h_n, J_\alpha^{-1} \mathbf{w}_n  )\notag \\
& = ( J_\alpha G(\Psi_n) h_n - J_\alpha G(\Phi_n) h_n , J_\alpha^{-1} \mathbf{w}_n )+ ( J_\alpha G(\Phi_n) h_n -  G(\Phi) h_n , J_\alpha^{-1} \mathbf{w}_n) \notag \\
& \leq | G(\Psi_n) h_n -  G(\Phi) h_n | | \mathbf{w}_n| + \langle J_\alpha G(\Phi_n) h_n -  G(\Phi_n) h_n , J_\alpha^{-1} \mathbf{w}_n \rangle.
\end{align*} 
Using the Assumption \ref{Assum-Uniqness} and the definitions of $\Psi_n$ and $\Phi_n$ yield
\begin{align*}
&( \G(\Phi_n)h_n - G(\Phi) h_n, J_\alpha^{-1} \mathbf{w}_n  )\\
& \leq |\Psi_n - \Phi_n| \Vert h_n\Vert_{\rK} |\mathbf{w}_n| + | ( J_\alpha G(\Phi_n) h_n -  G(\Phi_n) h_n , J_\alpha^{-1} \mathbf{w}_n ) | \\
& \leq C [\lambda_{\delta}(\alpha) |\mathbf{w}_n|^2+ \lvert \lambda_\delta(\alpha)-(1-\delta\rvert ) \lvert \mathbf{z}_n\rvert \lvert \mathbf{w}_n\rvert ]\Vert h_n\Vert_{\rK} + | ( J_\alpha G(\Phi_n) h_n -  G(\Phi_n) h_n , J_\alpha^{-1} \mathbf{w}_n )|. 
\end{align*}
Let us now deal with the second term on the right hand side of the last inequality. Thanks to Assumption \ref{Assum-Uniqness}, inequality \eqref{Eq:3.20} and the Young inequality we have
\begin{align*}
&|  (J_\alpha G(\Phi_n) h_n -  G(\Phi_n) h_n , J_\alpha^{-1} \mathbf{w}_n)  | \\
& \leq \alpha | (\alpha^2 \rA)^{\frac{1}{2}} J_\alpha \rA^{\frac{1}{2}} G(\Phi_n) h_n | |J_{\alpha}^{-1} \mathbf{w}_n| \\
& \leq C \alpha (1 + | \Phi_n|)\Vert h_n\Vert_{\rK} |J_\alpha^{-1} \mathbf{w}_n| \\
& \leq C \alpha^2 (1 + \delta^2 |\bu|^2 + (1-\delta)^2 |\mathbf{z}_n|^2) \Vert h_n\Vert_{\rK}^2 + C \Vert h_n\Vert_{\rK}^2 |\mathbf{w}_n|^2 + \dfrac{\alpha^2}{24} |\rA \mathbf{w}_n|^2. 	
\end{align*}     
Thus,
\begin{equation}
\begin{split}
& (\G (\Psi_n)h_n - G (\Phi_n)h_n , J_\alpha^{-1} \mathbf{w}_n ) \notag \\
 \leq & C (1 + \lambda_{\delta}(\alpha))  \Vert h_n\Vert_{\rK}^2  [	|\mathbf{w}_n|^2 + \alpha^2 \lvert \rA^\frac12 \mathbf{w}_n\rvert^2]+ \dfrac{\alpha^2}{24} |\rA \mathbf{w}_n|^2  \\ 
& +  C \alpha^2 (1 + \delta^2 |\bu|^2 + (1-\delta)^2 |\mathbf{z}_n|^2) \Vert h_n\Vert_{\rK}^2
+ \lvert \lambda_\delta(\alpha)-(1-\delta)\rvert^2 \lvert \mathbf{z}_n\rvert^2
.\label{Eq: Est-ProbConv-4}
\end{split}
\end{equation}
By using  Assumption \ref{Assum-Uniqness} and the definition of the stopping time $\tau_N$, it is not difficult to show that
\begin{align}
\alpha \lambda_{\delta}^{-2}(\alpha) \int_{0}^{t \wedge \tau_N}  \Vert G(\Psi_n)\Vert_{\mathscr{L}(\rK, \h)}^2 ds \leq&  C \alpha \lambda_{\delta}^{-2} (\alpha)\int_{0}^{t \wedge \tau_N}  (1 + \delta^2 |\bu|^2 + \lambda_{\delta}^2(\alpha) |\mathbf{y}_n|^2) ds\notag \\
\leq &C \alpha \lambda_{\delta}^{-2} (\alpha) T  (1 + \delta^2 \sup_{s\in [0,T]}|\bu(s) |^2 + \lambda_{\delta}^2(\alpha) N ) .\label{Eq: Est-ProbConv-5}
\end{align}

Before proceeding further we set 
\begin{align*}
\mathrm{Y}_n=& \left(\lambda_\delta^2(\alpha)  + \lvert \lambda_\delta(\alpha)- (1-\delta)\rvert + \lambda_\delta^\frac43(\alpha)  \right)\left(1+ \lvert \rA\mathbf{z}_n\rvert^2\right)+ \delta^2(1+ \lvert \rA^\frac12 \bu \rvert^2\lvert \rA \bu \rvert^2)\\ & + (1+ \lambda_\delta(\alpha))\lVert h_n \rVert^2_{\mathrm{K}}.
\end{align*}
\begin{align*}
\mathrm{R}_n= &  \left( \lvert \lambda_\delta(\alpha)- (1-\delta)\rvert + \alpha^2 [ \lambda_\delta^2(\alpha)+ (1-\delta)] \lvert \rA^\frac12 \mathbf{z}_n\rvert^2+ \delta^2 \alpha^2 \lvert \rA^\frac12 \bu \lvert^2 \lvert \rA^\frac12 \mathbf{z}_n \rvert^2 \right) \lvert \rA\mathbf{z}_n \rvert^2 \\
&+ \left(\alpha^2 +\alpha^2 \lambda_\delta^{-1}(\alpha) \lvert \rA^\frac12 \bu \rvert^2 + \delta^2 \alpha^2 \lvert \rA^\frac12 \mathbf{z}_n \rvert^2 \right) + \lvert \lambda_\delta(\alpha)-(1-\delta)\rvert^2 \lvert \mathbf{z}_n\rvert^2\\
& + \alpha^2  \left(1+\delta^2 \sup_{s\in [0,T]}\lvert \bu(s) \rvert^2  + (1-\delta)^2  \sup_{s\in [0,T]}|\mathbf{z}_n(s|^2 \right) \lVert h_n \rVert^2_{\mathrm{K}} \\
& + \alpha \lambda_{\delta}^{-2} (\alpha) T  (1 + \delta^2 \sup_{s\in [0,T]}|\bu(s) |^2 + \lambda_{\delta}^2(\alpha) N ) .
\end{align*}
 
Then, by plugging \eqref{Eq:Est-ProbConv-0}, \eqref{Eq: Est-ProbConv-2-B}, \eqref{Eq: Est-ProbConv-3}, \eqref{Eq: Est-ProbConv-4}, \eqref{Eq: Est-ProbConv-5} and \eqref{Eq: Est-ProbConv-6} into \eqref{Eq:Conv-Prob-Start} and using the Sobolev embedding $D(\rA)\subset D(\rA^\frac12)\subset \h$ we obtain that there exist  constants $C_0, C_1>0$ such that with probability $1$ and  for all $n \in \mathbb{N}$  
\begin{align}
&\Vert \mathbf{w}_n (t \wedge \tau_N)\Vert_\alpha^2 + \int_{0}^{t \wedge \tau_N} \left[ \frac12 \left\lvert \rA^{\frac{1}{2}}  \mathbf{w}_n (s)\right\rvert^2  + \frac{\alpha^2}2 \left\lvert \rA   \mathbf{w}_n (s)\right\rvert^2 \right] ds \notag\\
&\le \lVert \mathbf{w}_n(0) \rvert^2_\alpha+  C_0  \int_{0}^{t \wedge \tau_N}  \mathrm{Y}_n(s) \lVert \mathbf{w}_n(s) \rVert^2_\alpha ds  + C_1  \int_0^{t \wedge \tau_N} R_n(s) ds+ \alpha^{\frac{1}{2}} \lambda^{-1}_{\delta}(\alpha)  \mathscr{M}_n(t\wedge \tau_N),\label{Eq:Conv-Prob-Start1}
\end{align}
where 
\begin{equation*}
\mathscr{M}_n(t)=\int_{0}^{t} ( J_\alpha^{-1} \mathbf{w}_n, \G (\Psi_n) dW), \; t\in [0,T].
\end{equation*}
We now deal with the stochastic term. By using the Burkholder-Davis-Gundy inequality, the Assumption \ref{Assum-Uniqness} and the Young inequality we deduce that for any $\theta>0$ there exist two constant $C_2, c_2>0$ such that for all $n \in \mathbb{N}$
\begin{align}
\lqq{ \alpha^{\frac{1}{2}} \lambda^{-1}_{\delta}(\alpha) \mathbb{E} \sup_{0\le s \le t}\left\lvert \int_{0}^{ s \wedge \tau_N} ( J_\alpha^{-1} \mathbf{w}_n, \G(\Psi_n) dW)  \right\rvert} \notag\\
& = \alpha^{\frac{1}{2}} \lambda^{-1}_{\delta}(\alpha) \mathbb{E} \sup_{0\le s \le t} \left \lvert \int_{0}^{ s \wedge \tau_N} ( \mathbf{w}_n , G(\Psi_n) dW ) \right \rvert \notag \\
& \leq c_2 \alpha^{\frac{1}{2}} \lambda^{-1}_{\delta}(\alpha) \mathbb{E} \left[\int_{0}^{t \wedge \tau_N} \lVert \mathbf{w}_n\rVert^2_\alpha  \Vert G(\Psi_n)\Vert_{\mathscr{L}(\rK, \h)}^2 ds \right]^{\frac{1}{2}} \notag\\
& \leq \theta \mathbb{E} \sup_{s\in [0,t]} \lVert \mathbf{w}_n (s\wedge \tau_N) \rVert^2_\alpha+    C_2 \alpha\lambda^{-2}_{\delta}(\alpha) \mathbb{E} \int_{0}^{t \wedge \tau_N}   (1 + \delta^2|\bu|^2 + \lambda_{\delta}^2(\alpha) |\mathbf{y}_n|^2) ds . \notag
\end{align}  
Using the definition of the stopping time $\tau_N$ yields
\begin{align}
& \alpha^{\frac{1}{2}} \lambda^{-1}_{\delta}(\alpha) \mathbb{E} \sup_{0\le s \le t}\left\lvert \int_{0}^{ s \wedge \tau_N} \langle J_\alpha^{-1} \mathbf{w}_n, \G(\Psi_n) dW \rangle \right\rvert \nonumber \\
& \leq \theta \mathbb{E} \sup_{s\in [0,t]} \lVert \mathbf{w}_n (s\wedge \tau_N) \rVert^2_\alpha+    C_2 \alpha\lambda^{-2}_{\delta}(\alpha) \mathbb{E} \int_{0}^{t \wedge \tau_N} (1 + \delta^2|\bu|^2 + \lambda_{\delta}^2(\alpha) N ) ds .   \label{Eq: Est-ProbConv-6}
\end{align}
Next observe that thanks to the  estimates \eqref{Eq:Est-Det-Control}-\eqref{Eq:Est-Det-NSE}, the fact $\int_0^T \lVert h_n \rVert^2_{\mathrm{K}}\le M $  we see that there exists a deterministic constant $c_3>0$ such that with probability $1$
\begin{equation}
e^{\int_0^{T} C_0 \mathrm{Y}_n(s) ds    } \le e^{(T+c_3)\left(\lambda_\delta^2(\alpha)  + \lvert \lambda_\delta(\alpha)- (1-\delta)\rvert + \lambda_\delta^\frac43(\alpha)  \right)+ \delta^2c_3 (1+ c_3 )M + \alpha^2 \lambda_\delta(\alpha)T. }
\end{equation}
In a similar way, we can show that there exists a deterministic constant $C_3>0$, which may depend on $M$, $N$ and $T$,  such that with probability $1$
\begin{equation}
\int_0^{T\wedge \tau_n} \mathrm{R}_n(s) ds\le C_3 \Sigma_n ,
\end{equation}
where the sequence $\Sigma_n,\; n \in \mathbb{N}$ is defined by
\begin{equation}
\begin{split}
\Sigma_n= &  \lvert \lambda_\delta(\alpha_n)- (1-\delta)\rvert + \alpha_n^2 [ \lambda_\delta^2(\alpha_n)+ (1-\delta)] + \delta^2 \alpha_n^2  + \alpha_n^2 \lambda_\delta^{-1}(\alpha_n) + \alpha_n \lambda_{\delta}^{-2} (\alpha_n) +\alpha_n^2 +\alpha_n 
\end{split}
\end{equation}
Next, since $\lambda_\delta(\alpha_n)\le 1$,   $\lvert \lambda_\delta(\alpha_n)- (1-\delta)\rvert \to 0$, and $\alpha_n^2\lambda_\delta(\alpha_n)\to 0$ as $n \to \infty$, we deduce that there exists a deterministic constant $C_4>0$ such that with probability $1$
\begin{equation}
\sup_{n \in \mathbb{N}}e^{\int_0^{T} C_0 \mathrm{Y}_n(s) ds    } \le  C_4. 
\end{equation}
Thus, by choosing $\theta >0$ so that $2 \theta C_4\le 1$  and applying the version of Gronwall's lemma given in \cite[Lemma A.1]{Chueshov+Millet}
we obtain that for all $t\in [0,T]$ and $n \in \mathbb{N}$
\begin{align*}
\mathbb{E} \sup_{s\in [0,t]}\Vert \mathbf{w}_n (t \wedge \tau_N)\Vert_\alpha^2 +\mathbb{E} \int_{0}^{t \wedge \tau_N}  \left[ \frac12 \left\lvert \rA^{\frac{1}{2}}  \mathbf{w}_n (s)\right\rvert^2  + \frac{\alpha^2}2 \left\lvert \rA   \mathbf{w}_n (s)\right\rvert^2 \right] ds \notag
\le  C_4C_3 \Sigma_n, 
\end{align*}
which implies 
\begin{align*}
\mathbb{E} \sup_{s\in [0,t]}\lvert \mathbf{w}_n (t \wedge \tau_N)\rvert ^2 +\mathbb{E} \int_{0}^{t \wedge \tau_N}  \frac12 \left\lvert \rA^{\frac{1}{2}}  \mathbf{w}_n (s)\right\rvert^2  ds \le  C_4C_3 \Sigma_n.
\end{align*}
	Now, since $\lambda_\delta(\alpha_n)\le 1$,   $\lvert \lambda_\delta(\alpha_n)- (1-\delta)\rvert \to 0$ and $\alpha_n \lambda_\delta^{-\ell}(\alpha_n) \to 0$, $\ell\in \{1,2\}$, as $n \to \infty$ we infer that
\begin{equation}\label{CONV-OMEGA-DA}
\mathbb{E} \sup_{r\in [0,T]}\lvert \mathbf{w}_n(r\wedge \tau_N )\rvert^2 + \mathbb{E} \int_0^{T\wedge \tau_N }\lvert \rA^\frac12 \mathbf{w}_n(s)\rvert^2 \, ds \to 0 \text{ as } n\to \infty. 
\end{equation}

Next, 
let $\gamma>0$ and $\eps>0$ be arbitrary numbers. Let us set 
\begin{equation*}
X_n(T) =\sup_{r\in [0,T]}\lvert \mathbf{w}_n(r)\rvert^2 + \int_0^T \lvert \rA^\frac12 \mathbf{w}_n(s)\rvert^2 \, ds.
\end{equation*}
Then,  it is not difficult to check that 
\begin{align}
\mathbb{P}\left(X_n(T) \ge \eps\right)\le& \mathbb{P}(\sup_{r\in[0,T]}\lvert X_n(T) , \tau_N=T )+ \mathbb{P}(\sup_{r\in [0,T]}\lvert \mathbf{y}_n(r)\rvert^2\ge N )\nonumber \\
& \le \frac1\eps \mathbb{E} X_n(T\wedge \tau_N) + \frac1N \mathbb{E} \sup_{r\in [0,T]}(\lvert \mathbf{y}_n(r)\rvert ). \label{EST-CONV-PROB-DA}
\end{align}
Owing to estimate \eqref{Estim} one can find $N_0>0$ such that if $N\ge N_0$ then  $$\frac1N \mathbb{E} \sup_{r\in [0,T]}(\lvert \mathbf{y}_n(r)\rvert^2 )< \frac{\gamma}{2}.$$ Thus, thanks to \eqref{CONV-OMEGA-DA} and \eqref{EST-CONV-PROB-DA} we infer that there exists $n_0\in \mathbb{N}$ such that for all $n\ge n_0$ 
\begin{equation*}
\mathbb{P}\left(\left[\sup_{r\in [0,T]}\lvert \mathbf{w}_n(r )\rvert^2 + \int_0^{T}\lvert \rA^\frac12 \mathbf{w}_n(s)\rvert^2 \, ds\right] \ge \eps \right)<\gamma,
\end{equation*}
which completes the proof of Proposition \ref{Prop:Conv-In-Proba}.
\end{proof}
We will also need the following result.
\begin{Lem}\label{Lem-Prop-Imp-2}
		Let $M>0$, $(h_n)_{n \in \mathbb{N}}\subset \mathscr{A}_M$, $h\in \mathscr{A}_M$, and $(\alpha_n)_{n \in \mathbb{N}}\subset (0,1]$ be a sequence converging to 0. Also, let $\delta \in \{0,1\}$ and $\xi \in D(\rA^\frac12)$. Let us assume that Assumption \ref{Assum-Uniqness} holds. Let $h_n$ be  a sequence converging in distribution to $h$ as $\mathcal{A}_M$-valued random variable. 
		
		Then,  the process $\Gamma^{0,\delta}_\xi \left(\int_0^{\cdot} h_n(r)dr \right)$ converges in distribution to $\Gamma^{0,\delta}_\xi \left(\int_0^{\cdot} h(r)dr \right)$ as $C([0,T];\h)\cap \el^2(0,T;D(\rA^{\frac 12}))$-valued random variables.
\end{Lem}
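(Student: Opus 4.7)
My plan is to deduce this convergence in distribution from the pathwise continuity statement already contained in Theorem \ref{THM:LDP1}, via an application of the continuous mapping theorem. Consider the solution map
\[
F : \mathcal{A}_M \longrightarrow C([0,T];\h)\cap \el^2(0,T;D(\rA^{1/2})), \qquad F(h) := \Gamma^{0,\delta}_\xi\Bigl(\int_0^{\cdot} h(r)\,dr\Bigr),
\]
where $\mathcal{A}_M$ is endowed with the weak topology induced by the metric $d_1$ from \eqref{Eq:Metric-SM}. It is well known that $(\mathcal{A}_M, d_1)$ is a compact Polish space. Theorem \ref{THM:LDP1}, together with Remark \ref{Rem-Borel-Meas}, asserts that for any sequence $(h_n)\subset \mathcal{A}_M$ converging weakly to $h \in \mathcal{A}_M$, one has $F(h_n)\to F(h)$ strongly in the target space. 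Hence $F$ is a continuous map between Polish spaces, and in particular it is Borel measurable.

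With continuity of $F$ in hand, the rest of the argument is a standard bookkeeping step. Since $h_n$ converges in distribution to $h$ as $\mathcal{A}_M$-valued random variables, the continuous mapping theorem applied to the continuous map $F$ yields at once
\[
F(h_n) = \Gamma^{0,\delta}_\xi\Bigl(\int_0^{\cdot} h_n(r)\,dr\Bigr) \xrightarrow{d} \Gamma^{0,\delta}_\xi\Bigl(\int_0^{\cdot} h(r)\,dr\Bigr) = F(h)
\]
in $C([0,T];\h)\cap \el^2(0,T;D(\rA^{1/2}))$, which is exactly the stated claim. An equivalent route would be to invoke Skorohod's representation theorem to realise $h_n$ and $h$ on a common auxiliary probability space as $\tilde h_n,\tilde h$ with $\tilde h_n\to \tilde h$ almost surely in $(\mathcal{A}_M,d_1)$, then apply the continuity of $F$ pathwise to obtain $F(\tilde h_n)\to F(\tilde h)$ almost surely, and finally use that almost sure convergence implies convergence in distribution together with equality of laws.

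The main obstacle in this chain of reasoning has already been resolved: it is precisely the analysis carried out in the proof of Theorem \ref{THM:LDP1}, where uniform a priori bounds for the deterministic controlled equation \eqref{DCNS-alpha-delta} (Theorem \ref{THM:DCNS-alpha-delta}), combined with Aubin--Lions compactness and the passage to the limit in the nonlinear and control terms $B(\mathbf{y}_n,\mathbf{y}_n)$, $B(\bu,\mathbf{y}_n)+B(\mathbf{y}_n,\bu)$ and $G(\delta \bu + (1-\delta)\mathbf{y}_n) h_n$, yield strong convergence of $F(h_n)$ to $F(h)$. Beyond that, the present lemma adds no new PDE or probabilistic input: it is purely an application of the continuous mapping theorem to a continuous deterministic map. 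Note also that the Wiener process $W$ plays no role here, since $F$ depends only on the control $h$ and not on $W$; this is one of the simplifying features of the Budhiraja--Dupuis weak convergence approach that we are exploiting.
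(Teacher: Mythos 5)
Your proposal is correct and rests on exactly the same key ingredient as the paper's proof, namely the sequential continuity of the map $h \mapsto \Gamma^{0,\delta}_\xi\left(\int_0^{\cdot} h(r)\,dr\right)$ on $(\mathcal{A}_M, d_1)$ established in Theorem \ref{THM:LDP1}; the paper simply packages the final step via Skorokhod's representation theorem (almost-sure realization, pathwise application of the continuity, then equality of laws), which is precisely the ``equivalent route'' you describe at the end. Invoking the continuous mapping theorem directly, as you do, is an entirely legitimate shortcut and involves no gap.
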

\begin{proof}[Proof of Lemma \ref{Lem-Prop-Imp-2}]
	Before diving into the depth of the proof we recall that $\mathcal{A}_M$ is a Polish space when endowed with the metric defined in \eqref{Eq:Metric-SM}. Now, since, by assumption, $h_n \to h$ in law as $\mathcal{A}_M$-valued random variables, we can infer from the Skorokhod's theorem that one can find a probability space $(\bar{\Omega}, \bar{\mathscr{F}},\bar{\mathbb{P}})$ on which there exist $\mathcal{A}_M$-valued random variables $\bar{h}_n$, $\bar{h}$ having the same laws as $h_n$ and $h$, respectively, and satisfying 
	\begin{equation}
	\bar{h}_n \to h \text{ in }  \mathcal{A}_M, \bar{\mathbb{P}}-\text{a.s.}. 
	\end{equation} 
	From the last property and Theorem \ref{THM:LDP1}   we derive that 
	\begin{equation}\label{CONV-COTW}
	\Gamma^{0,\delta}_\xi \left(\int_0^{\cdot} \bar{h}_n(r)dr \right)  \to \Gamma^{0,\delta}_\xi \left(\int_0^{\cdot} \bar{h}(r)dr \right) \text{ in } C([0,T];\h)\cap \el^2(0,T; D(\rA^\frac12)) \;\; \bar{\mathbb{P}}-\text{a.s.}.
	\end{equation}
	Observe that  Theorem \ref{THM:LDP1}  implies in particular that  $\Gamma^{0,\delta}_\xi:  \mathcal{A}_M \to C([0,T];\h)\cap \el^2(0,T; D(\rA^\frac12))$ is continuous. Hence, from the equality of the laws of $h_n$ (resp. $h$) and $\bar{h}_n$ (resp. $\bar{h}$) we infer that  the laws of $\Gamma^{0,\delta}_\xi \left(\int_0^{\cdot} \bar{h}_n(r)dr \right)$  and $\Gamma^{0,\delta}_\xi \left(\int_0^{\cdot} \bar{h}(r)dr \right) $ are equal to the laws of $\Gamma^{0,\delta}_\xi \left(\int_0^{\cdot} {h}_n(r)dr \right)$ and $\Gamma^{0,\delta}_\xi \left(\int_0^{\cdot} {h}(r)dr \right) $, respectively. This observation and the convergence \eqref{CONV-COTW}  complete the proof of Lemma \ref{Lem-Prop-Imp-2}.  
\end{proof}
The next result that we need is contained in the following theorem.
\begin{Thm}\label{Prop-Imp-2}
	Let $M>0$, $(h_n)_{n \in \mathbb{N}}\subset \mathscr{A}_M$, $h\in \mathscr{A}_M$, and $(\alpha_n)_{n \in \mathbb{N}}\subset (0,1]$ be a sequence converging to 0. Also, let $\delta \in \{0,1\}$ and $\xi \in D(\rA^\frac12)$. \newline
	If Assumption \ref{Assum-Uniqness} holds and $h_n$ is a sequence converging in distribution to $h$ as $\mathcal{A}_M$-valued random variable, then the process $\Gamma^{\alpha_n,\delta}_\xi \biggl(W + \alpha_n^{-\frac12} \lambda_\delta(\alpha_n )\int_0^{\cdot} h_n(r)dr\biggr) $ converges in distribution to $\Gamma^{0,\delta}_\xi \left(\int_0^{\cdot} h(r)dr  \right)$ as $C([0,T];\h)\cap \el^2(0,T;D(\rA^{\frac12}))$-valued random variables.
\end{Thm}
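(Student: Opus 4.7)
The plan is to reduce Theorem \ref{Prop-Imp-2} to the two preparatory results, Lemma \ref{Lem-Prop-Imp-1} and Lemma \ref{Lem-Prop-Imp-2}, via a triangle-type decomposition followed by a Slutsky-type argument in the Polish space $\mathscr{X}:=C([0,T];\h)\cap \el^2(0,T;D(\rA^{\frac12}))$. The intermediate object is the deterministic controlled solution driven by the same control $h_n$: it is close in probability to the stochastic controlled solution when $\alpha_n\to 0$ (by Lemma \ref{Lem-Prop-Imp-1}) and close in distribution to the deterministic controlled solution driven by $h$ (by Lemma \ref{Lem-Prop-Imp-2}). More precisely, setting $\tilde{W}_n := W + \alpha_n^{-\frac12}\lambda_\delta(\alpha_n)\int_0^{\cdot} h_n(r)dr$, I would write
\begin{equation*}
\Gamma^{\alpha_n,\delta}_\xi(\tilde{W}_n) = R_n + X_n,
\end{equation*}
with
\begin{equation*}
R_n := \Gamma^{\alpha_n,\delta}_\xi(\tilde{W}_n) - \Gamma^{0,\delta}_\xi\!\left(\int_0^{\cdot}h_n(r)dr\right), \qquad X_n := \Gamma^{0,\delta}_\xi\!\left(\int_0^{\cdot}h_n(r)dr\right),
\end{equation*}
so that by Lemma \ref{Lem-Prop-Imp-1}, $R_n\to 0$ in probability in $\mathscr{X}$, and by Lemma \ref{Lem-Prop-Imp-2}, $X_n$ converges in distribution to $X := \Gamma^{0,\delta}_\xi\!\left(\int_0^{\cdot}h(r)dr\right)$ in $\mathscr{X}$.

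The final step would be to invoke the Slutsky-type result for random variables with values in a Polish metric space: if $R_n\to 0$ in probability and $X_n\to X$ in distribution in the same Polish space, then $R_n + X_n\to X$ in distribution. This follows, for instance, from the Portmanteau theorem applied to bounded Lipschitz test functionals on $\mathscr{X}$, combined with the elementary observation that for any such functional $F$ one has $|F(R_n+X_n)-F(X_n)|\to 0$ in probability, so that $\mathbb{E}F(R_n+X_n)$ and $\mathbb{E}F(X_n)$ share the same limit. The desired conclusion of Theorem \ref{Prop-Imp-2} is then immediate.

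Since both supporting lemmata are already at hand, the argument contains no substantial obstacle: all of the analytical work (uniform bounds for the stochastic controlled system, identification of limits, stability of the deterministic controlled equation) has been absorbed into Lemmata \ref{Lem-Prop-Imp-1} and \ref{Lem-Prop-Imp-2}. The only point worth flagging is that the solution maps $\Gamma^{\alpha_n,\delta}_\xi$ and $\Gamma^{0,\delta}_\xi$ are Borel measurable, by Remarks \ref{Rem-Exist-G2} and \ref{Rem-Borel-Meas} respectively, so that the laws of the random variables under consideration are unambiguously defined.
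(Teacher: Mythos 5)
Your proposal is correct and is essentially the paper's own argument: the paper deduces the theorem from Lemmata \ref{Lem-Prop-Imp-1} and \ref{Lem-Prop-Imp-2} together with \cite[Theorem 11.3.3]{Dudley}, which is precisely the converging-together (Slutsky-type) lemma for Polish-space-valued random variables that you invoke and sketch via bounded Lipschitz test functionals. No substantive difference.
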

\begin{proof}
	Theorem \ref{Prop-Imp-2} readily follows from \cite[Theorem 11.3.3]{Dudley}, Lemmata \ref{Lem-Prop-Imp-1} and \ref{Lem-Prop-Imp-2}.
\end{proof}
\subsection{Proof of Theorem \ref{Main-Theorem-Delta} }\label{Sec:Proof-of-Main-Thm}
In this subsection we will give the proof of our main results which are contained in Theorem \ref{Main-Theorem-Delta}. The proof relies on  a LDP result which follows from \cite[Theorem 3.6 and Theorem 4.4]{BuDu}. We first recall this LDP results. 

 Let $\mathrm{K}$, $\mathrm{K}_1$ be two separable Hilbert spaces and $W$ a Wiener process as in Subsection \ref{Assum-Noise}. We recall that $\mathscr{A}$ is the set of all $\mathrm{K}$-valued predictable process $h$ such that 
\begin{equation}
\mathbb{P}\left(\int_0^T \lVert h(r)\rVert^2_{\mathrm{K}} dr <\infty \right)=1.
\end{equation}
We recall the following result which is exactly \cite[Theorem 3.6]{BuDu}. 
\begin{Thm}\label{BD-Theorem-3.6}
	Let $\Gamma: C([0,T];\mathrm{K})\to \mathbb{R}$ be a bounded, Borel measurable function. Then
	\begin{equation}
	-\log \mathbb{E}e^{-\Gamma(W)}=\inf_{h \in \mathscr{A}} \mathbb{E} \biggl\{\frac12 \int_0^T \lVert h(r)\rVert^2_{\mathrm{K}} + \Gamma\left(W+ \int_0^{\cdot}h(r)dr\right ) \biggr\}.
	\end{equation}
\end{Thm}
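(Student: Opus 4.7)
The plan is to approach this as the Budhiraja--Dupuis variational (Laplace) representation of the exponential functional $e^{-\Gamma(W)}$ on Wiener space, splitting the identity into the two inequalities $\leq$ and $\geq$ and tackling each via Girsanov's theorem, combined respectively with Jensen's inequality and the martingale representation theorem.

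For the upper bound, I first fix $h \in \mathscr{A}$ satisfying Novikov's condition $\mathbb{E}\exp(\tfrac12\int_0^T\|h(r)\|_{\mathrm{K}}^2\,dr)<\infty$; the general case will be handled by a truncation afterwards. Setting
\[
Z_T := \exp\Bigl(-\int_0^T h(r)\,dW(r) - \tfrac{1}{2}\int_0^T\|h(r)\|_{\mathrm{K}}^2\,dr\Bigr),
\]
the measure $\mathbb{Q} := Z_T\,\mathbb{P}$ is a probability, and Girsanov's theorem tells me that $\tilde{W} := W + \int_0^{\cdot} h(r)\,dr$ is a cylindrical $\mathbb{Q}$-Wiener process. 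Consequently
\[
\mathbb{E}_{\mathbb{P}}\bigl[e^{-\Gamma(W)}\bigr] = \mathbb{E}_{\mathbb{Q}}\bigl[e^{-\Gamma(\tilde{W})}\bigr] = \mathbb{E}_{\mathbb{P}}\Bigl[e^{-\Gamma(W+\int_0^{\cdot} h\,dr)}\,Z_T\Bigr].
\]
Taking $-\log$ on both sides and applying Jensen's inequality (since $-\log$ is convex) yields
\[
-\log\mathbb{E}_{\mathbb{P}}\bigl[e^{-\Gamma(W)}\bigr] \leq \mathbb{E}_{\mathbb{P}}\Bigl[\Gamma\bigl(W+\textstyle\int_0^{\cdot} h\,dr\bigr) + \int_0^T h\,dW + \tfrac{1}{2}\int_0^T\|h\|_{\mathrm{K}}^2\,dr\Bigr],
\]
and the stochastic-integral term has mean zero because of the Novikov integrability; taking the infimum over $h$ closes this direction.

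For the lower bound, I need to produce a near-optimal control. The idea is to consider the strictly positive $\mathbb{F}$-martingale $M_t := \mathbb{E}[e^{-\Gamma(W)}\mid\mathscr{F}_t]$, which is bounded above by $1$ and below by $e^{-\|\Gamma\|_\infty}>0$ since $\Gamma$ is bounded. The martingale representation theorem on the cylindrical Wiener filtration provides a predictable $\mathrm{K}$-valued process $\psi$ with $M_t = M_0 + \int_0^t \psi\,dW$. Setting $h^{*}_r := -\psi_r/M_r$ and applying It\^o's formula to $V_t := -\log M_t$ gives
\[
dV_t = h^{*}_t\,dW_t + \tfrac{1}{2}\|h^{*}_t\|_{\mathrm{K}}^2\,dt.
\]
Integrating to $T$ with $V_T = \Gamma(W)$ and $V_0 = -\log\mathbb{E}\,e^{-\Gamma(W)}$, and then passing to the Girsanov measure associated with $h^{*}$ (which turns $W$ into $\tilde{W} - \int_0^{\cdot} h^{*}\,dr$ for a new Brownian motion $\tilde{W}$) converts the stochastic term into a deterministic drift and identifies the right-hand side of the claimed identity, evaluated at $h^{*}$, with $-\log\mathbb{E}\,e^{-\Gamma(W)}$. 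This yields the reverse inequality.

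The main obstacle is verifying that $h^{*}$ is admissible and that the final rearrangement is rigorous. Since $\Gamma$ is bounded, $M$ stays uniformly bounded away from $0$ and $\infty$, so on a localizing sequence of stopping times $h^{*}$ has the needed square-integrability; a truncation $h^{*}\mathbf{1}_{\{\|h^{*}\|_{\mathrm{K}}\leq N\}}$ together with monotone/dominated convergence, and finally taking an outer infimum, removes the localization and upgrades the argument from Novikov-type $h$ to arbitrary $h\in\mathscr{A}$. This localization step is the most delicate part and is the technical heart of \cite{BuDu}.
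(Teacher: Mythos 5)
First, a point of comparison: the paper does not prove this statement at all --- it is quoted verbatim as \cite[Theorem 3.6]{BuDu} and used as a black box --- so your sketch can only be measured against the Bou\'e--Dupuis/Budhiraja--Dupuis argument itself. Your upper bound is essentially sound: Girsanov plus Jensen gives the inequality for every $h$ satisfying Novikov's condition; the case $\mathbb{E}\int_0^T\lVert h\rVert_{\mathrm{K}}^2\,dr=\infty$ is vacuous because $\Gamma$ is bounded; and for the remaining $h$ the stopped controls $h\mathbf{1}_{[0,\tau_N]}$, $\tau_N=\inf\{t:\int_0^t\lVert h\rVert_{\mathrm{K}}^2\,dr\ge N\}$, coincide with $h$ pathwise for $N$ large, so bounded convergence applies even though $\Gamma$ is only Borel measurable.

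The genuine gap is in the lower bound, and it is not where you locate it. With $M_t=\mathbb{E}[e^{-\Gamma(W)}\mid\mathscr{F}_t]$ and $dM=\psi\,dW$, the bounds $e^{-\lVert\Gamma\rVert_\infty}\le M\le e^{\lVert\Gamma\rVert_\infty}$ together with $\mathbb{E}\int_0^T\lVert\psi\rVert_{\mathrm{K}}^2\,dr=\mathbb{E}M_T^2-M_0^2<\infty$ already give $\mathbb{E}\int_0^T\lVert h^*\rVert_{\mathrm{K}}^2\,dr<\infty$, so admissibility and localization of $h^*$ are not the delicate issue. The real problem is that your Girsanov step produces the identity
\begin{equation*}
-\log \mathbb{E}\,e^{-\Gamma(W)}=\mathbb{E}_{\mathbb{Q}}\Bigl[\Gamma\bigl(\tilde{W}+\textstyle\int_0^{\cdot}u\,dr\bigr)+\tfrac12\textstyle\int_0^T\lVert u\rVert_{\mathrm{K}}^2\,dr\Bigr],\qquad u=\psi/M=-h^*,
\end{equation*}
on the \emph{tilted} space: $\tilde{W}=W+\int_0^{\cdot}h^*\,dr$ is a Wiener process only under $\mathbb{Q}=(M_T/M_0)\,\mathbb{P}$, whereas the infimum in the statement is taken over $h\in\mathscr{A}$ with the expectation under $\mathbb{P}$ and the original $W$. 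Showing that the value of the control problem is unchanged when $(\mathbb{P},W)$ is replaced by $(\mathbb{Q},\tilde{W})$ --- equivalently, transporting $u$ back to an admissible control on the original space at an $\varepsilon$-loss, which \cite{BuDu} accomplish by approximating with simple piecewise-constant controls --- is the actual technical heart of the proof, and your sketch passes over it silently. (There is also a sign slip: the control achieving the value is $u=\psi/M=-h^*$, not $h^*$ itself, though the energy term is unaffected.)
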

Next, let $\mathcal{E}$ be a Polish space, $(\Psi^\eps)_{\eps\in (0,1]}$ a family of Borel measurable maps from $C([0,T];\mathrm{K})$ onto $\mathcal{E}$, and $(X^\eps)_{\eps\in (0,1]}$ a family of $\mathcal{E}$-valued random variables.We have the following result which can be proved by using Theorem \ref{BD-Theorem-3.6} and the idea in the proof of \cite[Theorem 4.4]{BuDu}.
\begin{Thm}\label{BD-Theorem-4.4}
	Let $\varrho$ be a real-valued function defined on $(0,\infty)$ such that $$\varrho(\eps) \to \infty \text{ as } \eps \to 0 .$$
	Assume that there exists a Borel measurable map $\Psi^0: C([0,T];\mathrm{K}) \to \mathcal{E}$ such that the following hold:
	\begin{enumerate}[label=(\textbf{A}\arabic{*})]
		\item \label{BD-Assum1}  if $(h_\eps)_{\eps \in (0,1]}\subset \mathscr{A}_M$, $M>0$, converges in distribution to $h\in \mathscr{S}_M$ as $\mathscr{A}_M$-valued random variables, then   $\Psi^\eps(W +  \varrho(\eps) \int_0^{\cdot} h_\eps(r)dr )$ converges in distribution to $\Psi^0(\int_0^{\cdot} h(r)dr)$.
		\item \label{BD-Assum2} For every $M>0$ the set $K_M= \{\Psi^0(\int_0^{\cdot}h(r) dr): h \in \mathscr{A}_M\}$ is a compact subset of $\mathcal{E}$.
	\end{enumerate}
	Then, the family $(X^\eps)_{\eps \in (0,1]}$ satisfies an LDP with speed $\varrho^2(\eps)$ and rate function $I$ given by 
	\begin{equation}
	I(x)= \inf_{\{ h \in \el^2(0,T;\mathrm{K}):\;\; x= \Psi^0(\int_0^{\cdot}h(r)dr )\} } \biggl\{\frac12 \int_0^T \lVert h(r)\rVert^2_{\mathrm{K}}  \biggr \}.
	\end{equation}
\end{Thm}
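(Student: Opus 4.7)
The plan is to reduce the Laplace principle (which is equivalent to an LDP once we have a good rate function) to the two hypotheses \ref{BD-Assum1}--\ref{BD-Assum2} via the Budhiraja--Dupuis variational formula of Theorem \ref{BD-Theorem-3.6}. First I would observe that the candidate rate function $I$ is automatically lower semicontinuous and has compact level sets: for $a\ge 0$, the set $\{x\in \mathcal{E}: I(x)\le a\}$ is precisely $\{\Psi^0(\int_0^{\cdot}h(r)dr):h\in \mathcal{A}_{2a}\}$ (since $\mathcal{A}_{2a}$ is weakly closed and $I$ is the value function of a quadratic cost), which equals $K_{2a}$ and is compact by \ref{BD-Assum2}. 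Hence $I$ is a good rate function, and it suffices to verify the Laplace principle, namely that for every bounded continuous $f:\mathcal{E}\to\mathbb{R}$,
\begin{equation*}
\lim_{\eps\to 0} -\frac{1}{\varrho^2(\eps)}\log \mathbb{E}\exp\bigl(-\varrho^2(\eps) f(X^\eps)\bigr) \;=\; \inf_{x\in \mathcal{E}}\bigl\{I(x)+f(x)\bigr\}.
\end{equation*}

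Next I would apply Theorem \ref{BD-Theorem-3.6} to $\Gamma(\cdot)=\varrho^2(\eps) f(\Psi^\eps(\cdot))$. After the change of variable $h=\varrho(\eps)\tilde h$, which carries $\mathscr{A}$ onto itself and rescales the quadratic cost by $\varrho^2(\eps)$, the variational representation becomes
\begin{equation*}
-\frac{1}{\varrho^2(\eps)}\log \mathbb{E} e^{-\varrho^2(\eps) f(\Psi^\eps(W))} \;=\; \inf_{\tilde h\in \mathscr{A}} \mathbb{E}\Bigl\{\tfrac12\!\int_0^T\!\lVert \tilde h(r)\rVert_{\mathrm{K}}^2 dr + f\bigl(\Psi^\eps(W+\varrho(\eps)\!\int_0^{\cdot}\!\tilde h(r)dr)\bigr)\Bigr\}.
\end{equation*}
For the Laplace upper bound I would use that near-optimizers of the right-hand side can be restricted a priori to some $\mathscr{A}_M$, with $M$ depending only on $\lVert f\rVert_\infty$ (because choosing $\tilde h\equiv 0$ gives the trivial bound $\lVert f\rVert_\infty$). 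For each $\eps$, I select $\tilde h_\eps\in\mathscr{A}_M$ that is $\eps$-nearly optimal. Since $\mathcal{A}_M$ endowed with the metric $d_1$ is a compact Polish space, $(\tilde h_\eps)$ is tight and, along a subsequence, converges in distribution to some $\mathcal{A}_M$-valued random element $h$. Hypothesis \ref{BD-Assum1} then yields convergence in distribution of $\Psi^\eps(W+\varrho(\eps)\int_0^{\cdot}\tilde h_\eps(r)dr)$ to $\Psi^0(\int_0^{\cdot} h(r)dr)$, and lower semicontinuity of the quadratic cost under weak convergence (Fatou) delivers
\begin{equation*}
\limsup_{\eps\to 0}\Bigl(-\frac{1}{\varrho^2(\eps)}\log \mathbb{E} e^{-\varrho^2(\eps) f(\Psi^\eps(W))}\Bigr) \le \mathbb{E}\Bigl\{\tfrac12\!\int_0^T\!\lVert h(r)\rVert_{\mathrm{K}}^2 dr + f\bigl(\Psi^0(\int_0^{\cdot} h(r)dr)\bigr)\Bigr\}\le \inf_{x\in\mathcal{E}}\{I(x)+f(x)\},
\end{equation*}
the last step using the definition of $I$ after reducing to a deterministic $h\in L^2$ by a measurable-selection / disintegration argument.

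For the Laplace lower bound, for any $h\in L^2(0,T;\mathrm{K})$ with finite cost I plug the deterministic control $\tilde h_\eps\equiv h\in \mathscr{A}_{\lVert h\rVert_{L^2}^2}$ into the variational representation; trivially $\tilde h_\eps\to h$ in distribution on $\mathcal{A}_M$, so \ref{BD-Assum1} again gives $\Psi^\eps(W+\varrho(\eps)\int_0^{\cdot}h(r)dr)\to \Psi^0(\int_0^{\cdot}h(r)dr)$ in law; bounded convergence yields
\begin{equation*}
\liminf_{\eps\to 0}\Bigl(-\frac{1}{\varrho^2(\eps)}\log \mathbb{E} e^{-\varrho^2(\eps) f(\Psi^\eps(W))}\Bigr) \ge \tfrac12\!\int_0^T\!\lVert h(r)\rVert_{\mathrm{K}}^2 dr + f\bigl(\Psi^0(\int_0^{\cdot}h(r)dr)\bigr),
\end{equation*}
and taking the infimum over all such $h$ gives $\liminf \ge \inf_{x}\{I(x)+f(x)\}$. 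Matching the two bounds proves the Laplace principle; combined with goodness of $I$ this yields the LDP at speed $\varrho^2(\eps)$ in the usual way (Varadhan/Bryc equivalence).

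The main obstacle I anticipate is the upper bound: honest near-minimizers $\tilde h_\eps$ are random elements of $\mathscr{A}_M$, and passing to the limit requires combining (i) compactness of $\mathcal{A}_M$ under $d_1$ to extract a weak-distributional subsequential limit, (ii) an appropriate Skorokhod representation so that \ref{BD-Assum1} can be invoked pathwise to identify the limit of $\Psi^\eps(W+\varrho(\eps)\int_0^{\cdot}\tilde h_\eps(r)dr)$, and (iii) Fatou on the quadratic cost functional, which is weakly lower semicontinuous but not continuous. Some care is also needed to reduce from random $h$ to deterministic $h$ when recognising the infimum defining $I$; this is handled by Jensen's inequality applied to the convex map $h\mapsto \tfrac12 \lVert h\rVert_{L^2}^2+ f(\Psi^0(\int_0^{\cdot}h))$ if $f\circ \Psi^0$ were convex, but in general by passing through the level-set description of $I$ and noting that the infimum over random $h\in\mathscr{A}_M$ coincides with the deterministic infimum because $\Psi^0$ depends only on the path $\int_0^{\cdot}h(r)dr$ and the cost is law-determined.
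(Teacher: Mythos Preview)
The paper does not actually prove this theorem: it simply states that the result ``can be proved by using Theorem \ref{BD-Theorem-3.6} and the idea in the proof of \cite[Theorem 4.4]{BuDu}.'' Your proposal is a faithful reconstruction of the Budhiraja--Dupuis argument (variational formula, rescaling $h=\varrho(\eps)\tilde h$, compactness of $\mathcal{A}_M$ to extract subsequential limits, Fatou for the quadratic cost, goodness of $I$ from \ref{BD-Assum2}), so in spirit you are doing exactly what the paper points to.

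There is, however, a genuine slip: the inequality directions in your two bounds are reversed. The near-optimizer argument yields the \emph{lower} estimate on $-\varrho(\eps)^{-2}\log\mathbb{E}e^{-\varrho^2(\eps)f}$, not the upper one. Indeed, if $\tilde h_\eps$ is $\eps$-nearly optimal then the variational value satisfies $\text{LHS}(\eps)\ge \mathbb{E}\{\tfrac12\|\tilde h_\eps\|^2+f(\Psi^\eps(\cdots))\}-\eps$; weak lower semicontinuity of the cost and \ref{BD-Assum1} then give $\liminf_\eps \text{LHS}(\eps)\ge \mathbb{E}\{\tfrac12\|h\|^2+f(\Psi^0(\int_0^\cdot h))\}\ge \inf_x\{I(x)+f(x)\}$. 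Conversely, plugging a fixed deterministic $h$ gives $\text{LHS}(\eps)\le \tfrac12\|h\|^2+\mathbb{E}f(\Psi^\eps(\cdots))$, hence $\limsup_\eps\text{LHS}(\eps)\le \tfrac12\|h\|^2+f(\Psi^0(\int_0^\cdot h))$, and optimizing over $h$ gives the matching upper estimate. Once you swap the roles of $\limsup/\liminf$ and $\le/\ge$ in your two displays, the argument is correct and complete. Your remark about reducing from random to deterministic $h$ in the first bound is also unnecessary: for each $\omega$ the quantity $\tfrac12\|h(\omega)\|^2+f(\Psi^0(\int_0^\cdot h(\omega)))$ already dominates $\inf_x\{I(x)+f(x)\}$ pointwise, so taking expectation preserves the inequality without any selection or convexity argument.
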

Now, we are ready to  give the promised proof of our main theorem.
\begin{proof}[Proof of Theorem \ref{Main-Theorem-Delta}]
	Owing to Theorems \ref{THM:LDP1}  and  \ref{Prop-Imp-2} the assumptions \ref{BD-Assum1} and \ref{BD-Assum2} of Theorem \ref{BD-Theorem-4.4} are satisfied on $\mathcal{E}=C([0,T];\h)\cap \el^2(0,T;D(\rA^{\frac 12}))$. Thus, we infer that for $\delta\in \{0,1\}$ the solution $\bu^{\alpha,\delta}$ to \eqref{Eq: ABS-SVE} satisfies an LDP on $\mathcal{E}$ with speed $\alpha^{-1}\lambda^2_\delta(\alpha)$ and rate function $I_\delta$. This completes the proof of Theorem \ref{Main-Theorem-Delta}. 
\end{proof}

\end{document}